\theoremstyle{plain}
\newtheorem{thm}{Theorem}
\newtheorem{defn}{Definition}
\newtheorem{cor}{Corollary}
\newtheorem{lem}{Lemma}
\newtheorem{rem}{Remark}
\newtheorem{prop}{Proposition}
\numberwithin{equation}{section}
\newcommand{\dmn}{\mathop{\rm dom}}
\newcommand{\rnk}{\mathop{\rm rank}}
\renewcommand{\Im}{\mathop{\rm Im}}
\newcommand{\supp}{\mathop{\rm supp}}
\renewcommand{\kappa}{\varkappa}
\newcommand{\Real}{\mathbb R}
\newcommand{\Comp}{\mathbb C}
\newcommand{\eps}{\varepsilon}
\newcommand{\cK}{\mathcal{K}}
\newcommand{\cH}{\mathcal{H}}
\newcommand{\cG}{\mathcal{G}}
\newcommand{\cV}{\mathcal{V}}
\newcommand{\cW}{\mathcal{W}}
\newcommand{\cD}{\mathcal{D}}
\newcommand{\cR}{\mathcal{R}}
\newcommand\xe{\left(\tfrac x\eps\right)}
\newcommand\xep{(\eps^{-1}\cdot)}
\newcommand\qdr[1]{{#1}^{[1]}(a)}
\newcommand\qdrz[1]{{#1}^{[1]}(0)}
\newcommand{\ra}{\rangle}
\newcommand{\la}{\langle}
\begin{document}

\title[Quantum graphs: Coulomb-type potentials]
{Quantum graphs: Coulomb-type potentials and exactly solvable models}

\author{Yuriy Golovaty}%
\address{Department of Mechanics and Mathematics,
  Ivan Franko National University of Lviv\\
  1 Universytetska str., 79000 Lviv, Ukraine}
\curraddr{}
\email{yuriy.golovaty@lnu.edu.ua}

\subjclass[2000]{Primary 34L40, 81Q35; Secondary 34E10, 81Q10}

\begin{abstract}
We study the Schr\"{o}dinger operators on a non-compact star graph with the Cou\-lomb-type potentials having singularities at the vertex. The convergence of regularized Hamiltonians $H_\eps$ with cut-off Coulomb potentials coupled with $(\alpha \delta+\beta\delta')$-like ones is investigated.
The $1$D Coulomb potential and the $\delta'$-potential are very sensitive to their regularization method. The conditions of the norm resolvent convergence of $H_\eps$ depending on the regularization are established.   The limit Hamiltonians give the Schr\"{o}dinger operators with the Coulomb-type potentials a ma\-the\-matically precise meaning, ensuring the correct choice of vertex conditions. We also describe all self-adjoint realizations of the formal Coulomb Hamiltonians on the star graph.
\end{abstract}

\keywords{Schr\"{o}dinger operator, Coulomb potential, $\delta'$-potential, quantum graph, vertex coupling condition,  solvable model, point interaction}
\maketitle

%\tableofcontents
%%%%%%%%%%%%%%%%%%%%%%%%%%%%%%%%%%%%%%%%%%%%%%%%%%%%
% Introduction
%%%%%%%%%%%%%%%%%%%%%%%%%%%%%%%%%%%%%%%%%%%%%%%%%%%%

\section{Introduction}
In recent decades, the theory of differential operators on metric graphs has been the subject of systematic and extensive study, in particular due to numerous possible applications in solid-state physics and engineering. However, from the point of view of physics, the most exciting application of this theory is quantum graphs.
Quantum dynamics usually exhibits high complexity, especially  propagation in ra\-mi\-ﬁed structures. Quantum graphs provide us with quite effective mathematical models with which we can study the quantum systems in a framework that makes explicit solutions possible. There is extensive  literature on quantum graphs; we refer the reader to \cite{Kuchment2002, BerkolaikoCarlson2006, ExnerKeating2008} and the bibliography therein.

A quantum graph is a metric graph equipped with  Schr\"{o}dinger differential expressions  acting on edges and some conditions at vertices (see, e.g., \cite{BerkolaikoKuchmentBook}).  For quantum graphs, there is a wide variety of conditions coupling the wave functions at vertices, in contrast to quantum systems with point interactions on the line.
This makes the theory of Schr\"{o}dinger operators on graphs much richer.
However, the rich set of possible vertex coupling conditions complicates the construction of solvable models for  specific quantum processes because the problem of choosing physically motivated conditions arises.
The mathematical approach to the construction of such models, in addition to the experimental one, consists in diffe\-rent approximations of Hamil\-to\-nians on graphs. The approximation of a graph by thin tubular domains
is the most natural. The problem then is to analyse the convergence of $3$D Hamiltonians as the tubular networks  shrink to the graph and obtain the limit point interactions at the vertices \cite{MolchanovVainberg2006, MolchanovVainberg2007, Grieser2008, AlbeverioCacciapuotiFinco2007, CacciapuotiExner2007, CacciapuotiFinco2010, Post2012}.
Another way of finding physically motivated point interactions is the approximation of quasi-one-dimensional meso- or nano-scale systems by  Schr\"{o}dinger operators on a graph with potentials singularly perturbed in a neighbourhood of the vertices \cite{ExnerLMP1996, Manko2010, Exner2011, ExnerManko2013, Manko2015}. Then the limit Hamiltonian, obtained in a proper operator topology, is the desired solvable model of the system.
Also interesting is the inverse problem, which consists in constructing approximations of given point interactions at a vertex by some regular Hamiltonians \cite{CheonShigehara1998, CheonExner2004, CheonExnerTurek2010}. It has been proven in \cite{CheonExnerTurek2010} that any singular vertex coupling can be approximated by a family of quantum graphs with simultaneous perturbation of the geometry of the graph and the potentials.

In this paper, we construct solvable models for some non-relativistic quantum processes in ramiﬁed structures.
We study the Schr\"{o}dinger operators on a star graph with the Coulomb-type potentials having  singularities at the vertex. We investigate the convergence of Hamiltonians $H_\eps$ with regularised (cut-off) potentials to give the operators with the Coulomb potentials a mathematically precise meaning and find physically motivated conditions at the vertex. The results obtained in this paper remain valid for general quantum graphs. In this model, there is no interaction between vertices, and only the behavior of the Coulomb-type potential in the vicinity of vertex is significant. Since we are interested in vertex couplings caused by the Coulomb singularity, we study the case of a star-shaped quantum graph.
Our analysis of the Coulomb Hamiltonian can be viewed as a continuation of \cite{GolovatyJMP2019}, where we have found conditions of the norm resolvent convergence of $H_\eps$ on the line and have constructed the solvable models for the one-dimension hydrogen atom.

The rest of the paper is organized as follows.  In Section~\ref{SecResults},  we introduce notation and basic deﬁnitions, and then we state our main results (Theorems~\ref{MainTheorem}-\ref{OprQConvergenceToDirectSum}) on the norm resolvent convergence of $H_\eps$. We also discuss the solvable models constructed in Theorem~\ref{MainTheorem}  and compare them with previous studies dealing with partial cases of the problem. Theorem 4 shows the connection between the resonant decomposition of the graph and the structure of the boundary operator.
In Section~\ref{SecSelfAdj}, we describe all self-adjoint realisations of the formal Schr\"{o}dinger operators on a graph with the Coulomb-type potentials. The proof of the main results is given in Sections~\ref{Sect3} and~\ref{Sect4}.

\section{Main results and discussion}\label{SecResults}

Let $\cG$ be a noncompact  star graph  consisting of $n$ semi-inﬁnite edges $e_1,\dots,e_n$ meeting at a single vertex $a$.  The graph is considered as a planar metric graph with the metrics
coming from the natural embedding of $\cG$ into $\Real^2$.
A function $\phi$ on the graph $\cG$ is the collection $\{\phi_k\}_{k=1}^n$, where $\phi_k\colon e_k\to\Comp$ is a function on the edge $e_k$. Let $\phi(a)$ be the vector $(\phi_1(a),\dots,\phi_n(a))^T$ of the function values at the vertex understood as one-sided limits when the vertex is approached from a
particular edge.  We will also use $\phi'(a)$ to denote the vector $(\phi_1'(a),\dots,\phi_n'(a))^T$, where the one-sided derivative $\phi_k'(a)$ is taken in the direction from the vertex into the edge $e_k$. Here and subsequently, $T$ denotes the transpose. We adhere to the convention that a function $\phi$ belongs to some space $X(\cG)$
if $\phi_k$ belongs to $X(e_k)$ for all $k=1,\dots,n$, i.e., $X(\cG)=\bigoplus_{k=1}^{n}X(e_k)$ and $\|\phi\|_{X(\cG)}=\sum_{k=1}^{n}\|\phi_k\|_{X(e_k)}$.

We define the function $Q$ on $\cG$ as
\begin{equation}\label{CoulombPotential}
 Q_k(x)=\frac{q_k}{|x-a|} \quad\text{for } x\in e_k.
\end{equation}
Here $q_1,\dots,q_n$ are real numbers. We call $Q$ a Coulomb-type potential on the graph (see Figs.~\ref{FigCoulombPtn1} and \ref{FigCoulombPtn2}).
Since $Q$ has a non-integrable singularity at the vertex $a$, we consider the regularized potentials $Q_\eps\colon\cG\to\Real$ given by
\begin{equation}\label{Qeps}
Q_\eps(x)=
  \begin{cases}
   \phantom{\frac{\ln\eps}{\eps}} Q(x), & \text{if \ } |x-a|>\eps,\\
   \frac{\ln\eps}{\eps}\,\kappa\left(\eps^{-1}(x-a)\right), & \text{if \ } |x-a|<\eps,
  \end{cases}
\end{equation}
where $\kappa$ is a real $L^\infty(\cG)$-function  such that $\kappa=0$ for $|x-a|>1$, and $\eps$ is a small positive parameter. We introduce the family of potentials
\begin{equation}\label{Veps}
    W_\eps(x)=Q_\eps(x)+\eps^{-2}V\left(\eps^{-1}(x-a)\right)
    +\eps^{-1}U\left(\eps^{-1}(x-a)\right),
\end{equation}
where $U, V\colon\cG\to\Real$  are bounded measurable functions of compact support. Similarly to the case $n=2$, when $\cG$ can be regarded as the line, we hereafter interpret the families $\eps^{-1}U(\eps^{-1}(x-a))$ and $\eps^{-2}V(\eps^{-1}(x-a))$ as $\delta$- and $\delta'$-like potentials respectively (see Fig.~\ref{FigCoulombPtn3}). Here $\delta$ is Dirac's function. Let
\begin{equation*}
 \cK(\cG)=\left\{\phi\in W_2^2(\cG)\colon \phi \text{ is continuous at }a,\;\textstyle \sum^{n}_{k=1}\phi_k'(a)=0\right\}
\end{equation*}
be the space of $W^2_{2}(\cG)$-functions subject to the Kirchhoff conditions at the vertex~$a$.
We study the  convergence as $\eps\to 0$ of the Schr\"{o}dinger operators
\begin{equation*}
    H_\eps\phi= -\phi''+W_\eps\phi, \qquad \dmn H_\eps=\cK(\cG),
\end{equation*}
where $\phi''$ is the second order derivative of $\phi$ along edges.

\begin{figure}[t]
\centering
  \includegraphics[scale=0.45]{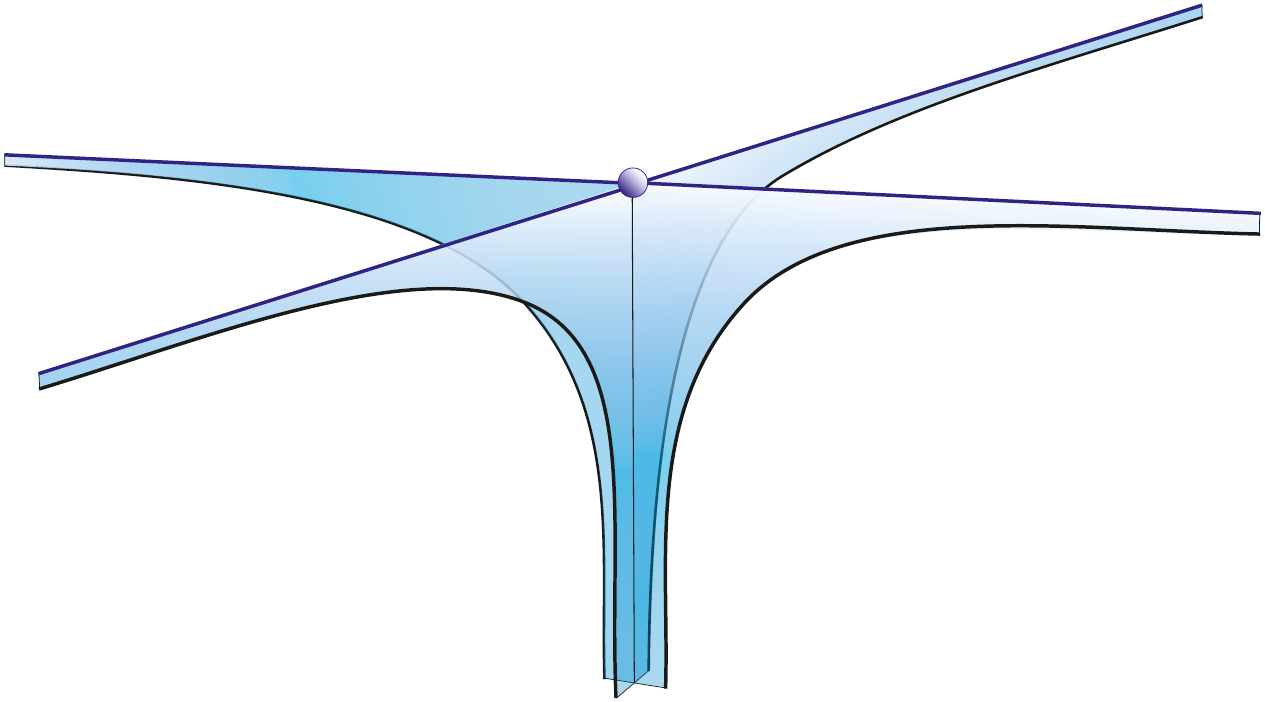}
  \caption{The ``classic'' Coulomb potential with  $q_k=-1$ for all $k$.}\label{FigCoulombPtn1}
\end{figure}

\begin{figure}[b]
\centering
  \includegraphics[scale=0.4]{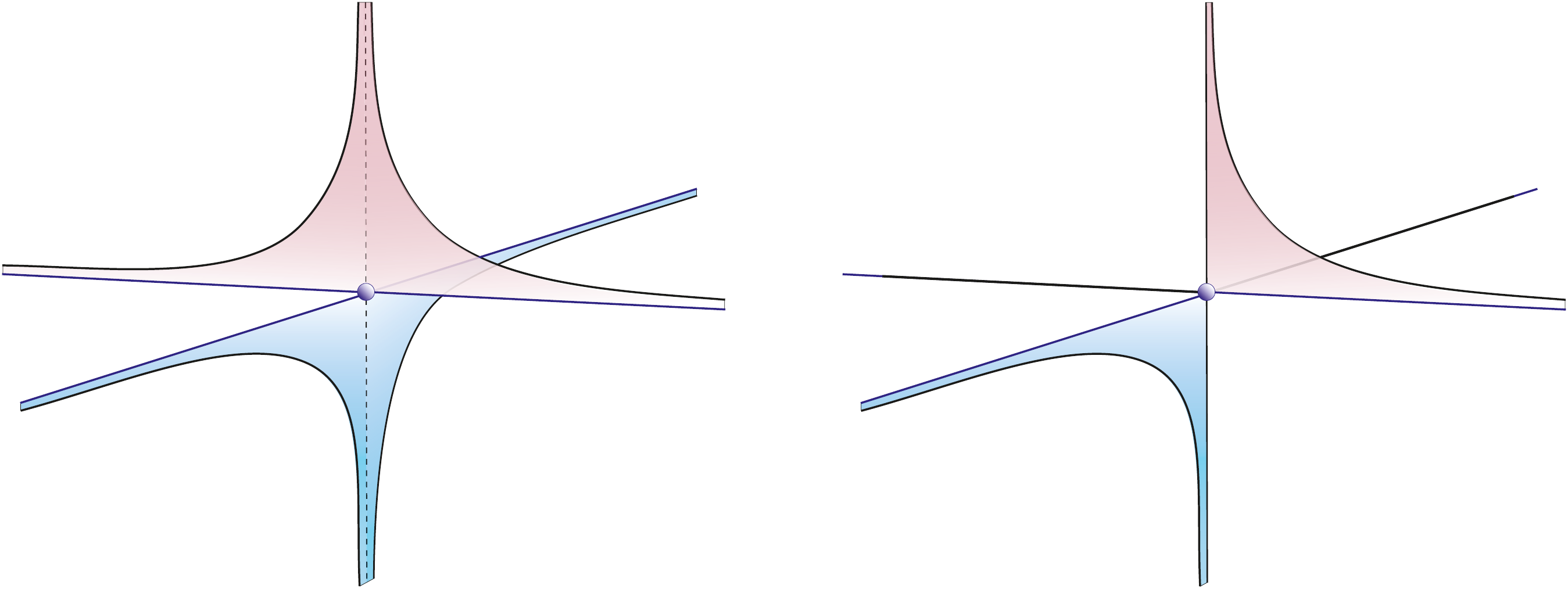}
  \caption{Some exotic Coulomb-type  potentials }\label{FigCoulombPtn2}
\end{figure}

Before stating our main result, we introduce some notation. Let us consider  the Schr\"odinger operator
 \begin{equation*}
 S\psi=-\psi'' + V\psi\quad \text{on } \cG, \qquad \dmn S= \cK(\cG).
\end{equation*}
\begin{defn}\rm
  We say that $S$ possesses  a \emph{zero-energy resonance} (alternatively the potential $V$ is \emph{resonant}) if there exists a non-trivial solution~$\psi$ of the equation $-\psi'' + V\psi=0$ on $\cG$
subject to the Kirchhoff conditions at $a$ that is bounded on the whole graph. We will call $\psi$ the \emph{half-bound state} of $S$.
\end{defn}

This definition generalizes the notion of zero-energy resonance for the Schr\"{o}dinger operators on the line \cite{Klaus1982}.
Denote by $\Psi_V$ the linear space of all half-bound states of $S$. On each edge the half-bound state is constant outside the support of $V$ as a bounded solution of the equation $\psi''=0$. Let $\psi^\infty_k$ be the limit of $\psi$ along the edge $e_k$ as $|x|\to\infty$.  We introduce the map $\ell\colon \Psi_V\to \Comp^n$ defined by $\ell(\psi)=(\psi^\infty_1,\dots,\psi^\infty_n)^T$.

\begin{defn}\rm
 Denote by $\cR_V$ the image of $\Psi_V$ under the map $\ell$  and call it the \emph{resonant space} of the potential $V$. We say that  $V$ is \emph{non-resonant} if $\cR_V$ is trivial.
\end{defn}

If $\psi^\infty_k=0$, then $\psi_k$ is identically zero outside $\supp V$ and hence $\psi_k=0$  on the whole edge $e_k$ as a solution of  the linear differential equation. Therefore $\ell$ is the injection of $\Psi_V$ into $\Comp^n$. So $\dim \cR_V=\dim \Psi_V$. As we will see below, the dimension of $\Psi_V$ cannot exceed $n-1$.
Assume that $\dim \Psi_V=r$ and fix a basis $\psi^{(1)},\dots,\psi^{(r)}$ in $\Psi_V$.
Let $M$ and $N$ be the Hermitian matrices of dimension $r$ with the entries
\begin{equation*}
  m_{ij}=\int_{\cG}U\psi^{(j)}\overline{\psi^{(i)}} \,d\cG, \qquad n_{ij}=\int_{\cG}\kappa\psi^{(j)}\overline{\psi^{(i)}} \,d\cG
\end{equation*}
respectively. Here $d\cG$ is the induced Lebesgue measure on $\cG$.
Also, let $K$ be the diagonal $n\times n$ matrix having $q_k$ from \eqref{CoulombPotential} as its diagonal entries.

We denote by $L$  the $n\times r$ matrix that is formed by the columns
\begin{equation}\label{VectorsLk}
  l_1=\ell(\psi^{(1)}),\; l_2=\ell(\psi^{(2)}),\;\dots\;,\; l_r=\ell(\psi^{(r)}).
\end{equation}
The entry $l_{ij}$ of $L$ is the limit of half-bound state $\psi^{(j)}$ as $x$ goes to infinity along the edge $e_i$.
Let $L^+$ denote the pseudoinverse of $L$. Since $\ell$ is an injection,  $L$ is full column rank and its pseudoinverse can be computed as
\begin{equation}\label{PseudoinverseL}
  L^+=(L^*L)^{-1}L^*,
\end{equation}
where $L^*$ is the Hermitian conjugate of $L$. This particular pseudoinverse constitutes a left inverse. In this case, we have $ L^+L=E$, where $E$ is the $r\times r$ identity matrix.

Given a Coulomb-type potential $Q$,  we  introduce the space
\begin{equation*}
\cW_Q=\left\{\phi\in L^2(\cG)\colon \phi,\, \phi'\in AC_{loc}(\cG),\: -\phi''+Q\phi\in L^2(\cG) \right\},
\end{equation*}
where $AC_{loc}(\cG)$ is the set of absolutely con\-ti\-nuous functions on any compact subset of $\cG$ which does not contain the vertex $a$. In the next section we show that the vector $\phi(a)=(\phi_1(a),\dots,\phi_n(a))^T$ is well defined for every $\phi\in \cW_Q$. However, the vector $\phi'(a)=(\phi_1'(a),\dots,\phi_n'(a))^T$
 may be undefined, because the first derivative of $\phi_k$ may have a logarithmic singularity at $a$. Instead of $\phi'(a)$ we will use the vector $\phi^{[1]}(a)$ consisting of the quasi-derivatives
\begin{equation*}
  \phi^{[1]}_k(a)= \lim_{e_k\ni x\to a}   \big(\phi'_k(x)-q_k \phi_k(a)\ln|x-a|\big).
\end{equation*}

We say the self-adjoint operators $H_\eps$  converge as $\eps\to0$ to the operator $\cH$ in the norm resolvent sense if the resolvents $(H_\eps-\zeta)^{-1}$ converge to $(\cH-\zeta)^{-1}$  in the norm operator topology for all $\zeta\in\Comp\setminus\Real$.
Our main result reads as follows.
\begin{thm}\label{MainTheorem}
Let $\cR_V$ be the resonant space of the potential $V$ present in \eqref{Veps}. Suppose that the following inclusion holds
\begin{equation}\label{ConvergenceCnd}
  \cR_V\subset \ker (NL^+-L^*K).
\end{equation}
Then the operator family $H_\eps$ converges, as $\eps\to0$,  in the norm resolvent sense to the operator $\cH$ defined by $\cH\phi=-\phi''+Q\phi$ on functions $\phi$ from the domain
\begin{equation}\label{DomcH}
  \dmn \mathcal{H}=\left\{\phi\in \cW_Q\colon \phi(a)\in \cR_V, \;\; ML^+ \phi(a)-L^*\qdr\phi=0 \right\}.
\end{equation}
\end{thm}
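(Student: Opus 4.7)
The plan is to establish norm resolvent convergence by constructing, for arbitrary $f \in L^2(\cG)$, an explicit approximate resolvent $R_\eps f$ via matched asymptotic expansions and showing $\|(H_\eps - \zeta) R_\eps f - f\|_{L^2(\cG)} \to 0$ uniformly in $\|f\|_{L^2(\cG)} \leq 1$; combined with the uniform bound $\|(H_\eps - \zeta)^{-1}\| \leq |\Im \zeta|^{-1}$ coming from self-adjointness of $H_\eps$, this will give $\|R_\eps f - (H_\eps - \zeta)^{-1} f\|_{L^2} \to 0$ uniformly, and a separate check that $\|R_\eps f - u\|_{L^2} \to 0$ uniformly, with $u := (\cH - \zeta)^{-1} f$, will complete the argument. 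The approximate resolvent is assembled by gluing an \emph{outer} piece, equal to $u$ on $\{|x - a| > \eps\}$ via a cutoff, with an \emph{inner} piece built on the rescaled star with variable $y = (x - a)/\eps$.

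For the outer analysis, since $W_\eps = Q$ outside $\{|x - a| < \eps\}$, $u$ already satisfies $(H_\eps - \zeta) u = f$ there. A Frobenius-type analysis of $-u_k'' + (q_k / |x - a|)\, u_k = \zeta u_k + f_k$ at the regular-singular point $a$ yields
\[
u_k(x) = A_k + (B_k - q_k A_k)(x - a) + q_k A_k (x - a) \ln |x - a| + o(x - a),
\]
where $A := \phi(a) \in \cR_V$ and $B := \phi^{[1]}(a)$ are linked by $M L^+ A = L^* B$, the two vertex conditions in \eqref{DomcH}. This local expansion will be matched against the inner profile in the overlap region $\eps \lesssim |x-a|\lesssim 2\eps$.

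For the inner analysis, with $\tilde u_\eps(y) := \phi(a + \eps y)$ the equation becomes
\[
-\tilde u_\eps'' + \bigl( V + \eps U + \eps \ln \eps \, \kappa + \eps Q_* \bigr) \tilde u_\eps = \eps^2 (\zeta \tilde u_\eps + f),
\]
where $Q_*|_{e_k}(y) = q_k/y_k$ for $|y| > 1$ and $Q_* = 0$ for $|y| < 1$ is the inner manifestation of the untruncated Coulomb tail. With the ansatz $\tilde u_\eps = \psi_0 + \eps \ln \eps \, \tau + \eps \psi_1$ one solves the hierarchy
\begin{align*}
-\psi_0'' + V \psi_0 &= 0, \\
-\tau'' + V \tau &= -\kappa \psi_0, \\
-\psi_1'' + V \psi_1 &= -U \psi_0 - Q_* \psi_0,
\end{align*}
each with Kirchhoff at $y = 0$. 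At leading order, $\psi_0 = \sum_j c_j \psi^{(j)} \in \Psi_V$; matching the constant asymptotic value against the outer gives $L c = A$, i.e.\ $\phi(a) \in \cR_V$. At order $\eps \ln \eps$, the Fredholm pairing of the $\tau$-equation against each $\psi^{(i)}$ (integration by parts over $\cG$, with the Kirchhoff contributions at $a$ cancelling) produces $N c = L^* \beta^\tau$, where $\beta^\tau$ is the linear asymptotic slope of $\tau$; matching with the $\eps \ln \eps$ piece of the outer Frobenius forces $\beta^\tau = K A$, so the solvability reduces to $N L^+ A = L^* K A$, which is precisely the hypothesis \eqref{ConvergenceCnd}. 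At order $\eps$, the tail $Q_* \psi_0 \sim q_k A_k / y_k$ drives a $y \ln y$-growth of $\psi_1$ that reproduces the outer $(x - a) \ln |x - a|$ term; after the $\ln R$-divergences in the Fredholm pairing cancel against the corresponding divergences of the outer expansion, the remaining compatibility identity is $L^* B = M L^+ A$, the second half of \eqref{DomcH}.

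Finally, glue inner and outer by a smooth partition of unity across $\eps \lesssim |x - a| \lesssim 2 \eps$, arranging continuity and the Kirchhoff sum so that $R_\eps f \in \cK(\cG) = \dmn H_\eps$. Then $(H_\eps - \zeta) R_\eps f - f$ decomposes into truncation residuals (of order $o(1)$ in $L^2$ uniformly in $\|f\| \leq 1$ precisely because \eqref{ConvergenceCnd} eliminates the leading order-$\eps \ln \eps$ residue) and commutator terms from the cutoffs (of order $O(\eps^{1/2} |\ln \eps|)$, the cutoff derivative being supported on a set of measure $O(\eps)$ while the inner profiles are bounded with controlled logarithmic growth). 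The principal obstacle is the coupling of two logarithmic mechanisms: the $\ln \eps$ factor manufactured in the inner Coulomb regularization $\frac{\ln \eps}{\eps} \kappa$ and the Coulomb logarithm $\ln |x - a|$ in the outer Frobenius expansion. These must be tracked simultaneously so that the Fredholm conditions at orders $\eps \ln \eps$ and $\eps$ assemble exactly into the two vertex conditions in \eqref{DomcH}, with \eqref{ConvergenceCnd} emerging as the sharp compatibility at the logarithmic order.
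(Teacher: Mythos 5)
Your proposal follows essentially the same route as the paper: a matched-asymptotics quasi-resolvent built from $(\cH-\zeta)^{-1}f$, with the identical three-term inner hierarchy whose Fredholm solvability conditions yield $\phi(a)\in\cR_V$, condition \eqref{ConvergenceCnd} at order $\eps\ln\eps$, and $ML^{+}\phi(a)=L^{*}\phi^{[1]}(a)$ at order $\eps$, concluded by a residual estimate combined with $\|(H_\eps-\zeta)^{-1}\|\le|\Im\zeta|^{-1}$. The only organizational difference is that the paper poses the inner problems on the compact rescaled star $\Gamma=\{|t|\le 1\}$ with Neumann data supplied by the $C^1$-matching at $|x-a|=\eps$ (repairing the resulting jumps with an explicit corrector), which sidesteps the divergent Fredholm pairings and overlap-region bookkeeping that your unbounded inner region with the Coulomb tail $Q_{*}$ forces you to handle.
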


Also, we can estimate the rate of the resolvent convergence.
\begin{thm}\label{TheoremEst}
Under the assumptions of Theorem~\ref{MainTheorem}, the inequality
\begin{equation*}
  \|(H_\eps-\zeta)^{-1}-(\cH-\zeta)^{-1}\|\leq C(1+|\Im\zeta|^{-1})\,\eps^{1/4}
\end{equation*}
holds for any $\zeta\in \Comp\setminus\Real$ with  $C$  independent of $\eps$ and $\zeta$.
\end{thm}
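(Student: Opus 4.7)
The plan is to make the construction used to prove Theorem~\ref{MainTheorem} quantitative. For $\zeta\in\Comp\setminus\Real$ and $f\in L^2(\cG)$, put $u=(\cH-\zeta)^{-1}f$, so that $u\in\dmn\cH$ and in particular $u(a)\in\cR_V$. I aim to construct an approximating function $\tilde u_\eps\in\cK(\cG)=\dmn H_\eps$ satisfying the two quantitative bounds $\|\tilde u_\eps-u\|_{L^2(\cG)}\le C\eps^{1/4}\|f\|$ and $\|(H_\eps-\zeta)\tilde u_\eps-f\|_{L^2(\cG)}\le C\eps^{1/4}\|f\|$, with $C$ independent of $\eps$, $\zeta$ and $f$. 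Since $\tilde u_\eps-(H_\eps-\zeta)^{-1}f=(H_\eps-\zeta)^{-1}\bigl[(H_\eps-\zeta)\tilde u_\eps-f\bigr]$, the self-adjoint bound $\|(H_\eps-\zeta)^{-1}\|\le |\Im\zeta|^{-1}$ combined with the triangle inequality then delivers the estimate claimed in the theorem.

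The approximation is built by matched asymptotic expansions. Decompose $u(a)=Lc$ with $c=L^+u(a)\in\Comp^r$, which is legitimate because $u(a)\in\cR_V=\Im L$ and $L^+L=E$. The inner corrector is the rescaled half-bound state $\Phi_\eps(x)=\sum_{j=1}^{r}c_j\psi^{(j)}\bigl(\eps^{-1}(x-a)\bigr)$, and the outer piece is $u$ itself. These are spliced on a transition layer of width $\eps$ by a smooth cut-off, the resulting $\tilde u_\eps$ being continuous at $a$ and obeying the Kirchhoff condition by design. Compatibility of this gluing with the limit boundary relation $ML^+\phi(a)-L^*\qdr\phi=0$ is guaranteed precisely by the hypothesis $\cR_V\subset\ker(NL^+-L^*K)$, which encodes the cancellation among the $\delta$- and $\delta'$-like contributions coming from $U$, $\kappa$ and from the Coulomb strengths collected in $K$.

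Next I would bound the residual $r_\eps:=(H_\eps-\zeta)\tilde u_\eps-f$ by splitting $\cG$ into the inner ball $\{|x-a|<\eps\}$, the transition layer, and the outer region. Outside $\{|x-a|<2\eps\}$ the residual vanishes because $Q_\eps=Q$, $U=V=0$ there and $\tilde u_\eps=u$ satisfies $(\cH-\zeta)u=f$. In the inner ball the rescaled equation reduces to leading order to $-\psi''+V\psi=0$, so the pointwise residual is $O(1)$ on a set of measure $\eps$, while the contributions of the subleading terms $\zeta\Phi_\eps$, $\eps^{-1}U\Phi_\eps$ and $Q_\eps\Phi_\eps$ are controlled by the entries of $M$ and $N$. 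In the transition layer the commutators between the cut-off and the differential operator, together with the logarithmic mismatch between $u'$ and $\eps^{-1}\Phi_\eps'$, produce the most delicate piece, of order $\eps^{1/2}|\ln\eps|^\gamma$ in $L^2$, which is absorbed into the crude bound $O(\eps^{1/4})$. The inequality $\|\tilde u_\eps-u\|_{L^2(\cG)}=O(\eps^{1/2})$ follows at once from the fact that the modification is supported on $\{|x-a|<2\eps\}$ and $u$ is uniformly bounded there.

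The main obstacle is the transition-layer analysis: the quasi-derivative $\qdr\phi$, rather than the classical $\phi'(a)$, has to be propagated through the rescaling and the cut-off because $\phi'_k$ carries the logarithmic singularity $q_k\phi_k(a)\ln|x-a|$ inherited from the Coulomb term, and the regularised height $\eps^{-1}\ln\eps$ of $Q_\eps$ further inflates pointwise bounds. Balancing these $|\ln\eps|$ factors against the $\eps$-volume of the boundary layer is what forces the non-sharp but convenient exponent $1/4$; a finer exponent would demand a higher-order inner expansion, which is not needed for the stated theorem.
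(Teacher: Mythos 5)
Your overall strategy is the same as the paper's: build a quasimode $Y_\eps\in\dmn H_\eps$ from the limit solution $y=(\cH-\zeta)^{-1}f$, bound the residual $(H_\eps-\zeta)Y_\eps-f$ in $L^2$ uniformly in $f$, and convert this into a resolvent estimate via $\|(H_\eps-\zeta)^{-1}\|\le|\Im\zeta|^{-1}$. The gap is in your inner expansion. You keep only the leading inner term $\Phi_\eps(x)=\sum_j c_j\psi^{(j)}(\eps^{-1}(x-a))$ and splice it to $y$ across a layer of width $\eps$. But at the matching points $a_k^\eps$ the outer derivative behaves like $y_k'(a_k^\eps)=q_k y_k(a)\ln\eps+y_k^{[1]}(a)+o(1)$, whereas $\Phi_\eps'$ vanishes there (half-bound states are constant outside $\supp V$). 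The derivative mismatch is therefore of size $|\ln\eps|$ and does not decay. With a cut-off $\chi_\eps$ supported on a layer of width $\eps$, the commutator term $2\chi_\eps'(\Phi_\eps-y)'$ is $O(\eps^{-1}|\ln\eps|)$ pointwise on a set of measure $O(\eps)$, hence $O(\eps^{-1/2}|\ln\eps|)$ in $L^2$; spreading the correction over an $O(1)$ region still leaves a contribution of order $|\ln\eps|$. Either way the residual does not tend to zero, so your claimed transition-layer bound $\eps^{1/2}|\ln\eps|^\gamma$ is not attainable from this ansatz. A further warning sign is that in your scheme the hypothesis $\cR_V\subset\ker(NL^+-L^*K)$ and the function $\kappa$ never actually enter any estimate.

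The paper removes the obstruction by adding two inner correctors, $\eps\ln\eps\,v(\eps^{-1}x)$ and $\eps w(\eps^{-1}x)$, with $-v''+Vv=-\kappa u$, $v'|_{\partial\Gamma}=Ky(0)$ and $-w''+Vw=-Uu$, $w'|_{\partial\Gamma}=\qdrz{y}$. These reproduce exactly the $\ln\eps$-part and the finite part of $y'$ at the interface, reducing the derivative jump to $O(\eps^{1/2})$ and the value jump to $O(\eps|\ln\eps|)$; their Fredholm solvability conditions are precisely where \eqref{ConvergenceCnd} and the vertex condition $ML^+y(0)-L^*\qdrz{y}=0$ enter, so they are not optional refinements. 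Only then does a jump-correcting function $\rho^\eps$, built on an $O(1)$ region, have $C^2$-norm $O(\eps^{1/2})$, and the exponent $1/4$ arises not from balancing logarithms in the layer but from estimating $\|Q\rho^\eps\|$ against the Coulomb singularity by splitting the integral at $|x-a|=\eps^{1/4}$. In short, the higher-order inner expansion that your last paragraph declares unnecessary is exactly what the proof requires.
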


Let us state several corollaries of the main theorem. Some of them recover known results on the approximations of quantum-graph vertex couplings.

\subsection{$\delta$ couplings on graphs}
The following result itself is so well known that it is even difficult to say by whom it was first published (see, for instance, \cite{ExnerLMP1996}).

\begin{cor}\label{CorDeltaCoupling}
  Assume that $W_\eps(x)=\eps^{-1}U(\eps^{-1}(x-a))$, i.e., $Q$, $V$ and $\kappa$ are equal to zero. Then the operators $H_\eps$ converge in the norm resolvent topology
  to the operator $\cH$ acting as the negative second derivative on functions $\phi$ in $W_2^2(\cG)$  obeying the vertex conditions
  \begin{equation}\label{DeltaConditions}
    \phi_1(a)=\phi_2(a)=\cdots=\phi_n(a), \qquad \sum_{k=1}^n\phi_k'(a)-\phi_1(a)\int_\cG U\,d\cG=0.
  \end{equation}
\end{cor}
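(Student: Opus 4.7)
The plan is to derive the corollary as a direct specialization of Theorem~\ref{MainTheorem} to the setting $Q=V=\kappa=0$. The first step is to identify the half-bound state space $\Psi_V$: with $V=0$, the equation $-\psi''=0$ forces $\psi$ to be affine on each semi-infinite edge, so boundedness eliminates the linear term and leaves only constants; the Kirchhoff continuity then pins down a single common constant, while the sum-of-derivatives condition holds trivially. Hence $\Psi_V=\Comp\cdot\mathbf{1}$, $r=1$, the resonant space is $\cR_V=\Comp(1,\dots,1)^T$, the $n\times 1$ matrix $L$ equals $(1,\dots,1)^T$, and the pseudoinverse \eqref{PseudoinverseL} reduces to $L^+=n^{-1}(1,\dots,1)$.

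Next I would compute the scalars $M$, $N$ and the diagonal matrix $K$. With $\kappa\equiv 0$ we get $N=0$; with every $q_k=0$ we get $K=0$; and $M$ collapses to the single number $\int_\cG U\,d\cG$. The hypothesis \eqref{ConvergenceCnd} then reduces to $\cR_V\subset\ker 0=\Comp^n$ and is automatic, so Theorem~\ref{MainTheorem} delivers the norm resolvent convergence of $H_\eps$ to the operator $\cH$ specified in \eqref{DomcH}.

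It remains to translate that abstract domain into \eqref{DeltaConditions}. Membership $\phi(a)\in\cR_V$ is exactly the continuity requirement $\phi_1(a)=\dots=\phi_n(a)$; denoting this common value by $c$, the identity $ML^+\phi(a)-L^*\qdr{\phi}=0$ becomes $c\int_\cG U\,d\cG-\sum_{k=1}^{n}\phi_k'(a)=0$, where I have used that $\phi^{[1]}_k(a)=\phi_k'(a)$ since $q_k=0$ kills the logarithmic correction. Combining this with the observation that, when $Q=0$, the space $\cW_Q$ augmented by continuity at $a$ is simply $W_2^2(\cG)$ together with the continuity constraint matches the statement of the corollary. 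The only potential obstacle is bookkeeping with the pseudoinverse and the quasi-derivative definitions; no analytic work is required beyond invoking Theorem~\ref{MainTheorem}.
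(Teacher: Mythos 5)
Your proposal is correct and follows essentially the same route as the paper's own proof: both specialize Theorem~\ref{MainTheorem} by noting that $V=0$ is resonant with $\Psi_V$ spanned by the constant function, so $L=\mathbf{1}$, $L^+=n^{-1}\mathbf{1}^T$, $N=K=0$ (making \eqref{ConvergenceCnd} automatic), and $M=\int_\cG U\,d\cG$, after which \eqref{MainCouplConds} reduces to \eqref{DeltaConditions}. No gaps.
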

\begin{proof}
Our only task is to show how the vertex conditions
\begin{equation}\label{MainCouplConds}
\phi(a)\in \cR_V,\quad ML^+ \phi(a)-L^*\qdr\phi=0
\end{equation}
transform into  \eqref{DeltaConditions}.
Since $Q=0$ and $\kappa=0$, we have $\cW_Q=W_2^2(\cG)$, $\qdr\phi=\phi'(a)$, $K=0$ and $N=0$.
Therefore $NL^+-L^*K=0$, and condition~\eqref{ConvergenceCnd} is trivially fulfilled.

\begin{figure}[b]
\centering
 \includegraphics[scale=0.4]{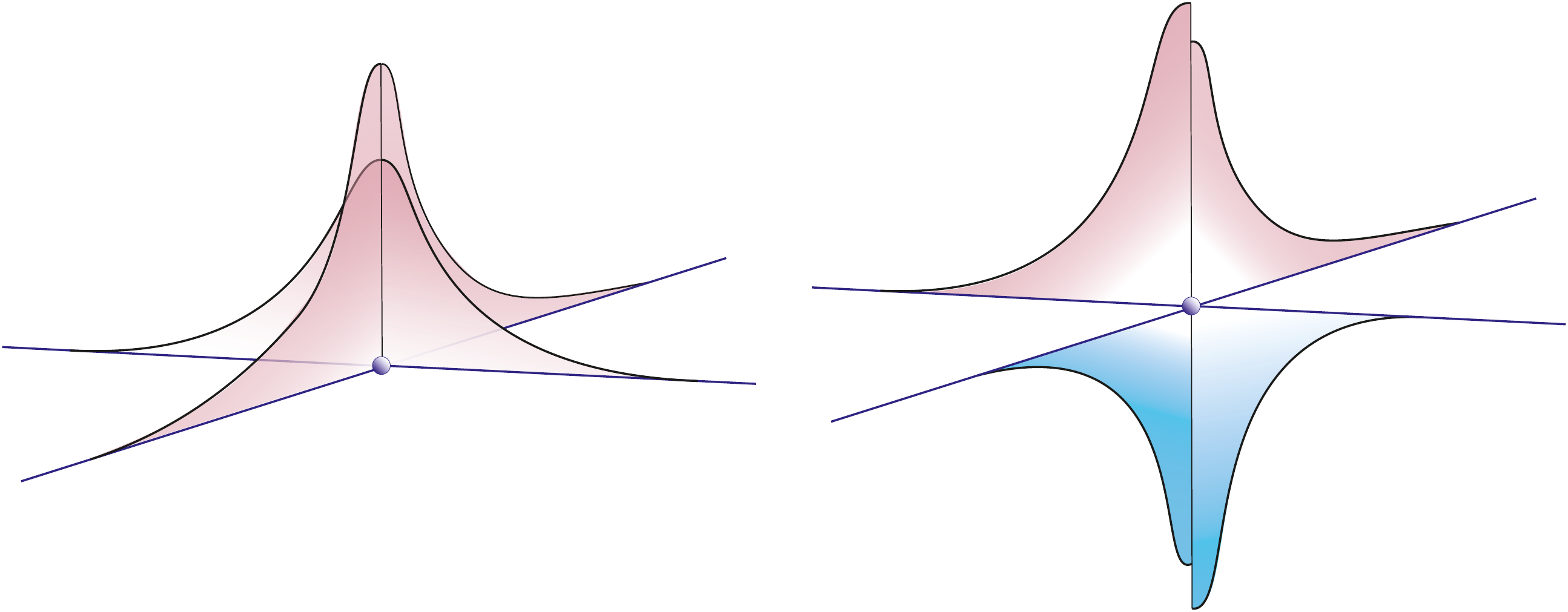}
  \caption{$\delta$- and $\delta'$-like localized potentials }\label{FigCoulombPtn3}
\end{figure}

The potential  $V=0$ is resonant. The corresponding  half-bound states are constant functions on $\cG$. Then $\cR_V$ is a one-dimension space in $\Comp^n$ generated by the vector $\mathbf{1}=(1,1,\dots,1)^T$. Also,  $M$ is the $1\times 1$ matrix with the single entry $m_{11}=\int_\cG U\,d\cG$.
The inclusion $\phi(a)\in \cR_V$ says that  $\phi(a)=c\cdot \mathbf{1}$ for some constant $c$, and therefore $\phi_1(a)=\cdots=\phi_n(a)$.
The matrix $L$ consists of the single column~$\mathbf{1}$. Applying \eqref{PseudoinverseL} we compute  $L^+=\left(\frac1n \;\,\frac1n \;\dots\; \frac1n\right)$. Then we have
\begin{multline*}
  ML^+ \phi(a)-L^*\phi'(a)=
  \begin{pmatrix}
    m_{11}
  \end{pmatrix}
{\textstyle\left(\frac1n \;\,\frac1n \;\dots\; \frac1n\right)}
\begin{pmatrix}
  \phi_1(a)\\
  \phi_2(a)\\
  \ldots\\
  \phi_n(a)
\end{pmatrix}-
  (1\;\, 1\; \dots\; 1)
\begin{pmatrix}
  \phi_1'(a)\\
  \phi_2'(a)\\
  \ldots\\
  \phi_n'(a)
\end{pmatrix}\\
=\frac {m_{11}}n \sum_{k=1}^n\phi_k(a)-\sum_{k=1}^n\phi_k'(a)=m_{11}\phi_1(a)-\sum_{k=1}^n\phi_k'(a)=0,
\end{multline*}
since $\phi$ is continuous at $a$.
\end{proof}

\subsection{$\delta'$ couplings on graphs}
The  Schr\"{o}dinger operators $S_\eps=-\frac{d^2}{dx^2}+\eps^{-2}V(\eps^{-1}x)$ on the line and their  norm resolvent convergence are related to the problem  of $\delta'$-potential and the scattering on localized dipole-type potentials.
In 1986,  \v{S}eba \cite{SebRMP1986} argued that for any potential $V$ of compact support the operators $S_\eps$ converge to the direct sum $S_-\oplus S_+$ of the unperturbed half-line Schr\"{o}dinger operators subject
to the Dirichlet boundary conditions at the origin. One therefore could suggest that no non-trivial interpretation of $\delta'$-potentials is possible. However, such a conclusion  contradicted the numerical analysis of
exactly solvable models with the resonant piecewise constant potentials, performed  by Zolotaryuk et al. \cite{ChristianZolotarIermak03, Zolotaryuk08, Zolotaryuk09}. We have revised \v{S}eba's result in  \cite{GolovatyHrynivJPA2010}. Although  the result remains correct for non-resonant potentials $V$, it has been shown that the resonant potentials produce the limit operator~$S_0$ defined by $S_0\phi=-\phi''$ on functions $\phi$ in~$W_2^2(\Real\setminus\{0\})$ obeying the interface conditions $\psi^-\phi(+0)-\psi^+ \phi(-0)=0$, $\psi^+ \phi'(+0) -\psi^-\phi'(-0)=0$.
Here  $\psi$ is a half-bound state of the operator $-\frac{d^2}{dx^2}+V$ on $\Real$ and $\psi^\pm=\lim_{x\to\pm \infty}\psi(x)$.
In \cite{GolovatyHrynivProcEdinburgh2013}, the result has been extended to potentials from the Faddeev-Marchenko class; see also
\cite{AlbeverioCacciapuotiFinco2007,CacciapuotiExner2007} for the convergence of $S_\eps$ in the case $\int_{\Real} V\,dx\neq 0$ and some applications to quantum waveguides, \cite{Seba1985} for the problem on a half-line, and
\cite{GolovatyMankoDopNAN2009, GolovatyManko2009} for the asymptotics of eigenvalues for  Schr\"{o}dinger operators perturbed by  the $\delta'$-like potentials.

The following statement generalizes the above results to star graphs.

\begin{cor}\label{CorDeltaPrimeCoupling}
Assume that  $W_\eps(x)=\eps^{-2}V(\eps^{-1}(x-a))$ and let $\cR_V$ be the resonant space of $V$.
Then $H_\eps\to \cH$, as $\eps\to 0$, in the norm resolvent sense, where $\cH$
acts as the negative second derivative on functions~$\phi$ from the domain
  \begin{equation*}
  \dmn \cH=\left\{\phi\in W_2^2(\cG)\colon \phi(a)\in \cR_V, \;\; \phi'(a)\in \cR_V^\perp \right\}.
  \end{equation*}
In particular, if the potential $V$ is non-resonant, then
 \begin{equation*}
 \cH=D_1\oplus\cdots\oplus D_n,
\end{equation*}
where $D_k$ is the free  Schr\"{o}dinger operator on $e_k$ subject to the Dirichlet condition $\phi_k(a)=0$ at the vertex.
\end{cor}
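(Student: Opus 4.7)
The plan is to obtain this corollary directly from Theorem~\ref{MainTheorem} by specializing every piece of data to the present setting and then interpreting the resulting vertex conditions geometrically. Here $Q=0$ (hence $q_k=0$ and $K=0$), $U=0$, and $\kappa=0$, so that $M$ and $N$ are the zero $r\times r$ matrices. Because $q_k=0$ on every edge there is no logarithmic term in the quasi-derivatives, giving $\phi^{[1]}(a)=\phi'(a)$ and $\cW_Q=W_2^2(\cG)$.

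The first step is to check the admissibility hypothesis \eqref{ConvergenceCnd} of Theorem~\ref{MainTheorem}. Since $N=0$ and $K=0$, the operator $NL^+-L^*K$ is identically zero, so the inclusion $\cR_V\subset \ker(NL^+-L^*K)$ holds automatically, regardless of the resonant structure of~$V$. Thus Theorem~\ref{MainTheorem} applies and yields the norm resolvent convergence $H_\eps\to\cH$, with $\cH\phi=-\phi''$ on the domain described by \eqref{DomcH}.

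The second step is to translate \eqref{DomcH} into the claimed form. The first condition, $\phi(a)\in\cR_V$, is unchanged. The second condition reduces, because $M=0$, to $L^*\phi'(a)=0$. By construction the columns of $L$ are precisely the basis vectors $\ell(\psi^{(1)}),\dots,\ell(\psi^{(r)})$ of $\cR_V$, so $L^*\phi'(a)=0$ is equivalent to $\phi'(a)$ being orthogonal to every element of $\cR_V$, i.e.\ $\phi'(a)\in\cR_V^\perp$. This gives the stated description of $\dmn\cH$.

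Finally, for the non-resonant case $\cR_V=\{0\}$ the conditions collapse to $\phi(a)=0$ (with $\phi'(a)$ unconstrained, since $\cR_V^\perp=\Comp^n$), which is the Dirichlet condition at the vertex on each edge separately. The graph therefore decouples and $\cH=D_1\oplus\cdots\oplus D_n$. I do not anticipate any serious obstacle: the whole argument is an unpacking of Theorem~\ref{MainTheorem} and a standard identification of the range/orthogonal-complement of $L$; the only point that deserves a brief line of justification is that $L^*\phi'(a)=0$ is genuinely equivalent to $\phi'(a)\perp\cR_V$, which follows from $\mathrm{range}\,L=\cR_V$.
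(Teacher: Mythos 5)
Your proposal is correct and follows essentially the same route as the paper's own proof: verify that \eqref{ConvergenceCnd} holds trivially since $N=0$ and $K=0$, reduce the vertex condition to $L^*\phi'(a)=0$, and identify this with $\phi'(a)\in\cR_V^\perp$ because the columns of $L$ span $\cR_V$. The treatment of the non-resonant case also matches the paper's argument.
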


\begin{proof}
 As in Corollary~\ref{CorDeltaCoupling},  condition \eqref{ConvergenceCnd} is trivially fulfilled, and hence
the ope\-ra\-tors $H_\eps$ converge. The equality $ML^+ \phi(a)-L^*\qdr\phi=0$ becomes $L^*\phi'(a)=0$, since $M=0$ and $\qdr\phi=\phi'(a)$. Therefore $\la l_1,\phi'(a)\ra_{\Comp^n}=0,\;\dots\;,\:\la l_r,\phi'(a)\ra_{\Comp^n}=0$, where the vectors $l_k$ are given by \eqref{VectorsLk} and  $\langle\cdot,\cdot\rangle_{\Comp^n}$ is the standard Hermitian inner product in $\Comp^n$. This shows that actually $\phi'(a)\in \cR_V^\perp$, because $l_1,\dots,l_r$ form a basis in $\cR_V$. In the case $\cR_V=\{0\}$, the inclusion $\phi(a)\in \cR_V$ says that $\phi_k(a)=0$ for all $k=1,\dots,n$. In addition, the condition $\phi'(a)\in \cR_V^\perp$ trivially holds.
\end{proof}

The vertex conditions described in the corollary are called \emph{scale-invariant}, because they  do not couple the function values $\phi(a)$ and the values of derivatives $\phi'(a)$. Then the operator $\cH$ has no eigenvalues, and its continuous spectrum coincides with $[0,+\infty)$. There are also other properties of the operators with the scale-invariant vertex couplings, such as the factorization
of quantum graph Hamiltonians and the energy independence of vertex scattering matrices  \cite{BerkolaikoKuchmentBook}.

For a $3$-edge star graph these conditions were first obtained by Man'ko \cite{Manko2010} in the context of scattering on the $\delta'$-like potentials. The conditions were written as
\begin{equation*}
  \frac{\phi_1(a)}{\theta_1}=\frac{\phi_2(a)}{\theta_2}=\frac{\phi_3(a)}{\theta_3},\qquad \theta_1\phi_1'(a)+\theta_2\phi_2'(a)+\theta_3\phi_3'(a)=0
\end{equation*}
for the case of simple resonance when  $\cR_V$ is generated by the vector $\theta=(\theta_1,\theta_2,\theta_3)$ and
\begin{equation*}
\eta_1\phi_1(a)+\eta_2\phi_2(a)+\eta_3\phi_3(a)=0,\qquad \frac{\phi_1'(a)}{\eta_1}=\frac{\phi_2'(a)}{\eta_2}=\frac{\phi_3'(a)}{\eta_3}
\end{equation*}
for the case of double resonance when the orthogonal complement of $\cR_V$ in $\Comp^3$ is generated by the vector $\eta=(\eta_1,\eta_2,\eta_3)$. Here we adopt the convention that $\phi_k(a)=0$ if  $\theta_k=0$ and $\phi_k'(a)=0$ if  $\eta_k=0$.

\subsection{$\alpha\delta'+\beta\delta$  couplings on graphs}
The proof of Corollary~\ref{CorDeltaPrimeCoupling}  remains valid for a more general setting. Namely, the negative second derivative operator subject to the  scale-invariant vertex couplings is the limit operator also for the family $H_\eps$ with the potentials
\begin{equation*}
 W_\eps(x)=\eps^{-2}V(\eps^{-1}(x-a))+\eps^{-1}U(\eps^{-1}(x-a))
\end{equation*}
if $M=0$. In the other case, we have the following statement.

\begin{cor}
If the potential  $W_\eps$ has the form $\eps^{-2}V(\eps^{-1}(x-a))+\eps^{-1}U(\eps^{-1}(x-a))$, then the operators $H_\eps$ converge to $\cH$  in the norm resolvent sense, where $\cH$
acts as the negative second derivative on functions~$\phi$ from the domain
\begin{equation*}
  \dmn \cH=\left\{\phi\in W_2^2(\cG)\colon \phi(a)\in \cR_V, \;\;  ML^+ \phi(a)-L^*\phi'(a)=0\right\}.
\end{equation*}
In particular, $\cH=D_1\oplus\cdots\oplus D_n$ if $V$ is non-resonant.
\end{cor}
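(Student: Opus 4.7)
The statement is a direct corollary of Theorem~\ref{MainTheorem} applied to the special case where the Coulomb-type potential $Q$ and the cut-off profile $\kappa$ both vanish, but $U$ is retained, so the plan is to reduce it to that theorem and verify each ingredient.

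First I would check that the hypothesis \eqref{ConvergenceCnd} is automatically satisfied. With $Q=0$ the diagonal matrix $K$ of the coefficients $q_k$ is zero, and with $\kappa=0$ the Hermitian matrix $N$ defined through $\kappa$ vanishes as well. Hence $NL^+-L^*K=0$ and $\cR_V\subset\ker(NL^+-L^*K)$ holds trivially, regardless of the dimension of $\cR_V$. This guarantees that Theorem~\ref{MainTheorem} applies and yields the norm resolvent convergence $H_\eps\to\cH$.

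Next I would identify the limit domain \eqref{DomcH} in the present setting. Since $Q=0$, the space $\cW_Q$ coincides with the set of $L^2(\cG)$-functions whose negative second derivative is in $L^2(\cG)$ on each edge away from $a$; combined with the local $W_2^2$-regularity and the finiteness of $\phi(a)$ this reduces to $\cW_Q=W_2^2(\cG)$. Moreover the quasi-derivative $\phi^{[1]}_k(a)=\phi'_k(a)-q_k\phi_k(a)\ln|x-a|$ becomes the ordinary one-sided derivative $\phi'_k(a)$ because $q_k=0$ for every $k$. Substituting these identifications into \eqref{DomcH} gives the announced description
\begin{equation*}
  \dmn\cH=\bigl\{\phi\in W_2^2(\cG)\colon \phi(a)\in\cR_V,\;\; ML^+\phi(a)-L^*\phi'(a)=0\bigr\}.
\end{equation*}

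Finally, for the non-resonant case I would argue as in the proof of Corollary~\ref{CorDeltaPrimeCoupling}: when $\cR_V=\{0\}$, the inclusion $\phi(a)\in\cR_V$ forces $\phi_k(a)=0$ for every edge $e_k$, the matrix $L$ has no columns so the second relation is vacuous, and the Hamiltonian decouples into $n$ free half-line Schr\"odinger operators with Dirichlet condition at the endpoint $a$, giving $\cH=D_1\oplus\cdots\oplus D_n$. There is no essential obstacle here, the only point requiring care being the verification that the quasi-derivative collapses to the usual derivative once $Q=0$, which is immediate from its definition.
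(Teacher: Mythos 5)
Your proposal is correct and follows essentially the same route as the paper, which gives no separate proof for this corollary but notes that the argument for the $\delta'$-coupling case carries over: one checks that $K=0$ and $N=0$ make condition \eqref{ConvergenceCnd} trivial, observes that $Q=0$ gives $\cW_Q=W_2^2(\cG)$ and $\phi^{[1]}(a)=\phi'(a)$, and handles the non-resonant case exactly as in Corollary~\ref{CorDeltaPrimeCoupling}. No gaps.
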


Exner and Man'ko \cite{ExnerManko2013} have studied the convergence of the Schr\"{o}dinger operators  with the potentials $\frac{\lambda(\eps)}{\eps^2}\,V(\eps^{-1}(x-a))$, where $\lambda(\eps)=1+\lambda \eps+O(\eps^2)$ as $\eps\to 0$. Their results correspond to the case when $U=\lambda V$. In our notation, the vertex couplings obtained in \cite{ExnerManko2013}  have the form
\begin{align}\label{ExMan0}
  &\phi_j(a)-\sum_{i=1}^r l_{i j} \phi_i(a)=0,\quad  j=r+1,\dots,n,\\\label{ExMan1}
  & \phi_i'(a)+\sum_{j=r+1}^n l_{i j} \phi_j'(a)- \sum_{j=1}^r m_{ij} \phi_j(a)=0, \quad i=1,\dots,r.
\end{align}
Let us show that this result is consistent with the above statement. To see this, note that conditions \eqref{ExMan0}, \eqref{ExMan1} are written using  the real matrix $L$ of the form
\begin{equation}\label{MatrixLExMan}
  L=
  \begin{pmatrix}
    \;E\;\\
    L_0
  \end{pmatrix},
\end{equation}
where $E$ is the identity matrix of size $r$ and  $L_0$ is the $(n-r)\times r$ matrix with the real entries $l_{ji}$, $j=r+1,\dots,n$, $i=1,\dots,r$ (see \eqref{VectorsLk}). The fact is that we can choose a real-valued basis in $\Psi_V$ (possibly after rearranging the edges of $\cG$) for which $L$ has the form \eqref{MatrixLExMan}.
To make sure that \eqref{ExMan0}, \eqref{ExMan1} can be written in the form \eqref{MainCouplConds}
we first notice that  the vectors
$\theta_j=(l_{1j},\dots,l_{rj},0,\dots,-1,\dots,0)^T$, $j=r+1,\dots,n$,
with $-1$ in the $j$-th position  are orthogonal to each column $l_k$ of $L$ and form a basis in $\cR_V^\perp$.
Therefore \eqref{ExMan0} is  a collection of equalities $\la\theta_j,\phi(a)\ra_{\Comp^n}=0$, and hence $\phi(a)$ belongs to $\cR_V$. Moreover, for all $\phi(a)\in \cR_V$ we have $\phi(a)=\phi_1(a)l_1+\cdots+\phi_r(a)l_r$.  This equality can be written as $\phi(a)=LP \phi(a)$, where $P$ is the block matrix  $\big(\,E\;0\,\big)$ of size $r\times n$. Since $L^*=L^T$ and $L^+L=E$, we have
\begin{equation*}
ML^+ \phi(a)-L^*\phi'(a)=ML^+LP \phi(a)-L^T\phi'(a)=MP\phi(a)-L^T\phi'(a),
\end{equation*}
and we see at once that $MP\phi(a)-L^T\phi'(a)=0$ is the matrix form of \eqref{ExMan1}.

The list of  standard conditions at a vertex includes the so-called $\delta'$-type conditions
$\phi_1'(a)=\phi_2'(a)=\cdots=\phi_n'(a)$, $\sum_{k=1}^n\phi_k(a)-\alpha\phi_1'(a)=0$,
which is similar to the $\delta$-type one \eqref{DeltaConditions}, with the roles of $\phi(a)$ and $\phi'(a)$ switched. Corollary~\ref{CorDeltaPrimeCoupling} gives us a regularization of these conditions for $\alpha=0$ when the resonant space $\cR_V$ has dimension $n-1$ and the space $\cR_V^\perp$ is generated by the vector $\mathbf{1}$. The case $\alpha\neq 0$ is not covered by our results.

\subsection{The Coulomb couplings on graphs}
The shape $V$ of $\delta'$-like perturbation affects both the convergence condition \eqref{ConvergenceCnd} and the limit operator through the resonances and half-bound states. Indeed, the $\delta'$-like term is the most singular in the potential $W_\eps$ as $\eps\to 0$.  Let us remove this term from $W_\eps$, which will make the Coulomb potential $Q_\eps$ dominant as $\eps\to 0$.

\begin{cor}\label{CorCoulombWithoutV}
  Assume that $W_\eps(x)=Q_\eps(x)+\eps^{-1}U(\eps^{-1}(x-a))$ and the regularization $Q_\eps$ of the Coulomb-type potential $Q$ satisfies the condition
  \begin{equation}\label{QPlusDeltaConvCond}
    \sum_{k=1}^nq_k=\int_\cG \kappa\,d\cG
  \end{equation}
 (cf. \eqref{CoulombPotential} and \eqref{Qeps}).
  Then $H_\eps$ converge in the norm resolvent topology
  to the operator $\cH$ acting as $\cH\phi=-\phi''+Q\phi$ on $\cG$, with the domain of functions $\phi\in\cW_Q$  obeying the vertex conditions
  \begin{equation}\label{QPlusDeltaConditions}
    \phi_1(a)=\phi_2(a)=\cdots=\phi_n(a), \qquad \sum_{k=1}^n\phi_k^{[1]}(a)-\phi_1(a)\int_\cG U\,d\cG=0.
  \end{equation}
Recall that $\phi^{[1]}_k(a)= \lim_{e_k\ni x\to a}   \big(\phi'_k(x)-q_k \phi_k(a)\ln|x-a|\big)$.
 \end{cor}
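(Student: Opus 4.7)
The plan is to apply Theorem~\ref{MainTheorem} in the special case $V\equiv 0$, which removes the most singular $\delta'$-like term from $W_\eps$ and leaves the Coulomb contribution as the dominant one. After this reduction, the argument is a direct computation along the lines of the proof of Corollary~\ref{CorDeltaCoupling}, with the Coulomb charges $q_k$ and the cut-off profile $\kappa$ now entering through the matrices $K$ and $N$ that were zero in that earlier corollary.

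First I would identify the resonant data. With $V\equiv 0$, a bounded solution of $-\psi_k''=0$ on each half-edge $e_k$ is a constant; continuity and the Kirchhoff condition at $a$ force all these constants to coincide, so $\Psi_V$ is one-dimensional and may be spanned by $\psi^{(1)}\equiv 1$. Hence $r=1$, $\cR_V=\mathrm{span}\{\mathbf{1}\}$ with $\mathbf{1}=(1,\dots,1)^T$, $L=\mathbf{1}$, and by \eqref{PseudoinverseL} the pseudoinverse reduces to $L^+=\tfrac{1}{n}(1,\dots,1)$. The matrices $M$ and $N$ become the scalars $m_{11}=\int_\cG U\,d\cG$ and $n_{11}=\int_\cG\kappa\,d\cG$, while $K=\mathrm{diag}(q_1,\dots,q_n)$ gives $L^*K=(q_1,\dots,q_n)$.

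Next I would check the convergence hypothesis \eqref{ConvergenceCnd}. A direct computation yields
\begin{equation*}
(NL^+-L^*K)\mathbf{1}\;=\;n_{11}-\sum_{k=1}^n q_k\;=\;\int_\cG \kappa\,d\cG-\sum_{k=1}^n q_k,
\end{equation*}
so the inclusion $\cR_V\subset\ker(NL^+-L^*K)$ is equivalent to the balance condition \eqref{QPlusDeltaConvCond}. Theorem~\ref{MainTheorem} then delivers the norm resolvent convergence $H_\eps\to\cH$ with $\cH\phi=-\phi''+Q\phi$ and domain given by \eqref{DomcH}.

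It remains to unpack \eqref{DomcH}. The inclusion $\phi(a)\in\cR_V$ is exactly the continuity relation $\phi_1(a)=\cdots=\phi_n(a)$. Using this and $L^+\phi(a)=\tfrac{1}{n}\sum_k\phi_k(a)=\phi_1(a)$, the equality $ML^+\phi(a)-L^*\phi^{[1]}(a)=0$ collapses to
\begin{equation*}
m_{11}\phi_1(a)-\sum_{k=1}^n\phi_k^{[1]}(a)\;=\;0,
\end{equation*}
which is the second condition in \eqref{QPlusDeltaConditions}. There is no serious obstacle here; the heavy analytic work is already encapsulated in Theorem~\ref{MainTheorem}, and the only point demanding a little care is to use the quasi-derivative $\phi^{[1]}$ in place of $\phi'$, since the Coulomb tail forces a logarithmic singularity of $\phi_k'$ at the vertex whenever $q_k\neq 0$.
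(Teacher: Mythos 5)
Your proposal is correct and follows essentially the same route as the paper: apply Theorem~\ref{MainTheorem} with $V\equiv 0$, compute $L=\mathbf{1}$, $L^+=\tfrac1n(1,\dots,1)$, $N=(n_{11})$, $K=\mathrm{diag}(q_k)$ to see that \eqref{ConvergenceCnd} reduces to \eqref{QPlusDeltaConvCond}, and then unpack \eqref{DomcH} exactly as in the $\delta$-coupling case with $\phi'$ replaced by the quasi-derivative $\phi^{[1]}$. All the computations check out.
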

 \begin{proof}
The potential $V=0$ is resonant and  $\cR_V=\{c\cdot \mathbf{1}, c\in\Comp\}$. Then
 $N$ is a $1\times 1$ matrix with the entry $n_{11}=\int_\cG \kappa\,ds$. In addition, $L= \left(1 \;\,1 \;\dots\; 1\right)^T$ and $L^+=\left(\frac1n \;\,\frac1n \;\dots\; \frac1n\right)$. Hence, we have
 \begin{multline*}
   NL^+-L^*K=
   \begin{pmatrix}
     n_{11}
   \end{pmatrix}
   {\textstyle
   \left(\frac1n \;\,\frac1n \;\dots\; \frac1n\right)}-
     {\textstyle \left(1 \;\,1 \;\dots\; 1\right)}
      \begin{pmatrix}
        q_1&&\\
        &\ddots&\\
        &&q_n
      \end{pmatrix}\\
      =\begin{pmatrix}
        \frac{n_{11}}n-q_1 &\frac{n_{11}}n-q_2&\dots&\frac{n_{11}}n-q_n
      \end{pmatrix}.
 \end{multline*}
 The convergence condition \eqref{ConvergenceCnd} now reads $\mathbf{1}\in \ker (NL^+-L^*K)$. Therefore
 \begin{equation*}
   (NL^+-L^*K)\mathbf{1}=\sum_{k=1}^n\left(\frac{n_{11}}n-q_k\right)=0,
 \end{equation*}
 which gives \eqref{QPlusDeltaConvCond}.
 The limit  vertex  conditions \eqref{QPlusDeltaConditions} can be obtained in the same way as it has been done in the proof of Corollary~\ref{CorDeltaCoupling} for the $\delta$ coupling.
 \end{proof}

We have not been able to prove the convergence of $H_\eps$ when condition \eqref{QPlusDeltaConvCond} does not hold. We suspect that there is, in general, no such convergence. However, even if the operators converge, the limit operator is trivial from the viewpoint of physics. Given $e\in E$, we denote by  $\cD_e$ the operator
\begin{equation*}
\cD_ev=-v''+\frac{q_e}{|x-a|}\,v\quad \text{on } e
\end{equation*}
subject to the Dirichlet condition $v(a) = 0$. Here $q_e=q_k$ for $e=e_k$, cf. \eqref{CoulombPotential}.

\begin{thm}\label{OprQConvergenceToDirectSum}
 Suppose that $W_\eps(x)=Q_\eps(x)+\eps^{-1}U(\eps^{-1}(x-a))$ and the regula\-ri\-zation $Q_\eps$ of a Coulomb-type potential $Q$ does not satisfy condition \eqref{QPlusDeltaConvCond}. If the operators $H_\eps$ converge in the strong resolvent topology, then the limit operator can only be the direct sum $\cD_{e_1}\oplus\cdots\oplus\cD_{e_n}$.
\end{thm}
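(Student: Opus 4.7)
The plan is to assume that $H_\eps\to\cH$ in the strong resolvent sense and to show that every $\phi\in\dmn\cH$ satisfies $\phi(a)=0$; the edges then decouple and, since both $\cH$ and $\cD_{e_1}\oplus\dots\oplus\cD_{e_n}$ are self-adjoint, the stated identification will follow by maximality. Observe first that on any compact subset of $\cG\setminus\{a\}$ one has $W_\eps=Q$ for all sufficiently small $\eps$, so passing to the limit in $-\phi_\eps''+W_\eps\phi_\eps=g_\eps$ away from $a$ identifies $\cH$ as acting by $-\phi''+Q\phi$ and forces $\dmn\cH\subset\cW_Q$. Given $\phi\in\dmn\cH$, the choice $\phi_\eps=(H_\eps-\zeta)^{-1}(\cH-\zeta)\phi\in\dmn H_\eps=\cK(\cG)$ yields $\phi_\eps\to\phi$ and $g_\eps:=H_\eps\phi_\eps\to\cH\phi$ in $L^2(\cG)$, and elliptic regularity promotes this to $W_{2,loc}^2$-convergence on $\cG\setminus\{a\}$.

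The heart of the argument is a matched-asymptotic analysis at the vertex, in the spirit of the proof of Theorem~\ref{MainTheorem}. In the inner variable $\xi=\eps^{-1}(x-a)$ the rescaled profile $y_\eps(\xi):=\phi_\eps(a+\eps\xi)$ is continuous at the central vertex with $\sum_k y_{\eps,k}'(0)=0$, and a direct computation shows that on edge $e_k$ it solves
\begin{equation*}
 -y_\eps''+\eps\ln\eps\,\kappa(\xi)\, y_\eps+\eps\, U(\xi)\, y_\eps+\frac{\eps\, q_k}{|\xi|}\chi_{\{|\xi|>1\}}\, y_\eps=\eps^2 g_\eps(a+\eps\xi).
\end{equation*}
All potential coefficients vanish in the limit $\eps\to 0$ uniformly on any bounded portion of the scaled star, so $y_\eps$ converges there to a bounded function $y_0$ with $y_0''=0$ on each edge and satisfying Kirchhoff at the vertex; hence $y_0\equiv c$ is a single constant. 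Matching to the outer solution identifies $c$ with the common value $\phi_k(a)$, which in particular forces continuity of $\phi$ at~$a$.

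To extract the logarithmic-order balance, integrate the rescaled equation over $\cG\cap\{|\xi|<R\}$ with $R$ exceeding the supports of $\kappa$ and~$U$. Kirchhoff at $\xi=0$ eliminates the vertex boundary term, so that
\begin{equation*}
 \sum_{k=1}^n y_{\eps,k}'(R)=\eps\ln\eps\int_\cG\kappa\, y_\eps\,d\cG+O(\eps).
\end{equation*}
On the other hand, on $|x-a|>\eps$ one has $-\phi_\eps''+Q\phi_\eps=g_\eps+\zeta\phi_\eps$ with right-hand side bounded in $L^2_{loc}$, and the Frobenius expansion of Coulomb solutions near $a$ gives $\phi_{\eps,k}'(x)=q_k\phi_{\eps,k}(a)\ln|x-a|+O(1)$. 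Evaluating at $x=a+\eps R$ and using $\phi_{\eps,k}(a)\to c$ yields
\begin{equation*}
 \sum_{k=1}^n y_{\eps,k}'(R)=\eps\sum_{k=1}^n\phi_{\eps,k}'(a+\eps R)=\eps\ln\eps\cdot c\sum_{k=1}^n q_k+O(\eps).
\end{equation*}
Passing $y_\eps\to c$ in the first identity, equating the two and dividing by $\eps\ln\eps$ produces
\begin{equation*}
 c\left(\sum_{k=1}^n q_k-\int_\cG\kappa\,d\cG\right)=0.
\end{equation*}
Since \eqref{QPlusDeltaConvCond} is assumed to fail, the parenthesis is nonzero, whence $c=\phi(a)=0$. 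Being true for every $\phi\in\dmn\cH$, this forces $\cH$ to decouple edge-by-edge into $\cD_{e_1}\oplus\dots\oplus\cD_{e_n}$.

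The main obstacle will be making the matched-asymptotic balance rigorous: one has to ensure that the remainder terms are genuinely $o(\eps\ln\eps)$, which requires uniform $L^\infty$-bounds on $y_\eps$ on bounded inner regions and control of $\phi_{\eps,k}(a)-c$ as $\eps\to 0$. These estimates rest on form boundedness of $H_\eps$ applied to data of the specific form $f=(\cH-\zeta)\phi$; the absence of the $\eps^{-2}V$ term in the present setting is essential here, keeping every inner-scale coefficient at order $\eps\ln\eps$ or smaller and so precluding any $O(1)$ inner dynamics that could invalidate the conclusion.
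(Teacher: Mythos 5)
Your proposal is correct and follows essentially the same route as the paper: rescale to the inner variable, show the inner profile tends to a single constant $c$ equal to the common vertex value, integrate the rescaled equation over the inner star, compare with the logarithmic asymptotics $\phi_{\eps,k}'\sim q_k c\ln\eps$ of the outer Coulomb solution, and divide by $\eps\ln\eps$ to obtain $c\bigl(\sum_{k=1}^n q_k-\int_\cG\kappa\,d\cG\bigr)=0$. The uniform bounds you defer at the end are exactly what Propositions~\ref{PropYepsY0} and~\ref{PropYisCont} supply, though the paper obtains them not from form boundedness but from the explicit Whittaker-function representation of the outer solution (which gives convergence of the trace $y_\eps|_{\partial\Gamma_\eps}$) combined with an elliptic estimate on the rescaled star.
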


We do not know publications on the regularization of Coulomb potentials on graphs.  However, there is a large number of works devoted  to the problem on the line.
These studies are related to one-dimensional models of the hydrogen atom; see the pioneering work of Loudon \cite{Loudon1959} and his review \cite{Loudon2016} on the present status of the theory and its relation to relevant experiments. The model has remained a topic of debate and controversy for more than $60$ years.
The main problem here is to provide a  mathematical sense for the formal differential expressions
\begin{equation}\label{pseudoCoulomb}
    -\frac{ d^2\phi}{dx^2}-\frac{\alpha}{|x|}\,\phi=E\phi,
    \qquad
    -\frac{ d^2\phi}{dx^2}-\frac{\alpha}{x}\,\phi=E\phi.
  \end{equation}
The first derivative of  $\phi$  has, in general, a singularity at $x=0$. Therefore the wave function should be subjected to  additional conditions at the origin. For these formal expressions, mathematics gives a large enough set of  coupling conditions that generate  self-adjoint operators in $L^2(\Real)$ \cite{FischerLeschkeMuller1995, deOliveiraVerri2009, BodenstorferDijksmaLanger2000}; the main  issue here is  a  physically motivated choice of such conditions.

Until recently, the even Coulomb potential $-\frac{\alpha}{|x|}$ was considered im\-pe\-netrable; the Hamiltonian of 1D hydrogen atom was assumed to be the direct sum $\cD_-\oplus \cD_+$ of the  half-line Schr\"odinger operators
$\cD_\pm=-\frac{d^2}{dx^2}-\frac{\alpha}{|x|}$ on $\Real_\pm$ with the Dirichlet boundary condition at zero. Many authors have obtained this result \cite{Loudon1959, Gunson1987, HainesRoberts1969, Andrews1976, MehtaPatil1978, Gesztesy1980, Klaus1980, Oseguera_deLlano1993, deOliveiraVerri2012} approximating the even Coulomb potential in different ways, e.g.,
\begin{equation}\label{DifferentRegs}
  -\alpha\min\left\{\dfrac1{|x|},\dfrac1\eps\right\},\quad -\dfrac{\alpha}{|x|+\eps}, \quad -\frac{\alpha }{\sqrt{x^2+\eps^2}},\quad-\frac{\alpha|x|}{x^2+\eps^2}.
\end{equation}
The question of penetrability of the odd Coulomb potential $-\frac{\alpha}{x}$ arose in quark-antiquark systems, namely, in the study of the Dirac oscillator. Moshinsky \cite{Moshinsky1993} was the first who derived the coupling conditions
\begin{equation*}
  \phi(+0)=\phi(-0), \quad \lim_{x\to +0}\big( \phi'(x)+\alpha\phi(0)\ln x\big)=\lim_{x\to -0}\big( \phi'(x)+\alpha \phi(0)\ln (-x)\big).
\end{equation*}
The non-triviality of this solvable model  provoked the dispute between Newton and Moshinsky \cite{Newton1994, Moshinsky1994}; another discussion \cite{FischerLeschkeMuller1995, KurasovJPA1996, FischerLeschkeMuller1997, Kurasov1997} was about the uniqueness of the model.

It is worth noting that the Coulomb potentials and the pseudopotential $\delta'$  are very sensitive to their regularization method \cite{HainesRoberts1969, FischerLeschkeMuller1995, GolovatyJMP2019, GolovatyHrynivJPA2010, GolovatyMFAT2012, GolovatyHrynivProcEdinburgh2013}.
In particular, the one-dimensional model of the hydrogen atom shows a critical dependence on the behaviour of cut-off Coulomb potentials at small distances.
In \cite{GolovatyJMP2019}, we constructed non-trivial solvable models for 1D Schr\"{o}dinger differential expressions with the potentials
\begin{equation*}
Q(x)=
  \begin{cases}
    \frac{q_1}{x}, & \text{if \ } x<0,\\
    \frac{q_2}{x}, & \text{if \ } x>0.
  \end{cases}
\end{equation*}
The key point of the study was the choice of  regularizations that converge in the space of distributions. The function $Q$ is not a distribution on the line because it is non-integrable at the origin, except for $q_1=q_2=0$.
However, its primitive (in the classical sense)
\begin{equation*}
F(x)=
  \begin{cases}
    q_1\ln(-x), & \text{if \ } x<0,\\
    q_2\ln x, & \text{if \ } x>0
  \end{cases}
\end{equation*}
is already a distribution. Let $\hat{Q}$ be the distributional derivative of $F$. Then there exist infinitely many distributions
\begin{equation*}
 f=\hat{Q}+\sum_{k=0}^m c_k\delta^{(k)}(x), \quad m\in \mathbb{N}\cup\{0\}, \quad c_k\in\Comp
\end{equation*}
that coincide with $Q$ outside the origin, that is to say $f(\phi)=\int_\Real Q\phi\,dx$ for all test functions $\phi\in C_0^\infty\left(\Real\setminus\{0\}\right)$. Restricting ourselves to the distributions of the form $\hat{Q}+c_0\delta(x)+c_1\delta'(x)$, we studied the  norm resolvent convergence of the Schr\"{o}dinger operators with the regularized potentials $Q_\eps+\eps^{-1}U(\eps^{-1}\,\cdot\,)+\eps^{-2}V(\eps^{-1}\,\cdot\,)$, where
\begin{equation*}
Q_\eps(x)=
  \begin{cases}
   \phantom{\frac{\ln\eps}{\eps}} Q(x), & \text{if \ } |x|>\eps,\\
   \frac{\ln\eps}{\eps}\,\kappa\left(\frac{x}{\eps}\right), & \text{if \ } |x|<\eps.
  \end{cases}
\end{equation*}
It is easy to check that $Q_\eps$ converge in $\cD'(\Real)$ if and only if
\begin{equation}\label{OneDimCond}
q_2-q_1=\int_{\Real}\kappa\,dx.
\end{equation}
A partial result of \cite{GolovatyJMP2019}, which is analogous to Corollary~\ref{CorCoulombWithoutV}, is as follows. If condition \eqref{OneDimCond} holds, then the operators $H_\eps= -\frac{d^2}{dx^2}+Q_\eps(x)
    +\frac{1}{\eps}\,U\left(\frac{x}{\eps}\right)$
converge in the norm resolvent sense to the operator $H_0=-\frac{d^2}{dx^2}+Q$ with the point interactions
\begin{equation}\label{CoulombPointInteractions}
\begin{aligned}
  \phi(+0)=\phi(-0), \quad \lim_{x\to +0}\big(& \phi'(x)-q_2\phi(0)\ln x\big)\\
  &-\lim_{x\to -0}\big( \phi'(x)-q_1\phi(0)\ln (-x)\big)= \phi(0)\int_\Real U\,dx.
\end{aligned}
\end{equation}
Otherwise, the limit operator is the direct sum of the half-line operators $-\frac{d^2}{dx^2}+\frac{q_1}{x}$ and $-\frac{d^2}{dx^2}+\frac{q_2}{x}$ subject to the Dirichlet boundary conditions at the origin.

Kurasov \cite{KurasovJPA1996, Kurasov1997} was the first to draw attention to the connection between the non-trivial solvable models for the Coulomb potentials and the treatment of these potentials as distributions (see also \cite{Gunson1987, Lunardi2019}). He obtained point interactions \eqref{CoulombPointInteractions} for the case $q_1=q_2=-\alpha$ by interpreting the formal differential expression $-\frac{ d^2}{dx^2}-\frac{\alpha}{x}$ as the map
\begin{equation*}
  H=P.V.\left(-\frac{ d^2}{dx^2}-\frac{\alpha}{x}\right),
\end{equation*}
from some Hilbert space to the space of distributions. Here $P.V.$ stands for the Cauchy principal value.

Note that each family of potentials \eqref{DifferentRegs} can be written as $Q^{(0)}_\eps+\eps^{-1}U(\eps^{-1}\cdot)$ for a suitable function $U$, where
\begin{equation*}
  Q^{(0)}_\eps(x)=
  \begin{cases}
   -\frac{\alpha}{|x|}, & \text{if \ }  |x|>\eps,\\
   \hskip14pt 0, & \text{if \ } |x|<\eps.
  \end{cases}
\end{equation*}
These regularizations correspond to the case $q_1=-\alpha$, $q_2=\alpha$ and $\kappa=0$. Hence all of them lead to  trivial point interactions since condition \eqref{OneDimCond} does not hold (compare with Theorem~\ref{OprQConvergenceToDirectSum}).

Thus, there is no unique model of the hydrogen atom described by  the pseudo-Hamiltonians in \eqref{pseudoCoulomb}. There are many solvable models that correspond to the formal Coulomb Hamiltonians. These models describe quantum systems with quite different properties.

 \subsection{Resonant decomposition of the graph and decoupling conditions}
In the limit $\eps\to 0$, the quantum system with the Hamiltonian $H_\eps$ can split into disconnected subsystems, which is affected by the structure of space $\Psi_V$ (or the resonant space $\cR_V$).
Let $E'$ be a subset of $E=\{e_k\}_{k=1}^n$ and let $\cG(E')$ be the subgraph of $\cG$ with the edges from $E'$. We introduce the space
\begin{equation*}
  X(E')=\left\{\psi\in \Psi_V\colon \supp \psi\subset \cG(E')\right\}
\end{equation*}
of all half-bound states that vanish outside $\cG(E')$. This space is non-trivial only if $E'$ contains at least two edges. Also, we call an edge  of $\cG$  \textit{non-resonant} if all half-bound states vanish on it.

\begin{thm}\label{CorNonResonance}
Let $E_0$ be the set of non-resonant edges of $\cG$ for given potential $V$.
Assume that $E_0, E_1,\dots, E_m$
is a partition of $E$ and the space $\Psi_V$ of half-bound states admits the decomposition
\begin{equation*}
    \Psi_V=X(E_1)\oplus\dots\oplus X(E_m).
  \end{equation*}
If $H_\eps$ converge, then  the limit operator $\cH$  can be represented as
$$
\cH=\Big(\bigoplus_{e\in E_0}\cD_e\Big)\oplus\cH(\cG_1)\oplus\cdots\oplus\cH(\cG_m),
$$
where the operator $\cH(\cG_k)$  acts on the subgraph $\cG_k=\cG(E_k)$ only (we will specify $\cH(\cG_k)$ in the proof).
If the potential $V$ is non-resonant,  then  $\cH=\cD_{e_1}\oplus\cdots\oplus\cD_{e_n}$.
\end{thm}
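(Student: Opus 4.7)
The plan is to exploit the adapted basis of $\Psi_V$ furnished by the decomposition hypothesis and then simply read off the block-diagonal structure of all the matrices appearing in Theorem~\ref{MainTheorem}. First, I would choose a basis $\psi^{(1)},\dots,\psi^{(r)}$ of $\Psi_V$ obtained by concatenating bases of $X(E_1),\dots,X(E_m)$, and relabel the edges so that those in $E_0$ come first, followed successively by those in $E_1,\dots,E_m$. For any $\psi^{(j)}\in X(E_k)$, the definition of $X(E_k)$ forces $\psi^{(j)}$ (and thus its asymptotic value along $e_i$) to vanish whenever $e_i\notin E_k$. Consequently the $n\times r$ matrix $L$ has zero rows in positions corresponding to $E_0$ and a block-diagonal form
\begin{equation*}
L=\begin{pmatrix} 0 \\ L^{(1)} & 0 & \cdots & 0 \\ 0 & L^{(2)} & \cdots & 0 \\ \vdots & & \ddots & \vdots \\ 0 & 0 & \cdots & L^{(m)} \end{pmatrix},
\end{equation*}
where $L^{(k)}$ has size $|E_k|\times r_k$, $r_k=\dim X(E_k)$. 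Since basis vectors drawn from different summands $X(E_k)$ have pairwise disjoint supports, the Hermitian matrices $M$, $N$ become block diagonal, $M=\diag(M^{(1)},\dots,M^{(m)})$, $N=\diag(N^{(1)},\dots,N^{(m)})$, and $K=\diag(K^{E_0},K^{(1)},\dots,K^{(m)})$ is block diagonal as well.

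Next I check that $L^+$ inherits the structure: $L^*L$ is block diagonal with blocks $(L^{(k)})^*L^{(k)}$, so $(L^*L)^{-1}L^*$ inherits the transpose of the block pattern of $L$. Using this, I decouple the two defining relations of $\dmn\cH$ from Theorem~\ref{MainTheorem}. The inclusion $\phi(a)\in\cR_V=\Im L$ is equivalent to the Dirichlet conditions $\phi_i(a)=0$ for $e_i\in E_0$ together with $\pi_k\phi(a)\in \Im L^{(k)}$ for $k=1,\dots,m$, where $\pi_k$ projects onto the $E_k$-coordinates. The equation $ML^+\phi(a)-L^*\qdr{\phi}=0$ splits into
\begin{equation*}
M^{(k)}(L^{(k)})^+\pi_k\phi(a)-(L^{(k)})^*\pi_k\qdr{\phi}=0,\qquad k=1,\dots,m,
\end{equation*}
with no residual constraint on the quasi-derivatives $\phi^{[1]}_i(a)$ for $e_i\in E_0$ (those columns of $L^*$ are zero). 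The same bookkeeping applied to $NL^+-L^*K$ shows that the convergence condition \eqref{ConvergenceCnd} is equivalent to the collection of analogous conditions $N^{(k)}=(L^{(k)})^*K^{(k)}L^{(k)}$ on the subgraphs, so it is automatically inherited once we assume $H_\eps$ converges.

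I then \emph{define} $\cH(\cG_k)$ to be the operator acting as $-\phi''+Q\phi$ on functions in $\cW_{Q|_{\cG_k}}$ subject to the two block conditions above. The decoupling displayed in the previous paragraph shows that a function $\phi$ lies in $\dmn\cH$ iff $\phi|_{e}$ satisfies $\phi_e(a)=0$ for every $e\in E_0$ and $\phi|_{\cG_k}\in\dmn\cH(\cG_k)$ for every $k\geq 1$. Since the differential expression is local along each edge and identifies with $\cD_e$ on $e\in E_0$, this produces exactly the claimed direct sum decomposition. If $V$ is non-resonant then $\Psi_V=\{0\}$, so $E_0=E$ and the summands $\cH(\cG_k)$ disappear, leaving $\cH=\cD_{e_1}\oplus\cdots\oplus\cD_{e_n}$.

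The substantive step, rather than a computational obstacle, is verifying that the support-disjointness of the bases of different $X(E_k)$ really propagates through the pseudoinverse: the block diagonality of $L^*L$ is crucial, and it is precisely the hypothesis $\Psi_V=X(E_1)\oplus\cdots\oplus X(E_m)$ (not merely the weaker statement that each $X(E_k)$ is nontrivial) that guarantees the bases have disjoint supports and hence makes the orthogonality across blocks automatic. Everything else is clean matrix bookkeeping once that structural observation is in place.
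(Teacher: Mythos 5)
Your proposal is correct and follows essentially the same route as the paper: reorder the edges, take a basis of $\Psi_V$ adapted to the decomposition $X(E_1)\oplus\cdots\oplus X(E_m)$, observe that $L$, $M$, $N$ (and hence $L^*L$ and $L^+$) become block-structured with zero rows of $L$ on the non-resonant edges, and read off the decoupling of the vertex conditions \eqref{MainCouplConds} into Dirichlet conditions on $E_0$ plus independent conditions on each $\cG_k$. The only difference is cosmetic — the paper writes out the case $m=2$ with $E_0$ listed last, while you treat general $m$ with $E_0$ first — so there is nothing further to add.
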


\begin{figure}[b]
\centering
  \includegraphics[scale=0.55]{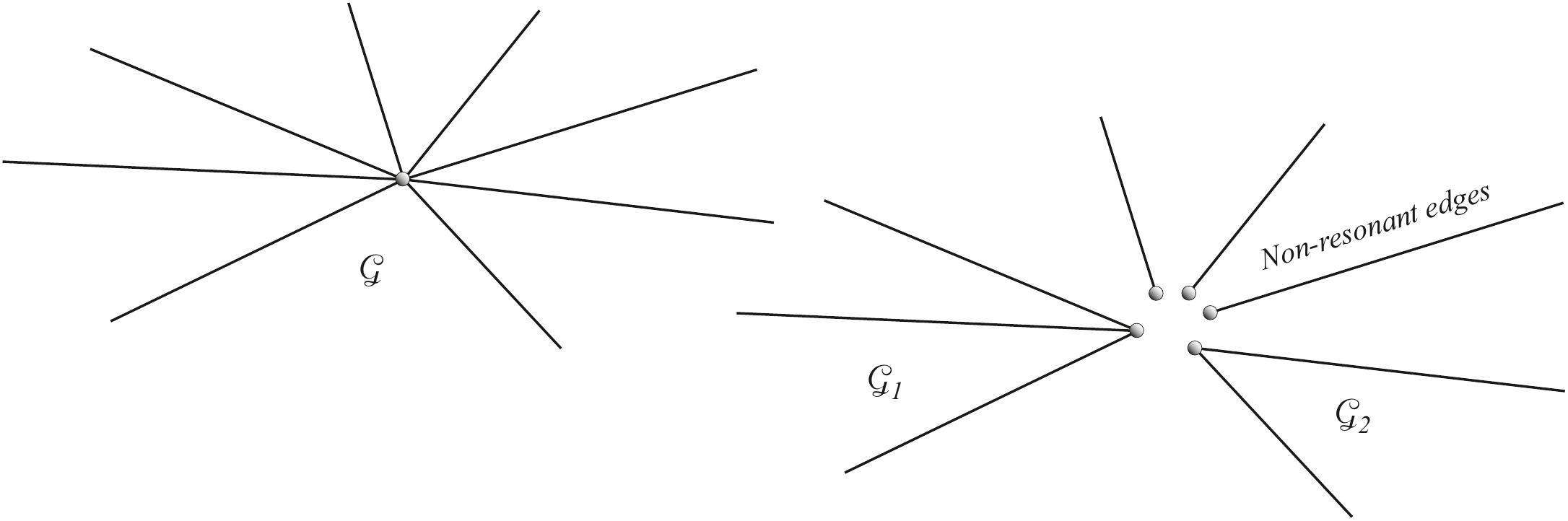}
  \caption{The resonant decomposition of the graph $\cG$}\label{ResonantDecomposition}
\end{figure}

\begin{proof}
We give the proof only for the case $\Psi_V=X(E_1)\oplus X(E_2)$; similar arguments apply to the general case. Let us rearrange the edges so that $E_1=\{e_1,\dots,e_{p_1}\}$, $E_2=\{e_{p_1+1},\dots,e_{p_2}\}$ for some $2\leq p_1<p_2 \leq n$ (see Fig.~\ref{ResonantDecomposition}). Then $E_0=\{e_{p_2+1},\dots,e_n\}$. Set $\dim X(E_k)=r_k$.
By the definition of  $X(E_k)$ the  matrices $M$ and $N$ have the block structure
\begin{equation*}
  M=
  \begin{pmatrix}
     M_1 &0  \\
      0  &  M_2
  \end{pmatrix}
,\qquad
  N=
  \begin{pmatrix}
     N_1 & 0 \\
      0  &  N_2
  \end{pmatrix},
\end{equation*}
where $M_k$ and $N_k$ are $r_k\times r_k$ blocks. Let $R_k$ be the image of $X(E_k)$ under the map $\ell$.
Then $\cR_V=R_1\oplus R_2$ and the matrices
\begin{equation*}
  L=
  \begin{pmatrix}
    L_1 & 0\\
    0 & L_2\\
    0 &0
  \end{pmatrix},\quad
  L^*=\begin{pmatrix}
    L_1^* & 0& 0\\
    0 & L_2^*& 0
  \end{pmatrix},\quad
  L^+=\begin{pmatrix}
    L_1^+  & 0& 0\\
    0 & L_2^+& 0
    \end{pmatrix}
\end{equation*}
also have the block structure. Here $L_1$ is a block of size $r_1\times p_1$ and $L_2$ is a block of size $r_2\times (p_2-p_1)$.
In addition, if $h\in \cR_V$, then $h_{p_2+1}=\dots=h_n=0$.
Therefore  conditions \eqref{MainCouplConds} can be written as
\begin{gather*}
  u(a)\in R_1,\quad M_1L_1^+ u(a)-L_1^*\qdr u=0, \\
   v(a)\in R_2,\quad M_2L_2^+ v(a)-L_2^*\qdr v=0, \\
   \phi_{p_2+1}(a)=0, \; \phi_{p_2+2}(a)=0,\;\dots,\;\phi_{n}(a)=0,
\end{gather*}
where $u$ and $v$ are the restrictions of $\phi$ to the subgraphs $\cG_1$ and $\cG_2$ respectively.
These decoupling conditions describe the direct sum
\begin{equation*}
  \cH=\Big(\bigoplus_{e\in E_0}\cD_e\Big)\oplus\cH(\cG_1)\oplus\cH(\cG_2).
\end{equation*}

If the space of half-bound states is trivial, then all edges of $\cG$ are non-resonant. Hence $E=E_0$ and so $\cH=\cD_{e_1}\oplus\cdots\oplus\cD_{e_n}$.
\end{proof}

\section{Self-adjointness of the limit operator}\label{SecSelfAdj}

Given $f\in L^2(\cG)$ and $\zeta\in \mathbb{C}$, we consider the differential equation
\begin{equation}\label{EqnOnlyOnGraph}
  -z''+(Q-\zeta)z=f \quad \text{on } \cG.
\end{equation}
We impose no conditions on $z$ at the vertex $a$, so  \eqref{EqnOnlyOnGraph} is the collection of equations
\begin{equation*}
  -z_k''+\left(\frac{q_k}{|x-a|} -\zeta\right)z_k=f_k\quad \text{on }e_k,\qquad  k=1,\dots,n.
\end{equation*}

\begin{prop}\label{PropYasymp}
Let $z$ be a $L^2(\cG)$-solution of \eqref{EqnOnlyOnGraph}. For each $k=1,\dots,n$ there exists a finite value $z_k(a)$ and the first derivative of $z_k$ has asymptotics
\begin{equation}\label{LogAsympOfDerv}
  z'_k(x)=q_k z_k(a)\ln|x-a|+b_k+o(1) \quad \text{as } e_k\ni x\to a,
\end{equation}
where $b_k$ is a constant \cite{Kurasov1997}.
\end{prop}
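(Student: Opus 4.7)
The claim is purely local at the vertex $a$, and the system \eqref{EqnOnlyOnGraph} decouples into independent equations on each edge. I would therefore fix $k$, parametrise $e_k$ by $t=|x-a|\in[0,\infty)$, and reduce the statement to the ODE
\[
-z''+\left(\frac{q_k}{t}-\zeta\right)z=f_k\quad\text{on }(0,\infty),
\]
with $z,f_k\in L^2(0,\infty)$. Only the behaviour as $t\to 0^+$ is in question, so I work on $(0,1]$ and look for a finite limit $z(0)$ together with the logarithmic asymptotics of $z'$.

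The first step is a Frobenius analysis of the homogeneous equation at the regular singular point $t=0$. The indicial exponents are $0$ and $1$, and solving the recursion yields a convergent power series
\[
u_1(t)=t+\tfrac{q_k}{2}\,t^2+O(t^3),\qquad u_1(0)=0,\;u_1'(0)=1.
\]
Because the exponents differ by the integer $1$, the second solution is forced to carry a logarithm. Writing $u_2(t)=1+q_k u_1(t)\ln t+w(t)$ and substituting, one finds that $w$ satisfies a regular (non-singular) inhomogeneous equation solvable by ordinary Picard iteration, producing an analytic $w$ with $w(0)=0$. Hence $u_2(0)=1$, the Wronskian $W=u_1u_2'-u_1'u_2$ is a nonzero constant, and a direct differentiation gives
\[
u_2'(t)=q_k\ln t+\beta+o(1)\quad\text{as }t\to 0^+
\]
for some constant $\beta$.

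Variation of parameters then represents every solution on $(0,1]$ in the form
\[
z=c_1u_1+c_2u_2+z_p,\quad z_p(t)=\frac{1}{W}\Big(u_2(t)\!\int_0^t\!u_1f_k\,ds-u_1(t)\!\int_0^t\!u_2f_k\,ds\Big).
\]
Because $u_1(s)=O(s)$ and $u_2$ is bounded near zero, the Cauchy--Schwarz inequality together with $f_k\in L^2$ gives $\int_0^t u_1f_k\,ds=o(t^{3/2})$ and $\int_0^t u_2f_k\,ds=o(t^{1/2})$; combined with $u_2'(t)=O(\ln t)$ and $u_1'(t)=O(1)$ this yields $z_p(t)=o(1)$ and $z_p'(t)=o(1)$ as $t\to 0^+$. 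Consequently $z(t)\to c_2$, a finite limit which I denote $z_k(a)$, and
\[
z'(t)=c_1u_1'(0)+c_2\bigl(q_k\ln t+\beta\bigr)+o(1)=q_kz_k(a)\ln t+b_k+o(1),
\]
with $b_k=c_1u_1'(0)+c_2\beta$, which is \eqref{LogAsympOfDerv}.

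The only delicate point is the Frobenius construction of $u_2$: producing a convergent expansion of the form $1+q_k t\ln t+O(t)$ and thereby locking in the coefficient $q_k$ in front of $\ln t$ in $u_2'$, which is precisely what forces the factor $q_kz_k(a)$ in the final asymptotics. Once that construction is in place, the rest of the argument is a routine Cauchy--Schwarz estimate; alternatively one may simply invoke the singular ODE analysis of Kurasov \cite{Kurasov1997} cited in the statement, which treats exactly this equation.
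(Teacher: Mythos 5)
Your argument is correct, and it actually supplies more than the paper does: the paper states Proposition~\ref{PropYasymp} as a known fact, citing Kurasov, and gives no proof at all (elsewhere it merely remarks that the homogeneous solutions can be expressed through Whittaker functions). Your Frobenius-plus-variation-of-parameters derivation is the standard analysis behind that citation, and all the key points check out: the indicial roots $0$ and $1$, the forced logarithm with coefficient exactly $q_k$ in front of $u_1\ln t$ (which is what produces $q_k z_k(a)\ln|x-a|$ in \eqref{LogAsympOfDerv}), the constancy and nonvanishing of the Wronskian, and the Cauchy--Schwarz bounds $\int_0^t u_1 f_k\,ds=o(t^{3/2})$, $\int_0^t u_2 f_k\,ds=o(t^{1/2})$ that make $z_p$ and $z_p'$ vanish at the vertex. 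Two cosmetic points you should tighten if you write this up: the equation for $w$ is not literally ``non-singular'' --- it still carries the coefficient $q_k/t$ in the zeroth-order term --- but since one seeks $w=\sum_{n\ge1}b_nt^n$ with $w(0)=0$ the Frobenius recursion (or a Picard iteration on $t^{-1}w$) goes through and yields a convergent series; and your particular solution solves $-z_p''+(q_k/t-\zeta)z_p=-f_k$ with the sign conventions as written, so either the sign of $z_p$ or of $W$ needs flipping, which of course does not affect the $o(1)$ estimates. Note also that the $L^2$ hypothesis on $z$ plays no role near the vertex (both homogeneous solutions are bounded there), so your conclusion holds for every local solution --- consistent with the paper's later use of the asymptotics for arbitrary elements of $\cW_Q$.
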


The following theorem is a generalization of the results from \cite{FischerLeschkeMuller1995, deOliveiraVerri2009, BodenstorferDijksmaLanger2000} for the case of quantum graphs.

\begin{thm}\label{OprQisSelfAdjoint}
  Let $Q\colon \cG\to\Real$ be the Coulomb-type potential given by \eqref{CoulombPotential}. Consider the operator $H$ acting as $H\phi=-\phi''+Q\phi$ on $\cG$ with the domain
 \begin{equation*}
   \dmn H =\big\{\phi\in \cW_Q\colon A\phi(a)+B\qdr{\phi}=0\big\},
 \end{equation*}
where $A$ and $B$ are  $n\times n$-matrices. The operator $H$ is self-adjoint in $L_2(\cG)$ if and only if
the $n\times 2n$ matrix $(A\,B)$ has the maximal rank and the matrix $AB^*$ is self-adjoint.
\end{thm}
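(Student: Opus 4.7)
The plan is to cast the problem as a boundary-triple / Green-identity analysis. First I would identify the minimal operator $H_{\min}$ as the closure of $-\phi''+Q\phi$ on $C_c^\infty(\cG\setminus\{a\})$ and set $H_{\max}=H_{\min}^*$. Proposition~\ref{PropYasymp} identifies $\dmn H_{\max}$ with $\cW_Q$, and in particular both the value $\phi(a)\in\Comp^n$ and the quasi-derivative $\qdr{\phi}\in\Comp^n$ are well-defined on this domain. The operator $H$ in the statement is an intermediate restriction of $H_{\max}$, so the task is to characterize when it coincides with a self-adjoint extension of $H_{\min}$.

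The central computation is a Green identity adapted to the logarithmic singularity. For $\phi,\psi\in\cW_Q$, parameterising each edge $e_k$ as $[a,\infty)$ and integrating by parts twice yields
\begin{equation*}
\la H_{\max}\phi,\psi\ra-\la\phi,H_{\max}\psi\ra
=\sum_{k=1}^{n}\lim_{x\to a^+}\bigl[\phi_k'(x)\overline{\psi_k(x)}-\phi_k(x)\overline{\psi_k'(x)}\bigr],
\end{equation*}
since the real potential $Q$ drops out and the boundary terms at infinity vanish (because $\phi$ and $-\phi''+Q\phi\in L^2$ force $\phi_k,\phi_k'\to 0$ at the edge ends). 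Substituting the expansion \eqref{LogAsympOfDerv} from Proposition~\ref{PropYasymp} into each summand, the divergent contributions of the form $q_k\ln|x-a|\,\phi_k(a)\overline{\psi_k(a)}$ cancel in pairs because $q_k\in\Real$, and the remaining finite part gives
\begin{equation*}
\la H_{\max}\phi,\psi\ra-\la\phi,H_{\max}\psi\ra
=\la\qdr{\phi},\psi(a)\ra_{\Comp^n}-\la\phi(a),\qdr{\psi}\ra_{\Comp^n}.
\end{equation*}
Thus the quasi-derivative is precisely the renormalised boundary value that turns the ill-defined $\phi'(a)$ into a bounded linear trace on $\cW_Q$.

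Next I would verify that the boundary map $\Gamma\phi=\bigl(\phi(a),\qdr{\phi}\bigr)$ surjects $\cW_Q$ onto $\Comp^n\oplus\Comp^n$. Edge-by-edge this reduces to producing, near $a$ on each $e_k$, a pair of local solutions of $-u''+q_k|x-a|^{-1}u=0$ (one regular, one with logarithmic first derivative) with independently prescribed $u(a)$ and $u^{[1]}(a)$, multiplied by a smooth cut-off supported near $a$. Combined with the Green identity, this shows that $\bigl(\Comp^n,\Gamma_0,\Gamma_1\bigr)$ with $\Gamma_0\phi=\phi(a)$, $\Gamma_1\phi=\qdr{\phi}$ is a boundary triple for $H_{\max}$, so the deficiency indices of $H_{\min}$ equal $(n,n)$.

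Finally, the abstract boundary-triple correspondence identifies self-adjoint extensions of $H_{\min}$ with self-adjoint linear relations $\Theta\subset\Comp^n\oplus\Comp^n$ via $\dmn H=\Gamma^{-1}(\Theta)$. For $\Theta_{A,B}=\{(x,y):Ax+By=0\}$, one has $\dim\Theta_{A,B}=2n-\rnk(A\,B)$, and maximality (i.e.\ dimension $n$) is equivalent to $\rnk(A\,B)=n$. Given this rank condition, isotropy of $\Theta_{A,B}$ for the symplectic form $\omega((x_1,y_1),(x_2,y_2))=\la y_1,x_2\ra-\la x_1,y_2\ra$ arising in the Green identity is equivalent, by a direct computation with the rows of $(A\,B)$, to $AB^*=BA^*$; this is the classical Kostrykin--Schrader criterion. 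These two conditions together characterize self-adjointness, proving the theorem. The main technical obstacle is the surjectivity of $\Gamma$: one must construct local solutions with the correct logarithmic asymptotics despite the non-integrable singularity of $Q$ at $a$, and show that their boundary data at $a$ realise every vector of $\Comp^n\oplus\Comp^n$ independently.
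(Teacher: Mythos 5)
Your proposal is correct and follows essentially the same route as the paper: the paper likewise reduces the question to the Kostrykin--Schrader criterion by showing that the form $\Omega(\phi,\psi)=(H\phi,\psi)-(\phi,H\psi)$ on $\cW_Q$ equals the Hermitian symplectic form evaluated on the boundary data $(\phi(a),\qdr{\phi})$, with the logarithmic divergences cancelling exactly as in your Green identity, and then invokes the maximal-isotropic-subspace characterization, which is just the symplectic-geometry dialect of your boundary-triple/self-adjoint-relation argument. The one point where you go beyond the paper is the surjectivity of the trace map $\phi\mapsto(\phi(a),\qdr{\phi})$ onto $\Comp^{2n}$, which the paper uses implicitly (its subspace $\cV_{AB}^Q$ must exhaust $\{(x,y)\colon Ax+By=0\}$) but does not verify; your construction of local solutions with independently prescribed value and quasi-derivative is the right way to close that gap.
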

\begin{proof}
In the case $Q=0$, this result was obtained by Kostrykin and Schrader \cite{KostrykinSchrader1999}.  They have described all possible self-adjoint vertex conditions
\begin{equation}\label{ABconditions}
  A\phi(a)+B\phi'(a)=0
\end{equation}
for the second derivative operator on $\cG$ in terms of the complex symplectic geo\-met\-ry. We recall briefly  their proof.   Let us consider the skew-Hermitian quadratic form $\Omega(\phi, \psi)=(\phi'',\psi)_{L_2(\cG)}-(\phi, \psi'')_{L_2(\cG)}$ for $\phi, \psi \in W_2^2(\cG)$. Let $\langle\;\rangle\colon W^2_2(\cG)\to \Comp^{2n}$ be  the  linear map $\langle\phi\rangle=(\phi(a), \phi'(a))^T$.
Integrating by parts we find
\begin{equation*}
  \Omega(\phi, \psi)=\sum_{i=1}^n(\phi_{i}(a) \overline{\psi_i'}(a)-\phi_i'(a) \overline{\psi_i}(a))=:\omega(\langle\phi\rangle,\langle\psi\rangle).
\end{equation*}
Let the linear subspace $\cV_{AB}$ of $\Comp^{2n}$ be given as the set of all vectors $\langle\phi\rangle$ satisfying \eqref{ABconditions}. To find all self-adjoint vertex conditions of type \eqref{ABconditions} it suffices to find all maximal isotropic subspaces $\cV_{AB}$ with respect to the Hermitian symplectic form~$\omega$.
But $\cV_{AB}$ is maximal isotropic if and only if  $\rnk(A\,B)=n$  and  $AB^*=BA^*$.

Suppose now that $Q$ is different from zero.   In view of Proposition~\ref{PropYasymp}, the vectors $\phi(a)$ and $\phi ^{[1]}(a)$ are well defined for any $\phi\in \cW_Q$. Moreover the form $\Omega$ can be extended to the space $\cW_Q$:
\begin{equation*}
  \Omega(\phi, \psi) =\sum_{k=1}^n\big(\phi_{i}(a) \overline{\psi_k}^{[1]}(a)-\phi_k^{[1]}(a) \overline{\psi_k}(a)\big),\quad \phi, \psi \in\cW_Q.
\end{equation*}
Indeed, let $\cG_\eps=\cG\cap\{x\colon |x-a|>\eps\}$ and let $a^\eps_k$ be the intersection point of the edge $e_k$ with the sphere $\{x\colon |x-a|=\eps\}$.
Then
\begin{multline*}
  \Omega(\phi, \psi)
  =\lim_{\eps\to 0}\big((\phi'',\psi)_{L_2(\cG_\eps)}-(\phi, \psi'')_{L_2(\cG_\eps)}\big)\\
\begin{aligned}
   &=\lim_{\eps\to 0}\sum_{k=1}^n\big(\phi_k(a^\eps_k) \overline{\psi_k'}(a^\eps_k)-\phi_k'(a^\eps_k) \overline{\psi_k}(a^\eps_k)\big)\\
  &=\sum_{k=1}^n\Big(\phi_k(a) \lim_{\eps\to 0}\big(\overline{\psi_k'}(a^\eps_k)-q_k \overline{\psi_k}(a)\ln|a^\eps_k-a|\big)\\
  &\hskip22pt-\lim_{\eps\to 0}\big(\phi_k'(a^\eps_k)-q_k \phi_k(a)\ln|a^\eps_k-a|\big)\; \overline{\psi_k}(a)\Big)
\end{aligned}
\\
  = \sum_{k=1}^n\big(\phi_{i}(a) \overline{\psi_k}^{[1]}(a)-\phi_k^{[1]}(a) \overline{\psi_k}(a)\big),
\end{multline*}
since $a^\eps_k\to a$ as $\eps\to 0$.  Hence $\Omega(\phi, \psi)=\omega(\langle\phi\rangle_Q,\langle\psi\rangle_Q)$,
where $\langle\;\rangle_Q\colon \cW_Q\to \Comp^{2n}$ is  the  linear map given as $\langle\phi\rangle_Q=(\phi(a), \phi^{[1]}(a))^T$.
Suppose that $\cV_{AB}^Q$ is the linear subspace in $\Comp^{2n}$ consisting of all vectors $\langle\phi\rangle_Q$ which satisfy the condition  $A\phi(a)+B\qdr{\phi}=0$. Therefore the operator $H$ is self-adjoint iff $\cV_{AB}^Q$ is maximal isotropic with respect to $\omega$ and iff $\rnk(A\,B)=n$  and  $AB^*$ is self-adjoint.
\end{proof}

\begin{lem}\label{SelfadjointnessH}
The operator $\cH$ from Theorem~\ref{MainTheorem} is self-adjoint in $L_2(\cG)$.
\end{lem}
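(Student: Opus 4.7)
The plan is to put the vertex conditions defining $\dmn\cH$ into the form $A\phi(a)+B\phi^{[1]}(a)=0$ with square $n\times n$ matrices $A,B$, and then invoke Theorem~\ref{OprQisSelfAdjoint}; it only remains to verify that the $n\times 2n$ matrix $(A\,B)$ has rank $n$ and that $AB^*$ is Hermitian.

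First I would pick an $(n-r)\times n$ matrix $\Pi$ whose rows form an orthonormal basis of $\cR_V^\perp\subset\Comp^n$; then $\phi(a)\in\cR_V$ is equivalent to $\Pi\phi(a)=0$. Set
\[
  A=\begin{pmatrix}\Pi\\ ML^+\end{pmatrix},\qquad
  B=\begin{pmatrix}0\\ -L^*\end{pmatrix}.
\]
The block equation $A\phi(a)+B\phi^{[1]}(a)=0$ decouples into the two conditions $\Pi\phi(a)=0$ and $ML^+\phi(a)-L^*\phi^{[1]}(a)=0$ appearing in \eqref{DomcH}, so the domain of $\cH$ coincides with the one described in Theorem~\ref{OprQisSelfAdjoint} for this choice of $A,B$.

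Next, to verify the rank condition I would examine a vanishing linear combination of the rows of $(A\,B)$: looking at the right-hand block first, one obtains a combination of the rows of $-L^*$ equal to zero, and since $\ell$ is injective (so $L$ has full column rank $r$ and $L^*$ has full row rank $r$), the coefficients corresponding to the bottom block vanish. What remains is a combination of the rows of $\Pi$, which is zero only trivially since those rows are independent by construction. Hence $\rnk(A\,B)=n$. For Hermiticity, direct computation gives
\[
  AB^{*}=\begin{pmatrix}\Pi\\ ML^+\end{pmatrix}\begin{pmatrix}0 & -L\end{pmatrix}
  =\begin{pmatrix}0 & -\Pi L\\ 0 & -ML^+L\end{pmatrix}
  =\begin{pmatrix}0 & 0\\ 0 & -M\end{pmatrix},
\]
using that the rows of $\Pi$ lie in $\cR_V^\perp$ (so $\Pi L=0$ because the columns of $L$ span $\cR_V$) together with the left-inverse identity $L^+L=E$ from \eqref{PseudoinverseL}. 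Since $M$ is Hermitian by its very definition ($\overline{m_{ji}}=m_{ij}$), so is $AB^{*}$, and Theorem~\ref{OprQisSelfAdjoint} yields the self-adjointness of $\cH$.

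Nothing here should be genuinely hard; the only delicate point is the injectivity of $\ell$ (needed for $L$ to have full column rank, which in turn is what guarantees both the maximal rank of $(A\,B)$ and the cancellation $ML^+L=M$), and this is already recorded in the discussion preceding \eqref{VectorsLk}. Everything else is routine block matrix algebra.
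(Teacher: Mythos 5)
Your proposal is correct and follows essentially the same route as the paper: the paper also rewrites the domain as $A\phi(a)+B\qdr{\phi}=0$ with $A=\bigl(\begin{smallmatrix}R^*\\ ML^+\end{smallmatrix}\bigr)$, $B=\bigl(\begin{smallmatrix}0\\ -L^*\end{smallmatrix}\bigr)$ (your $\Pi$ playing the role of $R^*$), checks maximal rank via the full row ranks of the two blocks, and computes $AB^*$ using $R^*L=0$ and $ML^+L=M$ to reduce Hermiticity of $AB^*$ to that of $M$, then invokes Theorem~\ref{OprQisSelfAdjoint}.
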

\begin{proof}
Recall that the vectors $l_1,\dots,l_r$ given by \eqref{VectorsLk} are the columns of  $L$ that form a basis in $\cR_V$. Let $l_{r+1},\dots,l_n$ be the basis of $\cR_V^\bot$ and let the $n\times (n-r)$ matrix $R$ be formed from these vectors as columns. Now the condition $\phi(a)\in \cR_V$ in \eqref{DomcH} can be written as $R^*\phi(a)=0$, since $\cR_V=\ker R^*$. Then
\begin{equation*}
   \dmn \cH =\big\{\phi\in \cW_Q\colon A\phi(a)+B\qdr{\phi}=0\big\},
 \end{equation*}
where the matrix $(A\,B)$ has the $2\times 2$ block-matrix form

\begin{center}
\includegraphics[scale=0.3]{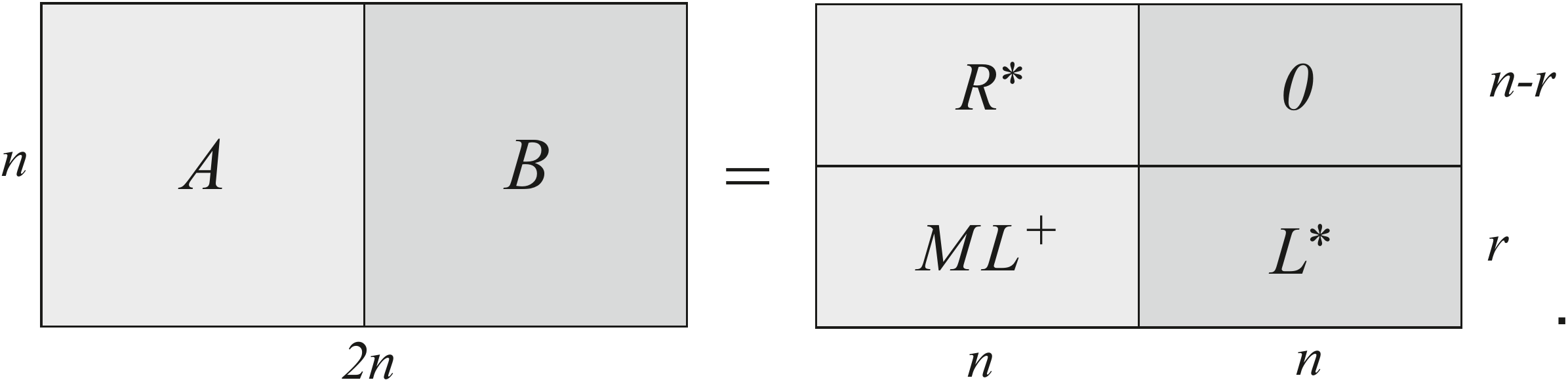}
\end{center}

\noindent
The matrix has the maximal rank because the rows of its blocks $R^*$ and $L^*$ form a basis of $\Comp^{n}$. Next, we have
\begin{center}
  \includegraphics[scale=0.3]{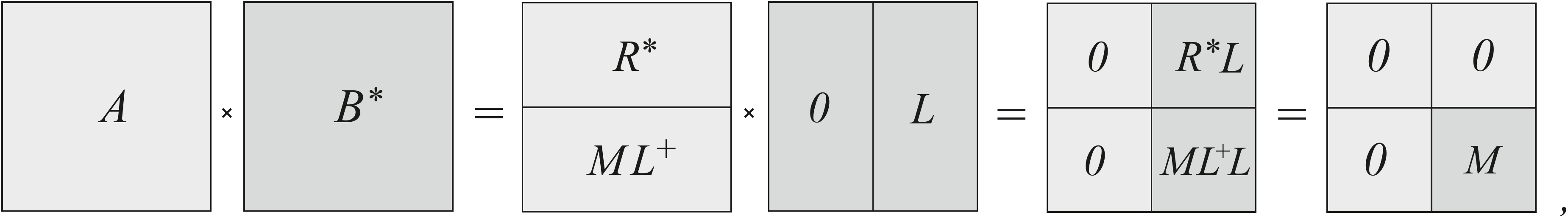}
\end{center}
since $ML^+L=M$ and $R^*L=0$, by construction. Hence the matrix $AB^*$ is self-adjoint, because of $M^*=M$. Therefore, in view of Theorem~\ref{OprQisSelfAdjoint}, the operator $\cH$ is self-adjoint.
\end{proof}

\begin{lem}
The operator $\cH$ and condition \eqref{ConvergenceCnd} in Theorem~\ref{MainTheorem} do not depend on the choice of  basis in  $\Psi_V$.
\end{lem}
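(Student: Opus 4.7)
The plan is to exploit the linearity of every object in sight and to track how $L$, $L^+$, $L^*$, $M$, $N$ transform under a change of basis in $\Psi_V$. Everything should boil down to left multiplication by an invertible matrix, which preserves kernels.

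First I would introduce the basis change. Let $\psi^{(1)},\dots,\psi^{(r)}$ and $\tilde\psi^{(1)},\dots,\tilde\psi^{(r)}$ be two bases of $\Psi_V$, related by an invertible $r\times r$ matrix $T=(T_{ij})$ via $\tilde\psi^{(j)}=\sum_{i=1}^{r}T_{ij}\psi^{(i)}$. All tildes below refer to objects built from the new basis.

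Next I would derive the transformation rules. Since $\ell\colon\Psi_V\to\Comp^n$ is linear, the columns of $\tilde L$ satisfy $\ell(\tilde\psi^{(j)})=\sum_i T_{ij}\ell(\psi^{(i)})$, so $\tilde L=LT$ and $\tilde L^{*}=T^{*}L^{*}$. Because $L$ has full column rank, so does $LT$, and a short computation using \eqref{PseudoinverseL} gives
\begin{equation*}
\tilde L^{+}=(\tilde L^{*}\tilde L)^{-1}\tilde L^{*}=(T^{*}L^{*}LT)^{-1}T^{*}L^{*}=T^{-1}(L^{*}L)^{-1}L^{*}=T^{-1}L^{+}.
\end{equation*}
Expanding $\tilde m_{ij}=\int_{\cG}U\tilde\psi^{(j)}\overline{\tilde\psi^{(i)}}\,d\cG$ using the substitution rule and the antilinearity in the second slot, one obtains $\tilde M=T^{*}MT$; the identical computation gives $\tilde N=T^{*}NT$. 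The matrix $K$ is intrinsic to the potential $Q$ and does not depend on the basis.

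Then I would verify the two invariances. For the convergence condition \eqref{ConvergenceCnd}:
\begin{equation*}
\tilde N\tilde L^{+}-\tilde L^{*}K = T^{*}NT\cdot T^{-1}L^{+}-T^{*}L^{*}K = T^{*}\bigl(NL^{+}-L^{*}K\bigr).
\end{equation*}
Since $T^{*}$ is invertible, $\ker(\tilde N\tilde L^{+}-\tilde L^{*}K)=\ker(NL^{+}-L^{*}K)$, so the inclusion $\cR_V\subset\ker(NL^{+}-L^{*}K)$ is basis-independent. For the domain of $\cH$ in \eqref{DomcH}, the condition $\phi(a)\in\cR_V$ refers only to the subspace $\cR_V=\Im\ell$, which is intrinsic. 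For the remaining condition, the same calculation yields
\begin{equation*}
\tilde M\tilde L^{+}\phi(a)-\tilde L^{*}\qdr\phi = T^{*}\bigl(ML^{+}\phi(a)-L^{*}\qdr\phi\bigr),
\end{equation*}
and invertibility of $T^{*}$ shows that the vanishing of the left-hand side is equivalent to the vanishing of $ML^{+}\phi(a)-L^{*}\qdr\phi$. Hence $\dmn\cH$ is the same for both bases.

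No step looks genuinely hard; the only point requiring minor care is the computation of $\tilde L^{+}$, where one must use that $(L^{*}L)^{-1}$ exists (as already recorded after \eqref{PseudoinverseL}) and that the transpose-conjugate of a product reverses order. Once the transformation rules $\tilde L=LT$, $\tilde L^{+}=T^{-1}L^{+}$, $\tilde L^{*}=T^{*}L^{*}$, $\tilde M=T^{*}MT$, $\tilde N=T^{*}NT$ are in hand, the invariance is an immediate algebraic consequence.
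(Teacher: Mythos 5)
Your proposal is correct and follows essentially the same route as the paper: both derive the transformation rules $\tilde L=LT$, $\tilde M=T^{*}MT$, $\tilde N=T^{*}NT$, $\tilde L^{+}=T^{-1}L^{+}$ and observe that the relevant expressions pick up an invertible left factor $T^{*}$, which preserves kernels and zero sets. The only (harmless) difference is that you verify $\tilde L^{+}=T^{-1}L^{+}$ directly from the formula $L^{+}=(L^{*}L)^{-1}L^{*}$, whereas the paper simply invokes $(LX)^{+}=X^{-1}L^{+}$.
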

\begin{proof}
Suppose the basis $\{\psi^{(k)}\}_{k=1}^r$ in $\Psi_V$  is mapped onto another one $\{\hat{\psi}^{(k)}\}_{k=1}^r$ by a nondegenerate matrix $X$. In the new basis, we have $\hat{L}=LX$, $\hat{M}=X^*MX$  and $\hat{N}=X^*NX$.
Since $X^+=X^{-1}$, we derive
\begin{align}
&\begin{aligned}\nonumber
   \hat{N}\hat{L}^+-\hat{L}^*K=X^*&NX(LX)^+-(LX)^*K\\
   &=X^*NXX^{-1}L^+ -X^* L^*K=X^*\big(NL^+ -L^*K\big),
\end{aligned}
 \\
&\begin{aligned}\label{MLX}
   \hat{M}&\hat{L}^+ \phi(a)-\hat{L}^*\qdr\phi=X^*MX(LX)^+ \phi(a)-(LX)^*\qdr\phi\\
   &=X^*MXX^{-1}L^+ \phi(a)-X^* L^*\qdr\phi=X^*\big(ML^+ \phi(a)-L^*\qdr\phi\big).
\end{aligned}
\end{align}
Hence,  $\ker(\hat{N}\hat{L}^+-\hat{L}^*K) =\ker(NL^+-L^*K)$ and
the left-hand side of \eqref{MLX} is zero if and only if $ML^+ \phi(a)-L^*\qdr\phi=0$. Note that
the condition $\phi(a)\in \cR_V$ in \eqref{DomcH} is also invariant under such a transformation.
\end{proof}

\section{Proof of Theorems~\ref{MainTheorem} and \ref{TheoremEst}}\label{Sect3}
\subsection{Formal construction of  the limit operator}
From now on, we suppose that the vertex $a$ is situated at the origin, and  each edge $e_k$ is a ray on the  plane with beginning at $a=0$. We also assume that the supports of the potentials $V$ and $U$ lie in the unit ball $\{x\colon |x|\leq 1\}$, which involves no loss of generality. Let $\vec{\imath}_k$ be the unit vector along the ray $e_k$. Then
\begin{equation}\label{Parametrization}
 x(\tau)=\tau \vec{\imath}_k, \qquad \tau\in[0,+\infty),
\end{equation}
is a parametrization of $e_k$. It will cause no confusion if we use $\phi_k(x)$ as well as $\phi_k(\tau)$ to designate the value of $\phi$ at a point $x\in e_k$.

Set $\Gamma_\eps=\cG\cap\{x\colon |x|\leq \eps\}$ and $\cG_\eps=\cG\setminus\Gamma_\eps$. Let $\Gamma$ be the dilatation of the localized graph $\Gamma_\eps$ by the factor $1/\eps$ (see Fig.~\ref{GammaEps}). We will denote by $\partial\Gamma$ the boundary of the star graph, which consists of all vertices $\{a_1,\dots,a_n\}$ of $\Gamma$ except the central one. Let $\cK(\Gamma)$
be the space of $W^2_{2}$-functions on $\Gamma$ subject to the Kirchhoff conditions at the vertex~$a=0$.

\begin{lem}\label{LemNminus1}
  The dimension of $\Psi_V$ cannot exceed $n-1$.
\end{lem}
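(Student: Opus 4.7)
The plan is to argue by contradiction, exploiting the injectivity of the map $\ell\colon\Psi_V\to\Comp^n$ that has already been established in the paper just before Definition of $\cR_V$. Since $\ell$ is injective, we automatically have $\dim\Psi_V\le n$, and it suffices to rule out the possibility $\dim\Psi_V=n$.

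Suppose, for contradiction, that $\dim\Psi_V=n$. Then $\ell$ is a bijection onto $\Comp^n$, so we may choose a half-bound state $\psi\in\Psi_V$ such that $\ell(\psi)=(1,0,\dots,0)^T$. That is, $\psi_1^\infty=1$ and $\psi_k^\infty=0$ for every $k=2,\dots,n$. Recall the observation made immediately before the definition of $\cR_V$: if $\psi_k^\infty=0$, then $\psi_k$ vanishes identically outside $\supp V$, and consequently $\psi_k\equiv 0$ on the entire edge $e_k$ by the uniqueness theorem for the linear ODE $-\psi_k''+V_k\psi_k=0$. Hence $\psi_k\equiv 0$ on $e_k$ for all $k\ge 2$, and in particular $\psi_k(a)=0$ and $\psi_k'(a)=0$ for $k=2,\dots,n$.

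Now invoke the Kirchhoff conditions satisfied by $\psi$ at the vertex $a$. Continuity forces $\psi_1(a)=\psi_2(a)=0$, and the flux condition yields
\begin{equation*}
\psi_1'(a)=-\sum_{k=2}^n\psi_k'(a)=0.
\end{equation*}
Thus $\psi_1$ is a solution of the second-order linear equation $-\psi_1''+V_1\psi_1=0$ on $e_1$ with vanishing Cauchy data at $a$. By the uniqueness of the initial-value problem, $\psi_1\equiv 0$ on $e_1$, which contradicts $\psi_1^\infty=1$. Therefore $\dim\Psi_V\le n-1$.

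The argument is essentially immediate once one uses the injectivity of $\ell$ together with the ODE-uniqueness observation already recorded by the authors; no genuine obstacle is anticipated. The only mild subtlety is that the argument tacitly assumes $n\ge 2$, so that there are "other" edges on which $\psi$ must vanish in order to produce the Cauchy data needed for the contradiction on $e_1$. This is consistent with the star-graph setting considered throughout the paper.
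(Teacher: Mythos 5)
Your proof is correct, and it takes a genuinely different route from the paper's. The paper restricts a half-bound state to the compact graph $\Gamma$ (where it must satisfy Neumann conditions $\psi'|_{\partial\Gamma}=0$ because $\psi$ is constant outside $\supp V$), identifies $\Psi_V$ isomorphically with the zero-eigenspace of the resulting Kirchhoff--Neumann problem on $\Gamma$, and then quotes the known bound of Kac and Pivovarchik that an eigenvalue of such a star-graph problem has multiplicity at most $n-1$. You instead argue directly: injectivity of $\ell$ gives $\dim\Psi_V\le n$, and if equality held, $\cR_V=\Comp^n$ would contain $(1,0,\dots,0)^T$, forcing a half-bound state that vanishes identically on $e_2,\dots,e_n$; the Kirchhoff conditions then annihilate the Cauchy data of $\psi_1$ at $a$, so $\psi_1\equiv 0$ by ODE uniqueness, contradicting $\psi_1^\infty=1$. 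Your argument is more elementary and self-contained (no external citation needed), and in fact shows slightly more, namely that $\cR_V$ contains no vector supported on a single coordinate. What the paper's route buys in exchange is the identification $\ell(\psi)=\psi|_{\partial\Gamma}$ and the restriction isomorphism onto the nullspace of \eqref{NeumannPrb}, both of which are reused later in the asymptotic construction, as well as the observation that the bound $n-1$ is attained (e.g.\ for $V=-\pi^2/4$ on $\Gamma$), which your argument does not address but which the lemma does not claim. Your caveat that $n\ge 2$ is needed is correct and applies equally to the paper's proof.
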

\begin{proof}
A half-bound state $\psi$ is a non-trivial solution of the equation
$-\psi'' + V\psi=0$ on $\cG$ subject to the Kirchhoff conditions at $a=0$.
Since $\psi$ is bounded on the graph, it is  constant  on each edge outside the ball $\{x\colon |x|\leq 1\}$. Then the restriction of $\psi$ to the graph $\Gamma$ solves the boundary value problem
\begin{equation}\label{NeumannPrb}
  -\psi'' + V\psi=0\quad \text{on } \Gamma,\quad \psi\in \cK(\Gamma), \quad  \psi'=0\text{ on }\partial\Gamma,
\end{equation}
where the condition $\psi'|_{\partial\Gamma}=0$ means that   $\psi_1'(a_1)=0,\dots,\psi_n'(a_n)=0$.
This restriction is an eigenfunction of the problem that corresponds to the eigenvalue $\lambda=0$. But the multiplicity of an eigenvalue of \eqref{NeumannPrb} cannot exceed $n-1$ \cite{KacPivovarchik2011}. In addition, the multiplicity $n-1$ is realized for the potential $V(t)=-\pi^2/4$, $t\in \Gamma$.  It remains to note that the restriction operator is an isomorphism from $\Psi_V$ onto the nullspace of \eqref{NeumannPrb}.
\end{proof}

From the proof, it follows that
\begin{equation}\label{LpsiIspsionboundary}
  \ell(\psi)=\psi|_{\partial\Gamma}
\end{equation}
for any $\psi\in\Psi_V$, since $\lim_{|x|\to\infty}\psi_k(x)=\psi_k(a_k)$.
Here and subsequently, we use the same letter for a half-bound state and its restriction to $\Gamma$.

\begin{figure}[t]
\centering
  \includegraphics[scale=0.6]{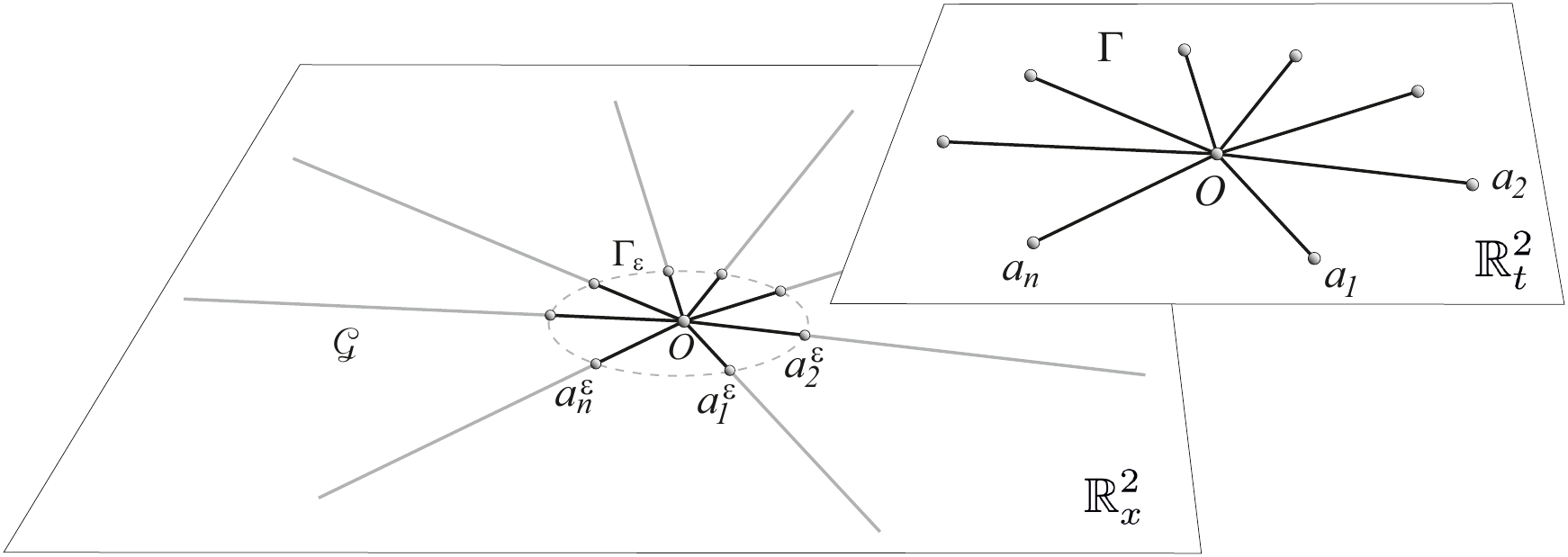}
  \caption{The graph $\Gamma$ is a dilatation of  $\Gamma_\eps$}\label{GammaEps}
\end{figure}

For $f\in L^2(\cG)$ and $\zeta\in \Comp\setminus\Real$, we set $y_\eps=(H_\eps-\zeta)^{-1}f$.
The function $y_\eps$ is a unique solution of  the equation
\begin{equation}\label{ResolventDiffEq}
 -y_\eps''+\big(Q_\eps(x)+\eps^{-2}V(\eps^{-1} x)+\eps^{-1}U(\eps^{-1} x)\big)y_\eps=\zeta y_\eps+f\quad\text{on } \cG
\end{equation}
that belongs to $\dmn H_\eps$.
We look for the  asymptotics of $y_\eps$, as $\eps\to 0$, of the form
\begin{equation}\label{AsymptoticsYepsC1}
 y_\eps(x)\sim
  \begin{cases}
      y(x), & \text{if } x\in \cG_\eps,\\
      u\xe+\eps\ln\eps\, v\xe+ \eps w\xe, & \text{if }  x\in \Gamma_\eps,
 \end{cases}
\end{equation}
where $u$, $v$ and $w$ satisfy the Kirchhoff conditions at the origin.

Since $y_\eps$ is a $C^1$-function on the edges, we glue the approximations on $\partial \Gamma_\eps$ with the conditions
\begin{equation}\label{GlueApprx}
\begin{aligned}
  y(a_k^\eps)&=u(a_k)+\eps\ln\eps \,v(a_k)+ \eps w(a_k)+o(\eps),\\
  y'(a_k^\eps)&=\eps^{-1}u'(a_k)+\ln\eps \,v'(a_k)+ w'(a_k)+o(1),
\end{aligned}
\end{equation}
for all $k=1,\dots,n$. All derivatives are taken in the direction of growth of the parameter $\tau$ in \eqref{Parametrization}.  Here $a_k^\eps=\eps \vec{\imath}_k$ is a boundary vertex of $\Gamma_\eps$ (see Fig.~\ref{GammaEps}).
In view of Proposition~\ref{PropYasymp}, we have $y'(a_k^\eps)=q_k y_k(0)\ln\eps+y_k^{[1]}(0)+o(1)$, as $\eps\to 0$.
Combining this  asymptotics and \eqref{GlueApprx}, we discover $u(a_k)=y_k(0)$, $u'(a_k)=0$, $v'(a_k)=q_ky_k(0)$ and $w'(a_k)=y_k^{[1]}(0)$ for all $a_k\in \partial \Gamma$. Then
\begin{equation}\label{CConds}
u|_{\partial \Gamma}=y(0), \quad u'|_{\partial \Gamma}=0,\quad v'|_{\partial \Gamma}=Ky(0),\quad w'|_{\partial \Gamma}=y^{[1]}(0).
\end{equation}
Since the potential $W_\eps$ is localized on $\Gamma_\eps$ and this graph shrinks to a point as $\eps\to 0$,    $y$ must solve the equation
\begin{equation}\label{AsymptoticsEq}
-y''+Q y=\zeta y+f \quad\text{on } \cG.
\end{equation}
To find coupling conditions for $y$ at $a=0$, we will look closely at equation \eqref{ResolventDiffEq} on $\Gamma_\eps$.  We rewrite it  in terms of the new variables $t=\eps^{-1}x$. Then
\begin{equation}\label{ResolventDiffEqT}
 -u_\eps''+\big(V(t)+\eps\ln\eps\,\kappa(t)+\eps U(t)\big)u_\eps=\eps^2\zeta u_\eps+\eps^2f\quad  \text{on } \Gamma,
\end{equation}
where $u_\eps(t)=y_\eps(\eps t)$. Substituting \eqref{AsymptoticsYepsC1} into \eqref{ResolventDiffEqT} and applying \eqref{CConds} we yield
\begin{align}\label{ProblemU}
  -&u''+Vu=0\quad\text{on }\Gamma,  &&u\in \cK(\Gamma), \quad u'=0\quad\text{on }\partial\Gamma;&\\\label{ProblemV}
  -&v''+Vv=-\kappa u\quad\text{on }\Gamma,  &&v\in \cK(\Gamma), \quad v'=Ky(0)\quad\text{on }\partial\Gamma;&\\\label{ProblemW}
  -&w''+Vw=-U u\quad\text{on }\Gamma,  &&w\in \cK(\Gamma), \quad w'=\qdrz{y}\quad\text{on }\partial\Gamma.&
\end{align}

If $u$ is a non-trivial solution of \eqref{ProblemU}, then it is the restriction of a half-bound state to  $\Gamma$  (cf. the proof of Lemma~\ref{LemNminus1}). We set
\begin{equation}\label{Urepres}
  u(x)=\alpha_1\psi^{(1)}(x)+\dots+\alpha_r\psi^{(r)}(x)\quad\text{for }x\in\Gamma,
\end{equation}
where $\{\psi^{(k)}\}_{k=1}^r$ is a basis of $\Psi_V$. Of course, $u=0$, if $\Psi_V$ is trivial.
Then \eqref{LpsiIspsionboundary} and the first equality in \eqref{CConds} yield
\begin{equation}\label{y0asL}
  y(0)=\alpha_1\psi^{(1)}|_{\partial\Gamma}+\dots+\alpha_r\psi^{(r)}|_{\partial\Gamma}=
  \alpha_1\ell(\psi^{(1)})+\dots+\alpha_r\ell(\psi^{(r)}).
\end{equation}
By \eqref{VectorsLk}, the vectors $l_k=\ell(\psi^{(k)})$ are the columns of matrix $L$. Hence $y(0)=L\alpha$, where
$\alpha=(\alpha_1,\dots,\alpha_r)^T$.
Applying the left inverse $L^+$, we get
\begin{equation}\label{AlphaIsPMy}
\alpha=L^+ y(0).
\end{equation}
Since $\ell(\Psi_V)=\cR_V$, it follows from \eqref{y0asL}  that $y(0)\in\cR_V$. The condition can be written as
\begin{equation}\label{Set1}
  R^* y(0)=0,
\end{equation}
where  $R$ is defined in the proof of Lemma~\ref{SelfadjointnessH}.

\begin{prop}
  Given $g\in L^2(\Gamma)$ and $h\in\Comp$, the problem
  \begin{equation}\label{NHGproblem}
    -\phi''+V\phi=g\quad\text{on }\Gamma, \quad \phi\in \cK(\Gamma), \quad \phi'=h\quad\text{on }\partial\Gamma
  \end{equation}
has a solution if and only if
\begin{equation}\label{CompactibilityCond}
(g,\psi)_{L^2(\Gamma)}+\langle h, \ell(\psi)\rangle_{\Comp^n}=0 \qquad \text{for all \ } \psi\in \Psi_V.
\end{equation}
 If $\Psi_V$ is trivial, then \eqref{NHGproblem} admits a unique solution for all $g$ and $h$.
\end{prop}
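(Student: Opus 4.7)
The plan is to cast the boundary value problem as a Fredholm equation for a self-adjoint operator with compact resolvent on the finite graph $\Gamma$ and to read off the solvability condition by integration by parts. First I would introduce the self-adjoint operator $T$ on $L^2(\Gamma)$ defined by $T\phi=-\phi''+V\phi$ with domain
\begin{equation*}
  \dmn T=\{\phi\in W_2^2(\Gamma):\phi\in\cK(\Gamma),\,\phi'=0\text{ on }\partial\Gamma\}.
\end{equation*}
Since $\Gamma$ is compact and $V$ is bounded, $T$ has compact resolvent, its spectrum is discrete, and its range equals $(\ker T)^\perp$. I would then identify $\ker T$ with $\Psi_V$: every half-bound state restricts to an element of $\ker T$ by (the proof of) Lemma~\ref{LemNminus1}, and conversely any $\psi\in\ker T$ extends to $\cG$ as a bounded Kirchhoff solution of $-\psi''+V\psi=0$ by declaring $\psi_k$ to be the constant $\psi_k(a_k)$ beyond $a_k$, which is admissible because $\psi'(a_k)=0$ and $V$ vanishes outside $\Gamma$.

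Next I would reduce the inhomogeneous Neumann data to homogeneous ones via a lift. Pick any $\chi\in W_2^2(\Gamma)$ vanishing in a neighbourhood of the central vertex and satisfying $\chi'(a_k)=h_k$ for $k=1,\dots,n$; such $\chi$ automatically fulfils the Kirchhoff conditions at the center. Writing $\phi=\tilde\phi+\chi$ converts the problem into $T\tilde\phi=g+\chi''-V\chi$ with $\tilde\phi\in\dmn T$, and by the Fredholm alternative this is solvable if and only if
\begin{equation*}
  (g+\chi''-V\chi,\psi)_{L^2(\Gamma)}=0\qquad\text{for every }\psi\in\ker T.
\end{equation*}

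The key computation is then a double integration by parts on each edge. Because $T\psi=0$, the bulk contribution $(\chi,\overline{T\psi})_{L^2(\Gamma)}$ vanishes; the boundary terms at the central vertex cancel since $\chi$ is zero there and $\psi$ satisfies Kirchhoff; only the outer vertices $a_k$ remain. Using $\psi'(a_k)=0$, $\chi'(a_k)=h_k$, and the identification $\ell(\psi)=\psi|_{\partial\Gamma}$ from \eqref{LpsiIspsionboundary}, one finds
\begin{equation*}
  (-\chi''+V\chi,\psi)_{L^2(\Gamma)}=-\sum_{k=1}^{n}h_k\overline{\psi_k(a_k)}=-\langle h,\ell(\psi)\rangle_{\Comp^n},
\end{equation*}
so the Fredholm criterion reduces to $(g,\psi)_{L^2(\Gamma)}+\langle h,\ell(\psi)\rangle_{\Comp^n}=0$, as required. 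When $\Psi_V=\{0\}$ we have $\ker T=0$, hence $T$ is boundedly invertible and a unique solution exists for every $(g,h)$. The only delicate point I anticipate is the sign bookkeeping in the two integrations by parts (which depends on the outward orientation of the derivatives along each $e_k$), together with the verification that the criterion does not depend on the choice of lift $\chi$; the latter is automatic since two admissible lifts differ by an element of $\dmn T$, whose image under $T$ lies in $(\ker T)^\perp$.
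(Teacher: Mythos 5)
Your proof is correct and follows essentially the same route as the paper: the Fredholm alternative for the self-adjoint operator $S\phi=-\phi''+V\phi$ with Kirchhoff conditions at the center and Neumann conditions on $\partial\Gamma$, with the compatibility condition extracted by integration by parts against $\bar\psi$ using \eqref{LpsiIspsionboundary}. Your added details (the lift $\chi$, the identification of $\ker T$ with the restrictions of $\Psi_V$, and the independence of the choice of lift) are all sound and merely make explicit what the paper leaves implicit.
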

\begin{proof}
The statement is a simple consequence of Fredholm’s alternative for
the self-adjoint operator $S\phi=-\phi''+V\phi$ on $\Gamma$, $\dmn S=\{\phi\in\cK(\Gamma)\colon \phi'|_{\partial \Gamma }=0\}$. Condition \eqref{CompactibilityCond} can be easily obtained by multiplying the equation in \eqref{NHGproblem} by  $\bar{\psi}$ and integrating by parts with the use of  the vertex conditions and \eqref{LpsiIspsionboundary}.
\end{proof}

It is enough to check  \eqref{CompactibilityCond} only on the basis vectors:
\begin{equation*}
 (g,\psi^{(k)})_{L^2(\Gamma)}+\langle h, \ell(\psi^{(k)})\rangle_{\Comp^n}=0,\qquad k=1,\dots,r.
\end{equation*}
Applying these solvability conditions to problem \eqref{ProblemV}, we obtain
\begin{equation*}
  -\sum_{j=1}^r \alpha_j(\kappa\psi^{(j)},\psi^{(k)})_{L^2(\Gamma)}+\langle Ky(0), \ell(\psi^{(k)})\rangle_{\Comp^n}=0,\qquad k=1,\dots,r.
\end{equation*}
This collection of equalities can be written as $N\alpha-L^*Ky(0)=0$. Finally, we have
\begin{equation}\label{Set2}
  (NL^+-L^*K)y(0)=0,
\end{equation}
because of \eqref{AlphaIsPMy}.
Similarly, we obtain the solvability condition for  \eqref{ProblemW}:
\begin{equation}\label{Set3}
   ML^+ y(0)-L^*\qdrz y=0.
\end{equation}

We have obtained three sets \eqref{Set1}, \eqref{Set2} and \eqref{Set3} of conditions for the vectors $y(0)$ and $y'(0)$. To correctly define the vertex couplings at $x=0$,  we must select only $n$ linearly independent conditions among them. In view of Lemma~\ref{SelfadjointnessH}, sets \eqref{Set1} and \eqref{Set3} form exactly such a collection, so the vectors $y(0)$ and $y'(0)$ must satisfy them. On the other hand,  the number of linearly independent conditions in \eqref{Set2} is equal to the rank of $NL^+-L^*K$. This rank varies depending on regularizations of the Coulomb potential, i.e. on the function $\kappa$ and the values $q_1,\dots,q_n$. To fulfill \eqref{Set2}, we impose the additional constraint
\begin{equation}\label{ConvergenceCndInProof}
 \ker R^* \subset \ker (NL^+-L^*K).
\end{equation}

Therefore, at least formally, we have that
the leading term $y$ of asymptotics \eqref{AsymptoticsYepsC1} must solve the problem
\begin{equation}\label{ProblemLimitGen}
  -y''+Q y=\zeta y+f \text{ on } \cG, \quad
  y(0)\in\cR_V, \quad ML^+ y(0)-L^*\qdrz y=0,
\end{equation}
provided \eqref{ConvergenceCndInProof} holds. If $\cR_V=\{0\}$, the problem reduces  to the following one
\begin{equation}\label{ProblemDirectSum}
  -y''+Q y=\zeta y+f \text{ on } \cG, \quad y(0)=0.
\end{equation}
Given $f\in L^2(\cG)$ and $\zeta\in \Comp\setminus\Real$, both the problems admit a unique solution.

Then $u$ can be defined by \eqref{Urepres} with $\alpha$ given by \eqref{AlphaIsPMy}. Moreover, both problems \eqref{ProblemV} and \eqref{ProblemW} have solutions defined up to half-bound states. These solutions are uniquely determined by the conditions
\begin{equation}\label{VWOrtho}
  (v,\psi)_{L^2(\Gamma)}=0, \qquad (w,\psi)_{L^2(\Gamma)}=0\quad\text{for all }\psi\in\Psi_V.
\end{equation}

\begin{rem}\rm
Note that condition \eqref{ConvergenceCndInProof} is  necessary for our formal construction. The fact is that any vector $\xi\in \ker R^*$ can be realized as $y(0)$ for some solution $y$ of \eqref{ProblemLimitGen}.  Since $L^*$ is full row rank, there exists a vector $\eta\in \Comp^n$ satisfying the equality $L^*\eta=ML^+ \xi$. Let us consider the function $\phi\in \cW_Q$ such that $\phi(0)=\xi$ and $\qdrz \phi=\eta$. If we set $f=-\phi''+Q\phi-\zeta \phi$, then $y=\phi$ is a solution of \eqref{ProblemLimitGen} and $\xi= y(0)$.  Thus, the condition $(NL^+-L^*K)\xi=0$ must hold for all $\xi\in \cR_V$.
\end{rem}

\subsection{Uniform bounds}\label{Sec32}
We will need some uniform estimates for the terms $y$, $u$, $v$ and $w$ in \eqref{AsymptoticsYepsC1} with respect to the right-hand side $f$.
We have actually derived that $y=(\cH-\zeta)^{-1}f$, where $\cH$ is the operator from Theorem~\ref{MainTheorem}.  Hence the estimate
\begin{equation}\label{EstU}
  \|y\|\leq c_1\|f\|
\end{equation}
holds. From now on,  $\|\,\cdot\,\|$ stands for the norm in $L^2(\cG)$.
The following results are taken from \cite[Lemma~2]{GolovatyJMP2019}, where the problem on the line ($n=2$) has been treated:
\begin{gather}
\label{ULnEst}
    \left\|y|_{\partial \Gamma_\eps}-y(0)\right\|_{\Comp^n}\leq C_1\eps|\ln\eps|\,\|f\|,
    \\
   \left\|y'|_{\partial \Gamma_\eps}-\ln\eps \,Ky(0)-\qdrz y\right\|_{\Comp^n}\leq C_2 \eps^{1/2}\,\|f\|,\\\label{UBEsts}
   \left\|y(0)\right\|_{\Comp^n}+\|\qdrz y \|_{\Comp^n}\leq C_3\|f\|.
\end{gather}
The constants $C_k$ do not depend on $f$ and $\eps$. Proof of  these inequalities is carried out separately on each edge and it does not depend on the number of edges.
\begin{prop}\label{PropUVW}
Let $u$ be the solution of \eqref{ProblemU} given by \eqref{Urepres} and \eqref{AlphaIsPMy} and let  $v$ and $w$  be the solutions of \eqref{ProblemV} and \eqref{ProblemW} satisfying \eqref{VWOrtho}. Then
\begin{equation*}
  \|u\|_{W_2^2(\Gamma)}+\|v\|_{W_2^2(\Gamma)}+\|w\|_{W_2^2(\Gamma)}\leq C\|f\|
\end{equation*}
for all $f\in L^2(\cG)$, where the constant $C$ is independent of $f$.
\end{prop}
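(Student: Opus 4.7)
The plan is to control $u$, $v$, $w$ in turn, always reducing the $\|f\|$-dependence to the a priori estimate \eqref{UBEsts}. For $u$ the bound is immediate: by \eqref{Urepres} and \eqref{AlphaIsPMy}, $u$ equals the linear combination $\sum_{j=1}^{r}(L^+y(0))_j\psi^{(j)}$ of the fixed basis of the finite-dimensional space $\Psi_V\subset W_2^2(\Gamma)$, so $\|u\|_{W_2^2(\Gamma)}\leq C\|L^+\|\,\|y(0)\|_{\Comp^n}\leq C'\|f\|$ thanks to \eqref{UBEsts}.

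The substantive part is $v$ and $w$. My plan is to establish, once and for all, a bounded solution operator for the auxiliary Neumann--Kirchhoff problem
\begin{equation*}
-\phi''+V\phi=g \text{ on } \Gamma,\quad \phi\in\cK(\Gamma),\quad \phi'|_{\partial\Gamma}=h,\quad (\phi,\psi)_{L^2(\Gamma)}=0 \text{ for all } \psi\in\Psi_V,
\end{equation*}
acting on data $(g,h)\in L^2(\Gamma)\times\Comp^n$ satisfying the compatibility condition \eqref{CompactibilityCond}. I would introduce the self-adjoint operator $S\phi=-\phi''+V\phi$ on $L^2(\Gamma)$ with $\dmn S=\{\phi\in\cK(\Gamma)\colon \phi'|_{\partial\Gamma}=0\}$; it has compact resolvent and, by the proof of Lemma~\ref{LemNminus1}, kernel exactly $\Psi_V$. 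Its restriction to $\Psi_V^\perp$ therefore admits a bounded inverse $S_0^{-1}$ whose range lies in $W_2^2(\Gamma)$ by edgewise elliptic regularity. Fixing also a continuous lift $\Lambda\colon\Comp^n\to W_2^2(\Gamma)\cap\cK(\Gamma)$ with $(\Lambda h)'|_{\partial\Gamma}=h$, one writes $\phi=\Lambda h+\tilde\phi$ so that $\tilde\phi$ solves a problem with homogeneous boundary data and right-hand side bounded in $L^2(\Gamma)$ by $\|g\|+|h|$; a final projection onto $\Psi_V^\perp$ produces a solution with $\|\phi\|_{W_2^2(\Gamma)}\leq C(\|g\|_{L^2(\Gamma)}+\|h\|_{\Comp^n})$.

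With this solution operator in hand, the desired bounds are routine applications. For \eqref{ProblemV} take $g=-\kappa u$, $h=Ky(0)$; the compatibility condition is exactly \eqref{Set2}, which holds by the construction of $y$ together with the standing assumption \eqref{ConvergenceCndInProof}, and $\|v\|_{W_2^2(\Gamma)}\leq C(\|u\|_{L^2(\Gamma)}+\|y(0)\|_{\Comp^n})\leq C'\|f\|$ follows from the bound on $u$ above, boundedness of $\kappa$, and \eqref{UBEsts}. Problem \eqref{ProblemW} is handled identically with $g=-Uu$, $h=\qdrz y$; here the compatibility condition is \eqref{Set3}, which is built into the domain of $\cH$, boundedness of $U$ and the estimate \eqref{UBEsts} for $\qdrz y$ closing the argument.

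The single real obstacle is the boundedness of the pseudo-inverse $S_0^{-1}$: one needs that $0$ is isolated in the spectrum of $S$ (from compactness of the resolvent on the finite graph $\Gamma$) and that $S_0^{-1}$ maps $L^2$ into $W_2^2$ (edgewise elliptic regularity together with the Kirchhoff and Neumann matching at vertices). Both are classical for Schr\"odinger operators on compact metric graphs, and the same Fredholm setup has already appeared tacitly in the proof of Lemma~\ref{LemNminus1}.
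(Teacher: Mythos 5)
Your argument is correct and follows essentially the same route as the paper: $u$ is bounded through the representation $u=\sum_j(L^+y(0))_j\psi^{(j)}$ together with \eqref{UBEsts}, and $v$, $w$ are bounded by the a priori estimate $\|\phi\|_{W_2^2(\Gamma)}\leq C(\|g\|_{L^2(\Gamma)}+\|h\|_{\Comp^n})$ for the compatible Neumann--Kirchhoff problem normalized by \eqref{VWOrtho}. The only difference is that you spell out the Fredholm/elliptic-regularity justification of that solution-operator bound, which the paper simply invokes with unexplained constants.
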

\begin{proof}
In view of \eqref{Urepres}, \eqref{AlphaIsPMy} and \eqref{UBEsts}, we deduce
  \begin{align*}
    \|u\|_{W_2^2(\Gamma)}\leq \sum_{k=1}^r |\alpha_k|\|\psi^{(k)}&\|_{W_2^2(\Gamma)}\leq c_1\|\alpha \|_{\Comp^n}\\
    &=c_1\|L^+ y(0)\|_{\Comp^n}\leq c_2\|y(0)\|_{\Comp^n}\leq c_3\|f\|.
  \end{align*}
This bound gives us the uniform estimate for $v$:
\begin{equation*}
  \|v\|_{W_2^2(\Gamma)}\leq c_4(\|\kappa u\|_{L^2(\Gamma)}+\|Ky(0)\|_{\Comp^n})\leq
  c_5(\| u\|_{L^2(\Gamma)}+\|y(0)\|_{\Comp^n})\leq c_6\|f\|.
\end{equation*}
Similarly, repeated application of the bound for $u$ and \eqref{UBEsts} enables us to write
\begin{equation*}
  \|w\|_{W_2^2(\Gamma)}\leq c_7(\|U u\|_{L^2(\Gamma)}+\|\qdrz y\|_{\Comp^n})\leq c_8\|f\|,
\end{equation*}
which completes the proof.
\end{proof}
In order to improve asymptotics \eqref{AsymptoticsYepsC1}, we set
\begin{equation*}
 \hat{y}_\eps(x)=
  \begin{cases}
      y(x), & \text{if } x\in \cG_\eps,\\
      u\xe+\eps\ln\eps \,v\xe+ \eps w\xe+\eps^2 z_\eps\xe, & \text{if }  x\in \Gamma_\eps,
 \end{cases}
\end{equation*}
where $z_\eps$ is a solution of the problem
\begin{equation}\label{CPZeps}
  -z_\eps''+(V-\zeta)z_\eps=f(\eps t)\quad\text{on }\Gamma, \quad z_\eps\in \cK(\Gamma), \quad z_\eps=0\quad\text{on }\partial\Gamma.
\end{equation}

\begin{prop}\label{PropZeps}
 For all $f\in L^2(\cG)$ and $\zeta\in \Comp\setminus\Real$, the function $z_\eps$ admits the estimate
\begin{equation}\label{ZepsEst}
\|z_\eps\|_{W_2^2(\Gamma)}\leq C\eps^{-1/2}\|f\|
\end{equation}
with $C$ independent of $f$ and $\eps$.
\end{prop}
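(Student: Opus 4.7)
The plan is to interpret \eqref{CPZeps} as a resolvent equation for a fixed self-adjoint operator on the finite graph $\Gamma$, apply standard elliptic regularity on $\Gamma$ to bound $\|z_\eps\|_{W_2^2(\Gamma)}$ by the $L^2(\Gamma)$-norm of the right-hand side, and finally control that norm by $\|f\|_{L^2(\cG)}$ through the scaling $t\mapsto\eps t$.

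First, I would introduce the operator $T$ on $L^2(\Gamma)$ acting as $Tw=-w''+Vw$, with domain consisting of $W_2^2(\Gamma)$-functions that obey the Kirchhoff conditions at the central vertex and the Dirichlet condition on $\partial\Gamma$. Since $V\in L^\infty(\Gamma)$ and $\Gamma$ is compact, $T$ is self-adjoint with compact resolvent. Hence for every $\zeta\in\Comp\setminus\Real$ the resolvent $R(\zeta)=(T-\zeta)^{-1}$ is a bounded operator from $L^2(\Gamma)$ into $\dmn T\subset W_2^2(\Gamma)$, and a standard elliptic estimate on each edge (using that both $V$ and $\zeta$ are now merely bounded coefficients on a compact metric graph) yields
\begin{equation*}
  \|R(\zeta)g\|_{W_2^2(\Gamma)}\le C(\zeta)\,\|g\|_{L^2(\Gamma)}\qquad\text{for all } g\in L^2(\Gamma),
\end{equation*}
with $C(\zeta)$ independent of $g$. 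By construction $z_\eps=R(\zeta)g_\eps$, where $g_\eps(t):=f(\eps t)$, $t\in\Gamma$.

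Second, I would quantify $\|g_\eps\|_{L^2(\Gamma)}$ by the change of variable $s=\eps\tau$ on each edge: using the parametrization \eqref{Parametrization} and the fact that each edge of $\Gamma$ corresponds to $\tau\in[0,1]$ while $\Gamma_\eps$ corresponds to $s\in[0,\eps]$,
\begin{equation*}
  \int_0^1|f_k(\eps\tau)|^2\,d\tau=\eps^{-1}\int_0^\eps|f_k(s)|^2\,ds\le \eps^{-1}\|f_k\|_{L^2(e_k)}^2.
\end{equation*}
Summing over $k=1,\dots,n$ gives $\|g_\eps\|_{L^2(\Gamma)}\le\eps^{-1/2}\|f\|$. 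Combining this with the elliptic estimate from the previous step yields \eqref{ZepsEst} with $C=C(\zeta)$ independent of both $f$ and $\eps$.

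There is no real obstacle here: the argument is the standard scaling/regularity combination. The only point worth stressing is that the $\eps^{-1/2}$ loss is sharp and unavoidable, because squeezing an $L^2(\cG)$-function into a region of measure $\eps$ and then stretching that region back to unit size necessarily inflates the $L^2$-norm by $\eps^{-1/2}$; this is precisely why $z_\eps$ enters \eqref{AsymptoticsYepsC1} with the prefactor $\eps^2$, so that its contribution $\eps^2 z_\eps(x/\eps)$ still tends to zero in the relevant topology despite the blow-up of $\|z_\eps\|_{W_2^2(\Gamma)}$.
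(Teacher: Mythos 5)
Your proof is correct and takes essentially the same route as the paper's: the change of variables giving $\|f(\eps\,\cdot)\|_{L^2(\Gamma)}\le c\,\eps^{-1/2}\|f\|$, combined with the uniform-in-$\eps$ bound $\|z_\eps\|_{W_2^2(\Gamma)}\le c_2\|f(\eps\,\cdot)\|_{L^2(\Gamma)}$ coming from the fixed self-adjoint operator $-d^2/dt^2+V$ on $\Gamma$ (Kirchhoff at the centre, Dirichlet on $\partial\Gamma$) at the non-real point $\zeta$. You merely spell out the operator-theoretic setup that the paper leaves implicit.
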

\begin{proof}
First we see that
\begin{equation*}
\|f(\eps\,\cdot)\|_{L^2(\Gamma)}^2=\int_{\Gamma}|f(\eps t)|^2\,d\Gamma
=\eps^{-1} \int_{\Gamma_\eps}|f(x)|^2\,d\cG\leq c_1\eps^{-1}\|f\|^2
\end{equation*}
for all $f\in L^2(\cG)$.  Here $d\Gamma$ is the Lebesgue induced measure on $\Gamma\subset\Real^2_t$. Of course, $d\Gamma=\eps^{-1} d\cG$, because $\Gamma$ is the dilatation of  $\Gamma_\eps$ by the factor $\eps^{-1}$.
Since $\Im \zeta\neq 0$, there exists a unique solution of \eqref{CPZeps} such that
\begin{equation*}
   \|z_\eps\|_{W_2^2(\Gamma)}\leq c_2\|f(\eps\,\cdot)\|_{L^2(\Gamma)},
 \end{equation*}
and \eqref{ZepsEst} is proved.
\end{proof}

\begin{figure}[h]
\centering
  \includegraphics[scale=1.1]{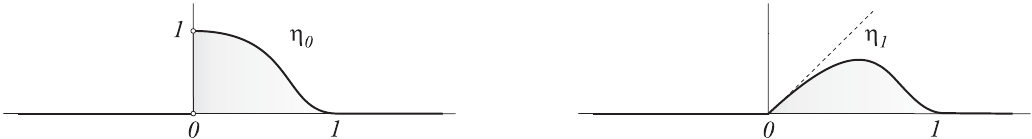}
  \caption{The plots of $\eta_0$ and $\eta_1$}\label{Jumps}
\end{figure}

In general, $\hat{y}_\eps$ has jump discontinuities on $\partial \Gamma_\eps$ and therefore it does not belong to
$\dmn H_\eps$.  But there exists a corrector $\rho^\eps$ with a small $L^2$-norm, as $\eps\to 0$, such that
$Y_\eps:=\hat{y}_\eps+\rho^\eps\in \dmn H_\eps$.
We will define $\rho^\eps$ as follows.
Suppose $\eta_0,\eta_1\colon \Real\to\Real$ are the functions that are smooth outside the origin and  have compact supports contained in $[0,1]$. In addition,
$\eta_0(+0)=1$, $\eta_0'(+0)=0$, $\eta_1(+0)=0$ and $\eta_1'(+0)=1$ (see Fig.~\ref{Jumps}).
Given an edge $e_k$, we set
\begin{equation}\label{CorrectoR}
\rho^\eps_k(\tau)=-[\hat{y}_\eps]_{x=a_k^\eps}\,\eta_0(\tau-\eps)-[\hat{y}_\eps']_{x=a_k^\eps}\,\eta_1(\tau-\eps)
\end{equation}
for $\tau\geq 0$, where $[h]_{x=b}$ is a jump of a function $h$ at the point $b$.
It is easy to check that $[\rho^\eps_k]_{\tau=\eps}=-[\hat{y}_\eps]_{x=a_k^\eps}$ and
$\big[\frac{d\rho^\eps_k}{d\tau }\big]_{\tau=\eps}=-[\hat{y}_\eps']_{x=a_k^\eps}$. From this we conclude that  $Y_\eps\in W_2^{2,loc}(\cG\setminus\{0\})$. Also, we see that $\rho^\eps=0$ on $\Gamma_\eps$, so the function $Y_\eps$ satisfies Kirchhoff's condition at the origin. Hence, $Y_\eps\in\dmn H_\eps$.

\begin{prop}\label{LemmaRho}
 Assume that  $\rho^\eps$ is  given by \eqref{CorrectoR}. There exist constants $C_1$ and $C_2$ independent of $f$ such that
  \begin{gather}\label{supRho}
    \|\rho^\eps\|_{C^2(\cG_\eps)}\leq C_1\eps^{1/2}\|f\|,\\\nonumber
   \|Q\rho^\eps\|\leq C_2\eps^{1/4}\|f\|.
  \end{gather}
\end{prop}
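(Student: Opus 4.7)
The plan is to first derive explicit estimates for the two jumps $[\hat{y}_\eps]_{x=a_k^\eps}$ and $[\hat{y}_\eps']_{x=a_k^\eps}$ that serve as the coefficients in \eqref{CorrectoR}, and then feed them into the two claimed bounds.

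At the boundary vertex $a_k^\eps$, using the definition of $\hat{y}_\eps$ together with the relations $u|_{\partial\Gamma}=y(0)$, $u'|_{\partial\Gamma}=0$, $v'|_{\partial\Gamma}=Ky(0)$, $w'|_{\partial\Gamma}=y^{[1]}(0)$ from \eqref{CConds} and the Dirichlet condition $z_\eps|_{\partial\Gamma}=0$ from \eqref{CPZeps}, a direct computation gives
\[
[\hat{y}_\eps]_{x=a_k^\eps}=y_k(a_k^\eps)-y_k(0)-\eps\ln\eps\,v(a_k)-\eps w(a_k),
\]
\[
[\hat{y}_\eps']_{x=a_k^\eps}=y_k'(a_k^\eps)-\ln\eps\,q_k y_k(0)-y_k^{[1]}(0)-\eps z_\eps'(a_k).
\]
The estimates collected in \eqref{ULnEst}--\eqref{UBEsts} control the first three terms on each right-hand side by $C\eps|\ln\eps|\,\|f\|$ and $C\eps^{1/2}\|f\|$, respectively. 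Proposition~\ref{PropUVW} combined with the Sobolev embedding $W_2^2(\Gamma)\hookrightarrow C(\Gamma)$ gives $|v(a_k)|+|w(a_k)|\leq C\|f\|$. Finally, by a standard trace inequality for a function in $W_2^2$ on an edge of $\Gamma$ and Proposition~\ref{PropZeps}, $|z_\eps'(a_k)|\leq C\|z_\eps\|_{W_2^2(\Gamma)}\leq C\eps^{-1/2}\|f\|$, whence $\eps|z_\eps'(a_k)|\leq C\eps^{1/2}\|f\|$. Putting everything together yields
\[
|[\hat{y}_\eps]_{x=a_k^\eps}|\leq C\eps|\ln\eps|\,\|f\|,\qquad |[\hat{y}_\eps']_{x=a_k^\eps}|\leq C\eps^{1/2}\|f\|.
\]

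Since $\rho^\eps_k$ is a fixed linear combination of the $C^2$ profiles $\eta_0(\cdot-\eps)$ and $\eta_1(\cdot-\eps)$ with precisely these two jumps as coefficients, the first inequality \eqref{supRho} is immediate: differentiating up to order two and taking suprema gives $\|\rho^\eps\|_{C^2(\cG_\eps)}\leq C(\eps|\ln\eps|+\eps^{1/2})\|f\|\leq C_1\eps^{1/2}\|f\|$ for $\eps$ sufficiently small.

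The main technical point is the $L^2$-bound on $Q\rho^\eps$: the support of $\rho^\eps_k$ is contained in $[\eps,1+\eps]$, where the Coulomb factor $Q_k(\tau)=q_k/\tau$ is as large as $1/\eps$, so a naive pointwise estimate would only give $\|Q\rho^\eps\|=O(1)\|f\|$. The idea is to change variables $s=\tau-\eps$ and split:
\[
\|Q\rho^\eps\|^2\leq 2\sum_{k=1}^n q_k^2\left(|[\hat{y}_\eps]_{x=a_k^\eps}|^2\!\int_0^1\!\frac{\eta_0(s)^2}{(s+\eps)^2}\,ds+|[\hat{y}_\eps']_{x=a_k^\eps}|^2\!\int_0^1\!\frac{\eta_1(s)^2}{(s+\eps)^2}\,ds\right).
\]
The first integral is $O(1/\eps)$, but this is absorbed by the tiny jump $|[\hat{y}_\eps]|^2=O(\eps^2(\ln\eps)^2)$, producing a contribution of size $O(\eps(\ln\eps)^2)$. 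The decisive observation concerns the second integral: because $\eta_1(0)=0$ and $\eta_1'(0)=1$, the ratio $\eta_1(s)/s$ is bounded on $(0,1]$, and therefore $\eta_1(s)^2/(s+\eps)^2\leq \eta_1(s)^2/s^2$ is uniformly integrable and the second integral remains bounded as $\eps\to 0$; together with $|[\hat{y}_\eps']|^2=O(\eps)\|f\|^2$ this second summand is $O(\eps)\|f\|^2$. Altogether $\|Q\rho^\eps\|^2\leq C\eps(\ln\eps)^2\|f\|^2$, and since $\eps^{1/2}|\ln\eps|=o(\eps^{1/4})$ as $\eps\to 0$, the second inequality of the proposition follows. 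The delicate cancellation—$\eta_1$ vanishes linearly precisely where $Q$ blows up—is the crux of the argument; without it the estimate would fail.
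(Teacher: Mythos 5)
Your proof is correct. The first half — the estimates $\bigl|[\hat{y}_\eps]_{x=a_k^\eps}\bigr|\le C\eps|\ln\eps|\,\|f\|$ and $\bigl|[\hat{y}_\eps']_{x=a_k^\eps}\bigr|\le C\eps^{1/2}\|f\|$, obtained from \eqref{ULnEst}--\eqref{UBEsts}, Propositions~\ref{PropUVW} and \ref{PropZeps} and the embedding $W_2^2(\Gamma)\subset C^1(\Gamma)$, and the resulting bound \eqref{supRho} — coincides with the paper's argument. For the bound on $\|Q\rho^\eps\|$ you take a genuinely different route: you estimate the weighted integral directly, separating the $\eta_0$- and $\eta_1$-contributions and exploiting that $\int_0^1\eta_0(s)^2(s+\eps)^{-2}\,ds=O(\eps^{-1})$ is compensated by the very small value jump, while $\eta_1(s)/s$ bounded makes the second integral $O(1)$. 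The paper instead splits the region of integration at $|x|=\eps^{1/4}$, uses the pointwise bound $\sup_{\cG(\eps,\eps^{1/4})}|\rho^\eps|\le c\eps^{3/4}\|f\|$ (which relies on the same linear vanishing $|\eta_1(\tau)|\le c\tau$) on the inner piece, and the crude bound $|x|^{-2}\le\eps^{-1/2}$ on the outer piece. Both arguments hinge on the same cancellation — $\eta_1$ vanishes linearly exactly where $Q$ blows up — but your direct computation yields the sharper bound $\|Q\rho^\eps\|\le C\eps^{1/2}|\ln\eps|\,\|f\|$, which of course implies the stated $O(\eps^{1/4})$ estimate, whereas the paper's splitting only delivers $O(\eps^{1/4})$. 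Since the overall rate in Theorem~\ref{TheoremEst} is anyway limited to $\eps^{1/4}$ by the remainder estimate for $g_\eps$, the improvement does not propagate, but the argument is cleaner and slightly stronger at this step.
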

\begin{proof}
Let $[\hat{y}_\eps]_{\partial \Gamma_\eps}$ and $[\hat{y}_\eps']_{\partial \Gamma_\eps}$ be the vectors in $\Comp^n$ with  entries $[\hat{y}_\eps]_{x=a_k^\eps}$ and $[\hat{y}_\eps']_{x=a_k^\eps}$, $k=1,\dots,n$, respectively.
To prove \eqref{supRho} it suffices to show
\begin{equation*}
  \big\|[\hat{y}_\eps]_{\partial \Gamma_\eps}\big\|_{\Comp^n}+ \big\|[\hat{y}_\eps']_{\partial \Gamma_\eps}\big\|_{\Comp^n}\leq c\eps^{1/2}\|f\|,
\end{equation*}
since the functions $\eta_j$ in \eqref{CorrectoR} are smooth and bounded on $\cG_\eps$ together with all their derivatives.
Combining \eqref{ULnEst}-\eqref{UBEsts}, Propositions~\ref{PropUVW} and \ref{PropZeps}, and the continuity  of embedding
$W_2^2(\Gamma)\subset C^1(\Gamma)$, we conclude that
\begin{align*}
  &\begin{aligned}
    \big\|[\hat{y}_\eps]_{\partial \Gamma_\eps}\big\|_{\Comp^n}&=\big\|y|_{\partial \Gamma_\eps}-u|_{\partial \Gamma}-\eps\ln\eps \:v|_{\partial \Gamma}-\eps w|_{\partial \Gamma}\big\|_{\Comp^n}\\
    &\leq \big\|y|_{\partial \Gamma_\eps}-y(0)\big\|_{\Comp^n}+\eps|\ln\eps| \big\|v|_{\partial \Gamma}\big\|_{\Comp^n}+\eps \big\|w|_{\partial \Gamma}\big\|_{\Comp^n}\leq  c_1 \eps|\ln\eps| \,\|f\|,
  \end{aligned}
  \\
  &\begin{aligned}
    \big\|[\hat{y}_\eps']_{\partial \Gamma_\eps}\big\|_{\Comp^n}&=\big\|y'|_{\partial \Gamma_\eps}-\eps^{-1}u'|_{\partial \Gamma}-\ln\eps \:v'|_{\partial \Gamma}- w'|_{\partial \Gamma}-\eps z_\eps'|_{\partial \Gamma}\big\|_{\Comp^n}\\
    &\leq \big\|y'|_{\partial \Gamma_\eps}-\ln\eps \:Ky(0)-\qdrz y\big\|_{\Comp^n}+\eps \big\|z_\eps'|_{\partial \Gamma}\big\|_{\Comp^n} \leq  c_2 \eps^{1/2} \,\|f\|.
  \end{aligned}
\end{align*}
Set $\cG(\eps_1, \eps_2)=\cG\cap\{x\colon \eps_1\leq|x|\leq\eps_2\}$.
Since $|\eta_1(\tau)|\leq c\tau$ as $\tau\to +0$, we have
\begin{multline}\label{EstRhoNear0}
  \sup_{\cG(\eps, \eps^{1/4})}|\rho^\eps|\leq c_3
  \big(\big\|[\hat{y}_\eps]_{\partial \Gamma_\eps}\big\|_{\Comp^n}+ \big\|[\hat{y}_\eps']_{\partial \Gamma_\eps}\big\|_{\Comp^n}\eps^{1/4}\big)
  \\
\leq c_4(\eps|\ln\eps| + \eps^{3/4})\|f\|\leq c_5\eps^{3/4}\|f\|.
\end{multline}
Recall that $\rho^\eps=0$ outside of $\cG(\eps, 2)$, provided  $\eps$ is small. Then utilizing estimates \eqref{supRho} and \eqref{EstRhoNear0},  we obtain the bound
\begin{align*}\allowdisplaybreaks
  \|Q\rho^\eps\|^2&= \int\limits_{\cG(\eps, 2)}\kern-2pt Q^2|\rho^\eps|^2\,d\cG
  \leq
  \max\limits_{1\leq k\leq n} |q_k|^2\kern-2pt\int\limits_{\cG(\eps, 2)}\kern-2pt |x|^{-2}|\rho^\eps|^2\,d\cG
  \\\allowdisplaybreaks
  &\leq c_6\left(\;\int\limits_{\cG(\eps, \eps^{1/4})}\kern-2pt |x|^{-2}|\rho^\eps|^2\,d\cG
  +\kern-2pt\int\limits_{\cG(\eps^{1/4}, 2)}\kern-2pt |x|^{-2}|\rho^\eps|^2\,d\cG\right)
  \\\allowdisplaybreaks
   &\leq c_6\sup_{\cG(\eps, \eps^{1/4})}|\rho^\eps|^2
   \kern-2pt\int\limits_{\cG(\eps, \eps^{1/4})}\kern-2pt |x|^{-2}\,d\cG
  +c_6\sup_{\cG(\eps^{1/4}, 2)} |x|^{-2}|\rho^\eps|^2
 \kern-2pt \int\limits_{\cG(\eps^{1/4}, 2)}\kern-4pt d\cG
  \\
  &\leq c_7 \left(\eps^{3/2} \kern-2pt \int\limits_{\cG(\eps, \eps^{1/4})}\kern-2pt |x|^{-2}\,d\cG+\eps^{1/2}\right)\|f\|^2
  \leq c_8\eps^{1/2}\|f\|^2,
\end{align*}
which completes the proof.
\end{proof}

\subsection{End of the proof}
We will now prove that the remainder term
\begin{equation}\label{ResEqnWithRemnd}
  g_\eps=(H_\eps-\zeta)Y_\eps-f
\end{equation}
is small in the $L_2$-norm uniformly with respect to $f$. For $x\in\cG_\eps$,  we have
\begin{multline*}
  g_\eps(x)=-\big(y(x)+\rho^\eps(x)\big)''+\big(Q(x)-\zeta\big)\big(y(x)
  +\rho^\eps(x)\big)-f(x)\\=-(\rho^\eps)''(x) +(Q(x)-\zeta)\rho_\eps(x),
\end{multline*}
by \eqref{AsymptoticsEq}. If $x\in\Gamma_\eps$, then
\begin{align*}
   g_\eps(x)  &= \eps^{-2} \Big(-u''\xe+V\xe u\xe\Big)
    \\
       &+\eps^{-1}\ln\eps \Big(-v''\xe+V\xe v\xe+\kappa\xe u\xe\Big)
    \\
     &  +\eps^{-1}\Big(-w''\xe+ V\xe w+U\xe u\xe\Big)
    \\
     &+\Big(-z''_\eps\xe+(V\xe-\zeta) z_\eps\xe-f(x)\Big)
     \\
     &+\ln\eps \:\kappa\xe\big(\ln\eps \,v\xe+ w\xe+\eps z_\eps\xe\big)\\
     &+U\xe\big(\ln\eps \,v\xe+w\xe+\eps z_\eps\xe\big)
     - \zeta\big(u\xe+\ln\eps \,v\xe+\eps w\xe\big)
     \\
    &=\big( \ln\eps\,\kappa\xe+U\xe\big)\big(\ln\eps \,v\xe+w\xe+ \eps z_\eps\xe\big)\\
    &\hskip120pt-\zeta\big(u\xe+\ln\eps \,v\xe+\eps w\xe\big)
\end{align*}
by \eqref{ProblemU}--\eqref{ProblemW} and \eqref{CPZeps}. Hence we have
\begin{multline*}
 \|g_\eps\|\leq \|(\rho^\eps)''+\zeta\rho^\eps\|+\|Q\rho^\eps\|\\
 +\sup_{t\in \cG}\big(|\ln\eps||\kappa(t)|+|U(t)|\big)
 \|\ln\eps \,v\xep+w\xep+\eps z_\eps\xep\|_{L^2(\Gamma_\eps)}\\
 +|\zeta|\,\|u\xep+\ln\eps \,v\xep+w\xep\|_{L^2(\Gamma_\eps)}
 \leq c_1(\eps^{1/2}+\eps^{1/4})\|f\|\\
 +c_2 \eps^{1/2} |\ln\eps|\, \|\ln\eps\, v+w+\eps z_\eps\|_{L^2(\Gamma)}
 +|\zeta|\eps^{1/2}\,\|u+v\ln\eps+w\|_{L^2(\Gamma)}
 \leq c\eps^{1/4}\|f\|
\end{multline*}
in view of Propositions~\ref{PropUVW}-\ref{LemmaRho}.  Here we have also used the equality
\begin{equation*}
  \|h\xep\|_{L^2(\Gamma_\eps)}=\eps^{1/2}\|h\|_{L^2(\Gamma)}
\end{equation*}
for each $h\in L^2(\Gamma)$. We conclude from \eqref{ResEqnWithRemnd} that
\begin{equation*}
  Y_\eps=(H_\eps-\zeta)^{-1}(f+ g_\eps)=y_\eps+(H_\eps-\zeta)^{-1}g_\eps,
\end{equation*}
hence that
\begin{equation}\label{estYeps-Ueps}
   \|y_\eps-Y_\eps\|=\|(H_\eps-\zeta)^{-1}g_\eps\|\leq |\Im\zeta|^{-1}\|g_\eps\|\leq c|\Im\zeta|^{-1}\eps^{1/4}\|f\|.
\end{equation}
Now let us consider the difference
\begin{equation*}
 Y_\eps(x)-y(x)=
  \begin{cases}
      \rho^\eps(x), & \text{if } x\in \cG_\eps,\\
      u\xe+v\xe\eps\ln\eps+ w\xe\eps+z_\eps\xe \eps^2-y(x), & \text{if }  x\in \Gamma_\eps,
 \end{cases}
\end{equation*}
We can as before invoke Propositions~\ref{PropUVW}-\ref{LemmaRho} and bound \eqref{EstU} to derive
\begin{multline}\label{estUeps-U}
   \|Y_\eps-y\|\leq \|\rho^\eps\|+\eps^{1/2}\| u+\eps\ln\eps \,v+ \eps w+\eps^2 z_\eps\|_{L^2(\Gamma)}
   \\+\|y\|_{L_2(\Gamma_\eps)}
   \leq c_1\eps^{1/2}(\|f\|+\max_{\Gamma_\eps}|y|)\leq c_2\eps^{1/2}\|f\|.
\end{multline}
Recalling the definitions of $y_\eps$ and $y$, we estimate
\begin{multline*}
    \|(H_\eps-\zeta)^{-1}f-(\mathcal{H}-\zeta)^{-1}f\|=\|y_\eps-y\|\\
    \leq\|y_\eps-Y_\eps\|+ \|Y_\eps-y\|
      \leq C (1+|\Im \zeta|^{-1})\eps^{1/4}\|f\|,
\end{multline*}
by \eqref{estYeps-Ueps} and \eqref{estUeps-U}. The last bound establishes
the norm resolvent convergence of  $H_\eps$ to the operator $\mathcal{H}$ and the estimate in Theorem~\ref{TheoremEst}.

\section{Proof of Theorem~\ref{OprQConvergenceToDirectSum}}\label{Sect4}
We now suppose that $W_\eps(x)=Q_\eps(x)+\eps^{-1}U(\eps^{-1}x)$ and the regularization $Q_\eps$ of a Coulomb-type potential $Q$ is given by \eqref{Qeps}. Assume $Q_\eps$ does not satisfy condition \eqref{QPlusDeltaConvCond}, but $H_\eps$ converge to an operator $H_0$ in the strong resolvent topology.

Given $\zeta\in\Comp\setminus\Real$ and $f\in L^2(\cG)$ , we set $y_\eps=(H_\eps-\zeta)^{-1}f$ and $y=(H_0-\zeta)^{-1}f$. The function $y_\eps$ is a solution of the equation $-y_\eps''+Qy_\eps=\zeta y_\eps+f$ on $\cG\setminus\Gamma_\eps$. Then the strong resolvent convergence implies that $y_\eps\to y$ in $L^2(\cG)$, hence that $y$ solves the equation
$-y''+Q y=\zeta y+f$ on $\cG$.

\begin{prop}\label{PropYepsY0}
As $\eps\to 0$, we have
\begin{itemize}
    \item[\textit{(i)}] $y_\eps|_{\partial \Gamma_\eps}\to y(0)$;
    \item[\textit{(ii)}] $\max\limits_{x\in\Gamma_\eps}|y_\eps(x)|\leq c$ with constant being independent of $\eps$.
\end{itemize}
\end{prop}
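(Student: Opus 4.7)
My plan is to prove (i) first by exploiting that $r_\eps := y_\eps - y$ solves a \emph{homogeneous} Coulomb Schr\"odinger equation on $\cG_\eps$, forcing it into a one-dimensional subspace of $L^2$-at-infinity solutions on each edge. For (ii), I will pass to stretched variables on $\Gamma_\eps$ and treat the resulting problem as a small perturbation of an invertible BVP on the fixed compact graph $\Gamma$, using the boundary data supplied by (i).

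For (i), note first that $y_\eps \to y$ in $L^2(\cG)$ and $W_\eps = Q$ on $\cG_\eps$ for small $\eps$, so passing to the distributional limit in $\cG \setminus \{0\}$ shows that $-y'' + Qy - \zeta y = f$ classically on each edge away from $0$. By elliptic regularity on the edges, $y \in \cW_Q$, and Proposition~\ref{PropYasymp} gives existence of the limits $y_k(0)$ and $y(a_k^\eps) \to y_k(0)$. It therefore suffices to prove $r_\eps(a_k^\eps) \to 0$. On $e_k \cap \cG_\eps = [\eps, \infty)$, the difference $r_\eps$ satisfies $-r_\eps'' + (q_k/|x| - \zeta) r_\eps = 0$. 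Since $\Im \zeta \neq 0$, the asymptotic behavior $z'' + \zeta z \approx 0$ at infinity singles out a unique (up to scalar) decaying solution $\psi_k^+(\,\cdot\,;\zeta) \in L^2(e_k)$, and by Proposition~\ref{PropYasymp} applied edgewise, $\psi_k^+$ is bounded as $x \to 0$. Because $r_\eps \in L^2([\eps, \infty))$, we must have $r_\eps = c_\eps^{(k)} \psi_k^+$ on $[\eps, \infty)$, and the lower bound
\[
|c_\eps^{(k)}|^2 \,\|\psi_k^+\|_{L^2([1, \infty))}^2 \leq |c_\eps^{(k)}|^2 \,\|\psi_k^+\|_{L^2([\eps, \infty))}^2 = \|r_\eps\|_{L^2([\eps, \infty))}^2 \leq \|y_\eps - y\|_{L^2(\cG)}^2 \to 0
\]
forces $c_\eps^{(k)} \to 0$. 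Hence $r_\eps(a_k^\eps) = c_\eps^{(k)} \psi_k^+(\eps) \to 0$, completing the proof of (i).

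For (ii), set $u_\eps(t) = y_\eps(\eps t)$ on $\Gamma$, so that
\[
-u_\eps'' + \bigl(\eps \ln\eps \cdot \kappa(t) + \eps U(t) - \eps^2 \zeta\bigr) u_\eps = \eps^2 f(\eps t) \quad \text{on } \Gamma,
\]
with Kirchhoff at the central vertex and $u_\eps(a_k) = y_\eps(a_k^\eps)$ on $\partial\Gamma$; these boundary data are uniformly bounded by (i). Decompose $u_\eps = v_\eps + w_\eps$, where $v_\eps$ is the unique piecewise-linear function on $\Gamma$ matching the boundary values and obeying Kirchhoff at the center (explicitly, a simple average-and-difference formula in terms of the bounded $y_\eps(a_k^\eps)$), so $\|v_\eps\|_{L^\infty(\Gamma)}$ is uniformly bounded. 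The remainder $w_\eps$ solves
\[
-w_\eps'' + P_\eps w_\eps = h_\eps \quad \text{on } \Gamma, \qquad w_\eps \in \cK(\Gamma), \qquad w_\eps|_{\partial \Gamma} = 0,
\]
with $P_\eps = O(\eps |\ln \eps|)$ and $h_\eps = \eps^2 f(\eps t) - P_\eps v_\eps = O(\eps |\ln \eps|)$ in $L^\infty(\Gamma)$. The unperturbed operator $-d^2/dt^2$ on $\Gamma$ with Dirichlet on $\partial\Gamma$ and Kirchhoff at the center is strictly positive (its kernel consists of piecewise-linear Kirchhoff functions vanishing on $\partial\Gamma$, which must be identically zero), so for small $\eps$ the perturbed operator remains invertible with uniformly bounded inverse; standard elliptic estimates then give $\|w_\eps\|_{W_2^2(\Gamma)} \to 0$, and Sobolev embedding yields $\|w_\eps\|_{L^\infty(\Gamma)} \to 0$. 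Therefore $\|u_\eps\|_{L^\infty(\Gamma)} \leq \|v_\eps\|_{L^\infty(\Gamma)} + \|w_\eps\|_{L^\infty(\Gamma)}$ is uniformly bounded, which is (ii).

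The main obstacle is the structural argument in (i): although $r_\eps$ is only defined on the retracting domain $\cG_\eps$, one must force it into a fixed one-dimensional subspace whose generator $\psi_k^+$ is $\eps$-independent and remains bounded at the excised vertex. The $\eps$-independence of $\psi_k^+$ and its boundedness at $0$ (which is a consequence of Proposition~\ref{PropYasymp}) are what convert the smallness of $\|r_\eps\|_{L^2(\cG)}$ into pointwise smallness at $\partial\Gamma_\eps$; without these, the vanishing of an $L^2$-norm would not translate into vanishing boundary values.
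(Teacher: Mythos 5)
Your proof is correct and follows essentially the same route as the paper: part \textit{(i)} rests on the one-dimensionality of the space of $L^2$-at-infinity solutions of the homogeneous Coulomb equation on each edge together with their boundedness at the origin (the paper writes $y_{\eps,k}=b_\eps^k Y_{\zeta,0}+Y_{\zeta,f}$ with $b_\eps^k\to b_0^k$, which is your $c_\eps^{(k)}=b_\eps^k-b_0^k\to 0$ in disguise), and part \textit{(ii)} is the same rescaling to $\Gamma$ followed by the uniform elliptic estimate for the Kirchhoff--Dirichlet boundary value problem, which you merely unpack via the piecewise-linear lift of the boundary data. One small correction: since $f$ is only assumed to lie in $L^2(\cG)$, your claim that $h_\eps=O(\eps|\ln\eps|)$ in $L^\infty(\Gamma)$ is unjustified --- measure $h_\eps$ in $L^2(\Gamma)$ instead, where $\|\eps^2 f(\eps\,\cdot)\|_{L^2(\Gamma)}\le c\,\eps^{3/2}\|f\|_{L^2(\cG)}$, which is all the elliptic estimate requires.
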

\begin{proof}
\textit{(i)} Fix an edge $e_k$. Then $y_{\eps,k}$ is a family of solutions of the equation
\begin{equation}\label{EqnForYeps}
-\frac{d^2Y}{d\tau^2}+\frac{q_k}{\tau}\,Y=\zeta Y+f(\tau)
\end{equation}
for $\tau\in [\eps,+\infty)$. This family admits the representation
\begin{equation}\label{SimpleRepes}
    y_{\eps,k}(\tau)=b_\eps^k Y_{\zeta,0}(\tau)+Y_{\zeta,f}(\tau),\quad \tau\geq \eps,
\end{equation}
where $b_\eps^k$ is a constant,  $Y_{\zeta,f}$ is a $L^2(\Real_+)$-solution of \eqref{EqnForYeps} and $Y_{\zeta,0}$ is a $L^2(\Real_+)$-solution of the corresponding homogeneous equation. The solution $Y_{\zeta,0}$ can be expressed in terms of the Whittaker functions (see \cite{Moshinsky1993} for details). In view of Proposition~\ref{PropYasymp}, both $Y_{\zeta,0}$ and $Y_{\zeta,f}$ are bounded in a neighbourhood of the origin and have finite values $Y_{\zeta,0}(+0)$ and $Y_{\zeta,f}(+0)$. Moreover $b_\eps^k\to b_0^k$, as $\eps\to 0$, because of $y_{\eps,k}\to y_k$ in $L^2(e_k)$. Hence,
\begin{equation*}
  y_{\eps,k}(a^\eps_k)\to b_0^k Y_{\zeta,0}(+0)+Y_{\zeta,f}(+0),\quad \text{as } \eps\to 0,
\end{equation*}
for each $k=1,\dots,n$. It remains to notice that $y_k=b_0^k Y_{\zeta,0}+Y_{\zeta,f}$ on $e_k$.

\textit{(ii)} The function $u_\eps(t)=y_\eps(\eps t)$ is a solution of the boundary value problem
\begin{gather*}
  u_\eps''=\eps\ln\eps\; \kappa(t) u_\eps+\eps U(t) u_\eps-\eps^2\zeta u_\eps-\eps^2f(\eps t),\quad t\in\Gamma,\\
  u_\eps\in\cK(\Gamma), \quad   u_\eps|_{\partial \Gamma}=y_\eps|_{\partial \Gamma_\eps}.
\end{gather*}
Then the standard elliptic estimate gives us
\begin{equation*}
\|u_\eps\|_{W_2^2(\Gamma)}\leq c_1\big(\|y_\eps|_{\partial \Gamma_\eps}\|_{\Comp^n}+\eps|\ln\eps|\;\|u_\eps\|_{L^2(\Gamma)}
+\eps^2\|f(\eps\,\cdot)\|_{L^2(\Gamma)}\big)\leq c_2.
\end{equation*}
Indeed, the norm $\|y_\eps|_{\partial \Gamma_\eps}\|_{\Comp^n}$ is bounded as $\eps\to 0$, by part \textit{(i)}. Moreover
\begin{equation*}
 \|u_\eps\|_{L^2(\Gamma)}\leq c_3\eps^{-1/2}\|y_\eps\|, \qquad \|f(\eps\,\cdot)\|_{L^2(\Gamma)}\leq c_4\eps^{-1/2}\|f\|.
\end{equation*}
By the Sobolev imbedding theorems, we have $\max\limits_{t\in\Gamma}|y_\eps(\eps t)|\leq c$, which is the desired conclusion.
\end{proof}

\begin{prop}\label{PropYisCont}
 The function $y$ is continuous at the vertex $a=0$, i.e.,
 \begin{equation*}
   y_1(0)=y_2(0)=\cdots=y_n(0).
 \end{equation*}
\end{prop}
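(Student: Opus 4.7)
The strategy is to study the rescaled function $u_\eps(t)=y_\eps(\eps t)$ on the dilated star graph $\Gamma$ and to identify its limit as $\eps\to 0$. The proof of Proposition~\ref{PropYepsY0}\textit{(ii)} already yields the uniform bound $\|u_\eps\|_{W_2^2(\Gamma)}\leq c$; by the compact embedding $W_2^2(\Gamma)\hookrightarrow C^1(\Gamma)$, every subsequence of $\{u_\eps\}$ contains a further subsequence converging in $C^1(\Gamma)$ to some $u_0\in\cK(\Gamma)$. The right-hand side of the equation for $u_\eps$ written in the proof of Proposition~\ref{PropYepsY0}\textit{(ii)} is bounded in $L^2(\Gamma)$ by $O(\eps|\ln\eps|)$, so passing to the limit gives $u_0''=0$ on $\Gamma$. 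Hence $u_0$ is affine on each edge of $\Gamma$, and by Proposition~\ref{PropYepsY0}\textit{(i)} we have $u_0(a_k)=y_k(0)$ at every boundary vertex~$a_k$.

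Writing $u_{0,k}(t)=c+t\alpha_k$ on the $k$-th edge with $c=u_0(0)$, the endpoint identity gives $\alpha_k=y_k(0)-c$, so the proposition reduces to showing that the outward derivative $\alpha_k=u_0'(a_k)$ vanishes for every $k$. Starting from the representation \eqref{SimpleRepes} and differentiating, Proposition~\ref{PropYasymp} applied to the $L^2(\Real_+)$-solutions $Y_{\zeta,0}$ and $Y_{\zeta,f}$ yields
\begin{equation*}
   y'_{\eps,k}(\eps)=q_k\bigl(b_\eps^k\, Y_{\zeta,0}(+0)+Y_{\zeta,f}(+0)\bigr)\ln\eps+O(1).
\end{equation*}
Since $b_\eps^k\to b_0^k$ and $b_0^k\, Y_{\zeta,0}(+0)+Y_{\zeta,f}(+0)=y_k(0)$, this reduces to $y'_{\eps,k}(\eps)=q_k y_k(0)\ln\eps+O(1)$, and therefore
\begin{equation*}
  u_\eps'(a_k)=\eps\, y'_{\eps,k}(\eps)=q_k y_k(0)\,\eps\ln\eps+O(\eps)\to 0,
\end{equation*}
which gives $\alpha_k=0$.

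With $\alpha_k=0$ for every $k$, the function $u_0$ is constant on each edge of $\Gamma$, equal to $y_k(0)$ on the $k$-th edge. The continuity at the central vertex built into $\cK(\Gamma)$ then forces $y_1(0)=\cdots=y_n(0)$. Since every convergent subsequence leads to the same conclusion, the argument is complete. The main obstacle is the extraction of the precise logarithmic asymptotics of $y'_{\eps,k}$ at $\tau=\eps$, where the interaction between the Coulomb-type singularity and the shrinking inner region has to be tracked carefully; the remainder of the proof is just compactness on $\Gamma$ combined with the Kirchhoff structure at its central vertex.
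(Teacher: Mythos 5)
Your argument is correct, but it follows a genuinely different route from the paper. The paper proves the proposition directly on $\Gamma_\eps$: it multiplies equation \eqref{EqnOnGammaEps} by the piecewise-linear test function $h_{ij}$ (equal to $|x|$ on $e_i$, $-|x|$ on $e_j$, zero elsewhere), integrates by parts, and shows that every term on the right of the resulting identity for $y_\eps(a^\eps_i)-y_\eps(a^\eps_j)$ is $o(1)$ — the boundary term because $\eps|y_\eps'(a^\eps_k)|=O(\eps|\ln\eps|)$, the integrals because $\int_{\Gamma_\eps}|h_{ij}||w_\eps|\,d\cG=O(\eps^{3/2})$ for $L^2$-bounded $w_\eps$. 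You instead blow up to the fixed graph $\Gamma$, use the uniform $W_2^2(\Gamma)$ bound on $u_\eps$ from the proof of Proposition~\ref{PropYepsY0}\textit{(ii)} together with compactness of $W_2^2\hookrightarrow C^1$ edgewise, and identify the subsequential limit $u_0$ as an edgewise affine function with prescribed boundary values $y_k(0)$ and vanishing outward derivatives, hence a global constant by the continuity built into $\cK(\Gamma)$. Both proofs ultimately rest on the same three ingredients — Proposition~\ref{PropYepsY0}\textit{(i)}, the $L^2(\Gamma)$ boundedness of $u_\eps$, and the bound $|y_{\eps,k}'(\eps)|=O(|\ln\eps|)$ coming from \eqref{SimpleRepes} and Proposition~\ref{PropYasymp} — so neither requires more than the other. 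The paper's test-function computation is more elementary and fully quantitative (it gives an explicit rate for $y_\eps(a^\eps_i)-y_\eps(a^\eps_j)$), whereas your compactness argument is less explicit but yields a sharper structural statement: the whole rescaled profile $u_\eps$ converges in $C^1(\Gamma)$ to the constant $y_1(0)$, not merely its boundary differences to zero; and since that limit is independent of the subsequence, the extraction is harmless. One small presentational point: the Kirchhoff sum condition $\sum_k\alpha_k=0$ inherited by $u_0$ is not needed once you know every $\alpha_k=0$, so you may omit mentioning $\cK(\Gamma)$ beyond the continuity of $u_0$ at the central vertex.
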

\begin{proof}
Given $i$ and $j$, we introduce the function
\begin{equation*}
  h_{ij}(x)=
  \begin{cases}
    \phantom{-}|x|& \text{for } x\in e_i,\\
    -|x|& \text{for } x\in e_j,\\
    \phantom{-1}0& \text{on other edges}
  \end{cases}
\end{equation*}
that satisfies the Kirchhoff conditions at the origin.
Multiplying the equation
\begin{equation}\label{EqnOnGammaEps}
  -y_\eps''+\eps^{-1}\ln\eps\; \kappa\xe y_\eps+\eps^{-1}U\xe y_\eps=\zeta y_\eps+f(x), \quad x\in \Gamma_\eps
\end{equation}
by  $h_{ij}$ and integrating by parts yield
 \begin{multline}\label{Ch6DifferenceY}
  y_\eps(a^\eps_i)-y_\eps(a^\eps_j)=\eps \big(y_\eps'(a^\eps_i)-y_\eps'(a^\eps_j)\big)\\-
  \int_{\Gamma_\eps}\left(\eps^{-1}\ln\eps\,\kappa\xe+\eps^{-1}U\xe-\zeta- f(x)\right)h_{ij}(x)y_\eps(x)\,d\cG.
\end{multline}
By \eqref{LogAsympOfDerv} and \eqref{SimpleRepes}, we have $|y_\eps'(a^\eps_k)|\leq c_1|\ln \eps|$, as $\eps\to 0$, for all $k=1,\dots,n$.
Set $e_k^\eps=e_k\cap\{x\colon |x|\leq \eps\}$. Then the estimate
\begin{equation*}
  \left|\int_{\Gamma_\eps}h_{ij}(x)w_\eps(x)\,d\cG\right|\leq\left|\int_{e_i^\eps\cup e_j^\eps}|x||w_\eps(x)|\,d\cG\right|\leq c\eps^{3/2}
\end{equation*}
holds, provided the sequence $\|w_\eps\|_{L^2(\Gamma_\eps)}$ is bounded as $\eps\to 0$. In view of Proposition~\ref{PropYepsY0}\textit{(ii)}, the right hand side of \eqref{Ch6DifferenceY} tends to zero as $\eps\to 0$.
Hence
\begin{equation*}
 y_\eps(a^\eps_i)-y_\eps(a^\eps_j)\to 0,
\end{equation*}
 so $y_i(0)=y_j(0)$, by part \textit{(i)} of Proposition~\ref{PropYepsY0}.
\end{proof}

Integrating equation \eqref{EqnOnGammaEps} over $\Gamma_\eps$, we obtain
\begin{multline*}
  \sum_{k=1}^n y_\eps'(a^\eps_k)=\eps^{-1}\ln\eps\int_{\Gamma_\eps}\kappa\xe y_\eps(x)\,d\cG\\
  +\eps^{-1}\int_{\Gamma_\eps}U\xe y_\eps(x)\,d\cG-
  \int_{\Gamma_\eps}\left(\zeta y_\eps(x)+f(x)\right)\,d\cG.
\end{multline*}
With the asymptotics $y_\eps'(a^\eps_k)=q_k y_\eps(a^\eps_k)\ln\eps+O(1)$ as $\eps\to 0$, the last equality can be rewritten as
\begin{equation}\label{TheLastEql}
  \sum_{k=1}^n q_ky_\eps(a^\eps_k)-\eps^{-1}\int_{\Gamma_\eps}\kappa\xe y_\eps(x)\,d\cG=O(|\ln\eps|^{-1}).
\end{equation}
In view of Propositon~\ref{PropYisCont}, we have
\begin{equation*}
  \eps^{-1}\int_{\Gamma_\eps}\kappa\xe y_\eps(x)\,d\cG\to \sum_{k=1}^n y_k(0)\int_{e_k}\kappa(t)\,dt= y_1(0)\int_{\Gamma}\kappa\,d\Gamma.
\end{equation*}
Passing to the limit as $\eps\to 0$ in \eqref{TheLastEql}, we find
\begin{equation*}
  \left(\sum_{k=1}^n q_k-\int_{\Gamma}\kappa\,d\Gamma\right)y_1(0)=0.
\end{equation*}
Hence, if condition \eqref{QPlusDeltaConvCond} is not fulfilled, then $y_1(0)=0$, and so $y(0)$ is the zero vector, by Proposition~\ref{PropYisCont}. Therefore we have proved that the limit function $y$ solves problem \eqref{ProblemDirectSum}, and finally that the operator $H_0$, if it exists, is the direct sum  $\cD_{e_1}\oplus\cdots\oplus\cD_{e_n}$.

\end{document}